\newtheorem{theorem}{Theorem}[section]
\newtheorem{lemma}[theorem]{Lemma}
\newtheorem{proposition}[theorem]{Proposition}
\newtheorem{corollary}[theorem]{Corollary}
\newtheorem{definition}[theorem]{Definition}
\newtheorem{example}[theorem]{Example}
\newtheorem{remark}[theorem]{Remark}
\newcommand\supp{\mathop{\rm supp}}
\newcommand\id{\mathop{\rm id}}
\newcommand\tr{\mathop{\rm tr}}
\newcommand{\cl}[1]{\mathcal{#1}}
\newcommand{\bb}[1]{\mathbb{#1}}
\begin{document}

\title{Cantor correlations I. Operator systems and Cantor games}

\author[G. Baziotis]{Georgios Baziotis}
\address{
School of Mathematical Sciences, University of Delaware, 501 Ewing Hall,
Newark, DE 19716, USA}
\email{baziotis@udel.edu}

\author[A. Chatzinikolaou]{Alexandros Chatzinikolaou}
\address{
Department of Mathematics, National and Kapodistrian University of Athens, Athens 157 84, Greece}
\email{achatzinik@math.uoa.gr}

\author[I. G. Todorov]{Ivan G. Todorov}
\address{
School of Mathematical Sciences, University of Delaware, 501 Ewing Hall,
Newark, DE 19716, USA}
\email{todorov@udel.edu}

\author[L. Turowska]{Lyudmila Turowska}
\address{Department of Mathematical Sciences, Chalmers University
of Technology and the University of Gothenburg, Gothenburg SE-412 96, Sweden}
\email{turowska@chalmers.se}

\date{3 November 2025}

\begin{abstract}
    We study no-signalling correlations over Cantor spaces, 
    placing the product of infinitely many copies of a finite non-local game
    in a unified general setup. 
    We define the subclasses of local, quantum spatial, approximately quantum and quantum commuting Cantor correlations and describe them in terms of states on tensor products of inductive limits of operator systems. We provide a correspondence between no-signalling (resp.\,approximately quantum, quantum commuting) Cantor correlations and sequences of correlations of the same type over the projections onto increasing number of finitely many coordinates. We introduce Cantor games, and 
    associate canonically such a game to a sequence of finite input/output games, showing that the numerical sequence of the values of the games in the sequence  converges to the corresponding value of the compound Cantor game.
\end{abstract}

\maketitle

\tableofcontents


\section{Introduction}
 
{\it Non-local games} have been at the centre of the fruitful interactions 
between operator algebras and quantum information theory witnessed in the 
past decade (see e.g. \cite{cjpp, lmprsstw, lmr, mror, pszz, pal-vid, psstw}). 
These are games, played cooperatively by players Alice and Bob against a Verifier; 
in a single round of the game, the Verifier draws a pair $(s,t)$ of inputs 
form the cartesian product $S\times T$ of two finite sets according 
to a certain probability distribution, and sends $s$ (resp.\,$t$)
to Alice (resp.\,Bob). The players respond with a pair $(u,v)$ of outputs
from the cartesian product $U\times V$ of two (perhaps different) finite sets; 
the tandem Alice-Bob wins the round if the quadruple $(s,t,u,v)$ satisfies 
a given predicate, known to the players, and interpreted as the 
rules of the game. 
The players are not allowed to communicate during the course of the game, 
but they may agree beforehand on using a specific strategy. 

Several types of strategies thus appear, 
depending on the physical model the players avail of (that is, 
the way of forming the joint physical system of their individual 
systems), leading to a type hierarchy 
of {\it no-signalling correlations} between them, 
expressed through a proper inclusion chain 
\begin{equation}\label{eq_in0}
\cl C_{\rm loc}\subseteq \cl C_{\rm qs}\subseteq \cl C_{\rm qa}
\subseteq \cl C_{\rm qc}\subseteq \cl C_{\rm ns},
\end{equation}
where each of $\cl C_{\rm t}$ is the set of 
correlations between Alice and Bob observed during a repetition of game rounds. 
In particular, the class $\cl C_{\rm qc}$, arising by utilising the commuting operator model, strictly contains the 
the class $\cl C_{\rm qa}$ obtained by using liminal finite dimensional 
entanglement \cite{jnvwy}.
The inequality $\cl C_{\rm qa}\neq \cl C_{\rm qc}$ 
answers in the negative the Tsirelson problem in theoretical physics \cite{tsirelson}
and, simultaneously, thanks to \cite{fri-CEP, jnppsw, oz}, the Connes Embedding Problem in operator algebra theory \cite{connes}. 
At the heart of the equivalence between the aforementioned problems 
lie characterisations of the strategies from the classes $\cl C_{\rm qa}$ and 
$\cl C_{\rm qc}$ via states on, respectively, 
the minimal and maximal tensor products of two 
universal C*-algebras, each associated to one of the players. 

The correspondence between strategies and states on the tensor 
product of these universal C*-algebras is not bijective; in fact, a given strategy 
may arise from multiple such states. This phenomenon lies at the 
core of quantum self-testing \cite{ctt-self, pszz}, and leads to the necessity 
to use more economical, bijective, correspondences between 
strategies and states. This was achieved in
\cite{lmprsstw, pt}, where characterisations of strategies were 
obtained via states on different types of 
tensor products of universal {\it operator systems}, 
as opposed to C*-algebras, associated with the players of the game.

Parallel repetition \cite{csuu,r}, that is, the formation of 
the product \cite{mptw} of a sequence of copies 
of the game, played successively and independently, 
is at the base of defining and studying
the asymptotic value of the game which, among others, is 
instrumental for demonstrating separation between classical and 
quantum models in device-independent cryptography. 
The product of countably many copies of a game 
can naturally be viewed as a single game, 
played over the Cantor spaces arising from the underlying 
finite sets of inputs/outputs. 
In this {\it Cantor setup}, the inputs/outputs 
are elements of those Cantor spaces; 
from an operational point of view, such {\it Cantor games}
can be thought of as non-local games, 
in which the players receive a string of inputs of arbitrary (but equal) length, and are required to respond with 
strings of outputs of the same length. 
In fact, the Cantor setup is substantially more general, as
the induced rules on the successively 
larger (but finite) input/output sets do not need to be 
products of a given common underlying rule, that is, 
the successive rounds captured within the compound 
Cantor game are not necessarily independent or identical. 

The aim of the present paper is the study of strategies
of Cantor games and the one-shot values 
(that is, the optimal 
winning probabilities in a single game round, 
according to the strategy type used) thereof. 
While a general definition and basic properties of 
no-signalling correlations over standard measure spaces
was given in \cite{btt}, the Cantor setup contains the 
crucial distinctive element of {\it inductivity}. 
More precisely, a no-signalling 
strategy $\Gamma$ of a given Cantor game 
$\cl G$ corresponds in a
unique way to a sequence $(\Gamma_n)_{n\in \bb{N}}$, where $\Gamma_n$ is a no-signalling strategy of the 
restriction of $\cl G$ to its first $n$ coordinates. 
We show that the passage from $\Gamma$ to $(\Gamma_n)_{n\in \bb{N}}$ preserves the quantum commuting and 
approximately quantum correlation types, but not necessarily the 
quantum spatial correlation type.
We define a universal operator system for each of the players 
in the Cantor setup, 
and show that the Cantor correlations of different types 
arise from states on different kinds of operator system tensor product \cite{kptt} 
of these universal operator systems. 
As a consequence, the class of quantum commuting Cantor 
correlations is closed in the (natural to employ in our setting \cite{gb, btt}) Arveson BW topology
\cite{arveson-acta}. 
Our main tools are drawn 
from operator algebra theory, including 
ultrapowers (see e.g. \cite{AndoHaagerup} and 
\cite[Section 11.5]{Pi2}), 
operator system theory, 
including their tensor products \cite{kptt}, 
co-products \cite{kavruk-nucl} 
and inductive limits \cite{mt}, and
operator-valued information theory over abstract 
alphabets (see \cite{gb, btt, ffp, kakihara}). 

We apply the Cantor correlation setup to examine 
the behaviour of the values
of Cantor games. 
We focus on the case where the question set is endowed with the uniform probability distribution, which, in the Cantor setting, corresponds to the product of uniform probability measures, and obtain a continuity result for 
the values of inductive sequences of games, 
inscribing the latter fact in a series of results about 
tensor norm expressions of game values (see \cite{cjpp, cltt, pal-vid}).

We have postponed some topics, naturally arising from our results, 
for future work, such as 
descriptions of synchronicity (which is examined in the upcoming article \cite{bctt}), 
and consideration of inputs that are not independent and identically distributed.

In the sequel, we describe the content of the paper in more detail. 
Section \ref{s_finite} is preliminary and 
contains the necessary 
background in operator system theory and the operator-algebraic 
approach to no-signalling correlations in the finite case. 
After reviewing operator-valued information channels 
in Section \ref{s_ovic}, given Cantor spaces arising from 
sequences $X = (X_n)_{n\in \bb{N}}$ and $A = (A_n)_{n\in \bb{N}}$
of increasing sets of underlying finite coordinates, 
we introduce Alice's universal operator system $\cl S_{X,A}$
as a direct limit of an appropriately defined 
inductive sequence, where 
the embeddings take into account the uniform distributions
over the question sets $X_n$, $n\in \bb{N}$. 
We show that the unital completely positive maps from 
$\cl S_{X,A}$ into the C*-algebra $\cl B(H)$ of all bounded operators
on a Hilbert space $H$ are in a canonical 
correspondence with the $\cl B(H)$-valued information channels
indexed by elements of the Cantor space associated with $X$. 

In Section \ref{s_rvs}, we 
define the main Cantor correlation types, namely those of 
local, quantum spatial, approximately quantum, quantum commuting and 
no-signalling ones, establish the inclusion chain (\ref{eq_in0}) in the Cantor setup,
and show that the Cantor no-signalling correlations 
correspond to states on the maximal tensor product $\cl S_{X,A}\otimes_{\max}\cl S_{Y,B}$ (here $\cl S_{X,A}$ and $\cl S_{Y,B}$ are the Alice and Bob universal 
operator systems, respectively). En route, we provide a 
general result about diagonals of successive inductive limits in the operator system category. We establish the no-signalling type preservation 
for the operation $\Gamma\to (\Gamma_n)_{n\in \bb{N}}$. 

The same property for the quantum commuting correlation type 
is provided in Section \ref{s_oct} and relies on an ultraproduct construction. 
It leads to a characterisation of quantum commuting Cantor correlations
via states on the commuting tensor product 
$\cl S_{X,A}\otimes_{\rm c}\cl S_{Y,B}$. 
An analogous characterisation is shown to hold for the 
approximately quantum correlations, this time via states on the minimal 
tensor product $\cl S_{X,A}\otimes_{\min}\cl S_{Y,B}$. 
While the classes of quantum commuting and, by definition, approximately quantum,
Cantor correlations are closed in Arveson's BW topology, we show that 
this is not the case for the quantum spatial correlation type. 

Section \ref{s_Cantorg} is devoted to Cantor games and their values. We show that 
a finite game has the same value (of any type ${\rm t}$ among 
the local, quantum spatial, quantum commuting or no-signalling ones) as the canonical Cantor game
arising from it after embedding the rules in the first coordinate
of the corresponding Cantor spaces. 
By projecting on the first $n$ coordinates, every 
Cantor game $\cl G$ gives rise to a decreasing sequence $(\cl G_n)_{n\in \bb{N}}$ of finite games. We show that the value $\omega_{\rm t} (\cl G)$ of $\cl G$ is the limit of the (decreasing) sequence 
$(\omega_{\rm t} (\cl G_n))_{n\in \bb{N}}$ of finite game values. 
As a consequence, $\omega_{\rm t} (\cl G)$ 
can be obtained as a limit of a 
(decreasing) sequence of norms of increasingly larger game tensors, canonically 
associated with $\cl G$, in the maximal, commuting and minimal tensor 
product of the corresponding universal operator systems. 
Finally, we discuss a class of examples to which our results apply.

\medskip

\noindent 
{\bf Acknowledgements. } 
The second named author was supported by the European Union -- Next Generation EU (Implementation Body: HFRI. Project
name: Noncommutative Analysis: Operator Systems and Nonlocality. HFRI
Project Number: 015825)
The third named author was supported by NSF grants 
2115071 and 2154459. The fourth named author was supported by the Swedish Research Council project grant 2023-04555 and GS Magnusons Fond MF2022-0006. 
The authors are grateful to BIRS for funding and hospitality during a 
\lq\lq Research in Teams'' stay in September 2025, 
during which this work was completed.


\section{Finite no-signalling correlations}
\label{s_finite}


In this section, we provide the necessary background 
on operator systems, completely positive maps, 
no-signalling correlations over finite sets,
and their operator systems characterisation,
for later reference.
We refer the reader to \cite{Pa} for further background and details.

Given a vector *-space $V$ over the complex field, let 
$M_n(V)$ be the vector *-space of all $n$ by $n$ matrices with entries 
in $V$, and $M_n(V)_h$ be the real vector space of all self-adjoint elements of $M_n(V)$. 
An \emph{operator system} is a tuple $(V,(C_n)_{n\in \bb{N}},e)$,
where $V$ is a vector *-space, $C_n$ is a proper cone 
in $M_n(V)_h$, the family $(C_n)_{n\in \bb{N}}$ is consistent 
in that $\alpha^* C_n \alpha\subseteq C_m$ for all scalar 
$n$ by $m$ matrices $\alpha$, and the element $e\in C_1$
is an Archimedean matrix order unit for $(C_n)_{n\in \bb{N}}$. We usually write 
$M_n(V)^+ = C_n$. 
Given operator systems $(V,(C_n)_{n\in \bb{N}},e)$ and 
$(V',(C_n')_{n\in \bb{N}},e')$ and a linear map $\phi$, 
we let $\phi^{(n)} : M_n(V)\to M_n(V')$ be the 
(linear) map, given by $\phi^{(n)}((x_{i,j})_{i,j}) = (\phi(x_{i,j}))_{i,j}$. The map $\phi$ is called \emph{positive} 
if $\phi(C_1)\subseteq C_1'$, and \emph{completely positive}
if $\phi^{(n)}$ is positive for every $n\in \bb{N}$; 
it is called a complete order embedding if it is injective and
$\phi^{(n)}(M_n(V))\cap M_n(V')^+ = \phi^{(n)}(M_n(V)^+)$, and 
a complete order isomorphism if it is a surjective complete order 
embedding. 

If $H$ is a Hilbert space, we denote by $\cl B(H)$ the 
space of all bounded linear operators acting on $H$ and by 
$I_H$ the identity operator on $H$. 
All Hilbert spaces we use will be assumed separable. 
After letting $\cl B(H)^+$ denote the cone of all 
positive operators in $\cl B(H)$ and making the identification 
$M_n(\cl B(H)) = \cl B(H^n)$, we have that 
every unital selfadjoint subspace $\cl S\subseteq \cl B(H)$ 
is an operator system with cones $M_n(\cl S)^+ := M_n(\cl S)\cap M_n(\cl B(H))^+$; we call operator systems 
of the latter type \emph{concrete}.
By virtue of the Choi-Effros Theorem
(see e.g. \cite[Theorem 13.1]{Pa}), every operator system 
is completely order isomorphic to a concrete operator system. 
We note that every operator system is an operator space in a canonical way, 
and write ${\rm CB}(\cl S,\cl T)$ (resp. ${\rm UCP}(\cl S,\cl T)$) for the operator space
(resp. convex set) of completely bounded (resp. unital completely positive) maps 
from $\cl S$ into $\cl T$.

We recall the definitions of the operator system tensor 
products that will be used subsequently, and refer to 
\cite{kptt} for further details. 
Given operator systems $\cl S\subseteq \cl B(H)$ and 
$\cl T\subseteq \cl B(K)$ (where $H$ and $K$ are 
Hilbert spaces), we write 
$\cl S\otimes \cl T$ for their algebraic tensor product; 
all three tensor products that we define have 
the latter as their underlying vector *-space. 
The \emph{minimal tensor product} 
$\cl S\otimes_{\min}\cl T$ is equipped with the matricial cones 
that make the inclusion $\cl S\otimes\cl T\subseteq \cl B(H\otimes K)$
a complete order embedding (we denote by $H\otimes K$ the Hilbertian tensor product).
The \emph{commuting tensor product} $\cl S\otimes_{\rm c}\cl T$
has matricial cones 
defined by letting 
$w\in M_n(\cl S \otimes_{\rm c}\cl T)^+$ if 
$(\phi\cdot\psi)^{(n)}(w)\in \cl B(L)^+$ whenever 
$\phi : \cl S\to \cl B(L)$ and $\psi : \cl T\to \cl B(L)$ are 
completely positive maps with commuting ranges, 
$L$ is a Hilbert space 
and $\phi\cdot\psi : \cl S\otimes\cl T\to \cl B(L)$ is the 
linear map, given by $(\phi\cdot\psi)(u\otimes v) = \phi(u)\psi(v)$
(we say that $\phi$ and $\psi$ form a \emph{commuting pair}).
Finally, the \emph{maximal tensor product} $\cl S\otimes_{\max}\cl T$
has matricial cone structure, generated by the 
elementary tensors of the form 
$S\otimes T$, where $S\in M_n(\cl S)^+$ and $T\in M_m(\cl T)^+$.

For $d\in \bb{N}$, let $[d] = \{1,\dots,d\}$. 
Given operator systems $\cl S_i$, $i\in [d]$, their 
\emph{coproduct} 
\cite{fri, kavruk-nucl} is a pair of the form
    $\left(\cl S_1\oplus_1\cdots \oplus_1 \cl S_d, (\iota_i)_{i=1}^d\right)$, 
where 
$\cl S_1\oplus_1\cdots \oplus_1 \cl S_d$ is an operator system, and 
$\iota_i : \cl S_i \to \cl S_1\oplus_1\cdots \oplus_1 \cl S_d$ is a
unital complete order embedding, such that 
if $\cl R$ is an operator system and $\phi_i : \cl S_i\to \cl R$ is a 
unital completely positive map, $i \in [d]$, then there exists
a unique unital completely positive map 
$\phi : \cl S_1\oplus_1\cdots \oplus_1 \cl S_d\to \cl R$, 
such that $\phi\circ\iota_i = \phi_i$, $i\in [d]$. 
We will write $\dot{\oplus}_{i=1}^d \cl S_i = \cl S_1\oplus_1\cdots \oplus_1 \cl S_d$. 
Given, in addition, operator systems $\cl T_i$, $i\in [d]$, and 
unital completely positive maps $\psi_i : \cl S_i\to \cl T_i$, $i\in [d]$, 
there exists a unique unital completely positive map 
$\dot{\oplus}_{i=1}^d \psi_i : \dot{\oplus}_{i=1}^d \cl S_i\to \dot{\oplus}_{i=1}^d \cl T_i$, 
such that $\left(\dot{\oplus}_{i=1}^d \psi_i\right)(\iota_i(u)) = (\iota_i\circ\psi_i)(u)$, 
$u\in \cl S_i$, $i\in [d]$. 
Indeed, the map $\iota_i\circ\psi_i : \cl S_i\to \dot{\oplus}_{j=1}^d \cl T_j$ is unital and 
completely positive, $i\in [d]$, and the existence of $\dot{\oplus}_{i=1}^d \psi_i$ follows 
from the universal property of the coproduct of the family $\{\cl S_i\}_{i=1}^d$. 

We will require some preliminaries on 
inductive limits of operator systems, which we now include; 
we refer the reader to \cite{mt} for further details. 
Let 
\begin{equation}\label{eq_indlimS}
\cl S_1\stackrel{\phi_1}{\longrightarrow} 
\cl S_2\stackrel{\phi_2}{\longrightarrow} 
\cl S_3\stackrel{\phi_3}{\longrightarrow} \cdots
\end{equation}
be an inductive system in the operator system 
category; this means that $\cl S_k$ is an operator 
system and $\phi_k$ is a unital completely positive map for every $k\in \bb{N}$. 
The inductive limit of (\ref{eq_indlimS}) 
is a pair $(\cl S, (\phi_{k,\infty})_{k\in\bb{N}})$, 
where $\cl S$ is an operator system and 
$\phi_{k,\infty} : \cl S_k\to \cl S$ is a unital completely positive map, $k\in \bb{N}$,
with the property that 
if $\cl R$ is an operator system and 
$\rho_k : \cl S_k\to \cl R$, $k\in \bb{N}$, are unital 
completely positive maps, such that 
$\rho_{k+1}\circ \phi_k = \rho_k$, $k\in \bb{N}$,
then there exists a unique unital completely positive map 
$\rho : \cl S\to \cl R$ such that 
$\rho\circ\phi_{k,\infty} = \rho_k$, $k\in \bb{N}$. 
Such an operator system $\cl S$ is unique up to 
a complete order isomorphism; we write 
$\cl S = \varinjlim \cl S_k$. 

Given a finite set $A$, we let $M_A$ be the algebra of all $|A|\times |A|$ matrices 
with complex entries, and $\cl D_A$ be its subalgebra of all diagonal matrices. 
Given another finite set $X$, we let
$\cl A_{X,A} = \underbrace{\cl D_A\ast\cdots\ast\cl D_A}_{|X| 
\hspace{0.05cm}\mbox{\tiny times}}$
be the free product, amalgamated over the units, and 
\begin{equation}\label{eq_SXAeq}
\cl S_{X,A} = \underbrace{\cl D_A\oplus_1\cdots\oplus_1\cl D_A}_{|X| 
\hspace{0.05cm}\mbox{\tiny times}}.
\end{equation}
We note the unital completely order isomorphic inclusion 
$\cl S_{X,A}\subseteq \cl A_{X,A}$ \cite{lmprsstw, pt}. 
We write $e_{x,a}$, $x\in X$, $a\in A$, for the canonical generators of $\cl S_{X,A}$, that is, 
$e_{x,a} = \iota_x(\delta_a)$, where $\iota_x : \cl D_A\to \cl S_{X,A}$ is the inclusion map of the 
$x$-th copy of $\cl D_A$, and $(\delta_a)_{a\in A}$ is the canonical basis of $\cl D_A$.

Given finite sets $X$, $Y$, $A$ and $B$, 
a \emph{no-signalling correlation} over the quadruple 
$(X,Y,A,B)$ is a family 
$\{(p(a,b|x,y))_{a,b} : x\in X, y\in Y\}$,
where $p(\cdot,\cdot|x,y)$ is a probability 
distribution over $A\times B$ for every $(x,y)\in X\times Y$, 
$$\sum_{b\in B} p(a,b|x,y) = \sum_{b\in B} p(a,b|x,y'), \ \ x\in X, y,y'\in Y, a\in A,$$
and 
$$\sum_{a\in A} p(a,b|x,y) = \sum_{a\in A} p(a,b|x',y), \ \ x,x'\in X,  y\in Y, b\in B.$$
Recall that a \emph{positive operator-valued measure (POVM)} is a (finite) family $(E_i)_{i=1}^d$ of positive operators acting on a 
Hilbert space, such that $\sum_{i=1}^d E_i = I$.
A no-signalling correlation $p$ over $(X,Y,A,B)$ is called 
\emph{quantum commuting} if it has the form
\begin{equation}\label{eq_defqcty}
p\left(a,b|x,y\right) = \langle P_{x,a} Q_{y,b}\xi ,\xi\rangle,
\end{equation}
where
$\xi$ is a unit vector in a Hilbert space $H$, and 
$\left(P_{x,a}\right) _{a\in A}$ and $\left(Q_{y,b}\right) _{b\in B}$, $x\in X$, $y\in Y$, are
POVM's on $H$ such that $P_{x,a} Q_{y,b} = Q_{y,b} P_{x,a}$ for all $x\in X$, $y\in Y$, $a\in A$ and $b\in B$.
The correlation $p$ is called \emph{quantum spatial} 
if the Hilbert space in the representation (\ref{eq_defqcty}) can be chosen 
of the form $H = H_A\otimes H_B$ for some Hilbert spaces $H_A$ and $H_B$, 
and $P_{x,a}$ (resp. $Q_{y,b}$) 
has the form $P_{x,a} = P_{x,a}'\otimes I_{H_B}$ 
(resp. $Q_{y,b} = I_{H_A}\otimes Q_{y,b}'$).
We further say that $p$ is an \emph{approximately quantum} correlation if 
$p$ is the limit (in the vector space $\bb{R}^X\times \bb{R}^Y\times \bb{R}^A\times
\bb{R}^B$) of quantum spatial correlations. 
We denote by $\cl C_{\rm ns}(X,Y,A,B)$ 
(resp. $\cl C_{\rm qc}(X,Y,A,B)$, $\cl C_{\rm qa}(X,Y,A,B)$, 
$\cl C_{\rm qs}(X,Y,A,B)$) the set 
of all no-signalling (resp. quantum commuting, approximately quantum, 
quantum spatial) correlations over the quadruple $(X,Y,A,B)$.
We refer the reader to \cite{lmprsstw} for 
further details regarding no-signalling 
correlations, and record here 
characterisations of
correlation types in terms of operator 
system tensor products 
that will be needed in the sequel
(\cite[Corollary 3.2]{lmprsstw} and \cite[Corollary 3.3]{lmprsstw}).

\begin{theorem}\label{th_orcha}
Let $X$, $Y$, $A$ and $B$ be finite sets.
The no-signalling (resp. quantum commuting, approximately quantum) correlations 
$p = \{(p(a,b|x,y))_{a,b}: x\in X, y\in Y\}$ 
are in bijective correspondence to states
$s : \cl S_{X,A}\otimes_{\max}\cl S_{Y,B}\to \bb{C}$
(resp. $s : \cl S_{X,A}\otimes_{\rm c}\cl S_{Y,B}\to \bb{C}$, 
$s : \cl S_{X,A}\otimes_{\min}\cl S_{Y,B}\to \bb{C}$) via the assignment 
$$p(a,b|x,y) = s(e_{x,a}\otimes e_{y,b}), \ \ \ x\in X, y\in Y, a\in A, b\in B.$$
\end{theorem}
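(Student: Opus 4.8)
The plan is to reduce all three equivalences to a single \emph{POVM dictionary} and then feed it into the characteristic property of each tensor product. First I would record that, by the universal property of the coproduct, a unital completely positive map $\phi:\cl S_{X,A}\to\cl B(H)$ is the same datum as an $|X|$-tuple of unital completely positive maps $\cl D_A\to\cl B(H)$; since $\cl D_A\cong\bb C^{|A|}$ is commutative with minimal projections $\delta_a$ summing to the unit, each such map is automatically completely positive and amounts to a POVM $(\phi(\delta_a))_{a\in A}$. Thus $\phi\mapsto(\phi(e_{x,a}))_{x\in X,a\in A}$ is a bijection from ${\rm UCP}(\cl S_{X,A},\cl B(H))$ onto the families of POVMs $\{(P_{x,a})_a:x\in X\}$ on $H$, and likewise for $\cl S_{Y,B}$. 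I would also note that $\{e_{x,a}\}$ together with the unit span $\cl S_{X,A}$ (the only relations being $\sum_a e_{x,a}=1$), so a functional on $\cl S_{X,A}\otimes\cl S_{Y,B}$ is determined by its values on the $e_{x,a}\otimes e_{y,b}$ and on the unit-bearing tensors, and the latter are pinned down by the no-signalling marginals and unitality. This already yields injectivity of $s\mapsto p$ in all three cases simultaneously, and reduces each statement to producing, for every correlation of the given type, a state of the corresponding kind.

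For the quantum commuting case I would invoke the defining property of $\cl S_{X,A}\otimes_{\rm c}\cl S_{Y,B}$: a state $s$ on it is represented by a commuting pair $\phi:\cl S_{X,A}\to\cl B(H)$, $\psi:\cl S_{Y,B}\to\cl B(H)$ of unital completely positive maps with commuting ranges and a unit vector $\xi$, via $s=\langle(\phi\cdot\psi)(\cdot)\xi,\xi\rangle$ (a GNS/Stinespring argument applied to the commuting cone, as in \cite{kptt, lmprsstw}). Through the dictionary, $\phi$ and $\psi$ become commuting POVM families $P_{x,a}$, $Q_{y,b}$, and $s(e_{x,a}\otimes e_{y,b})=\langle P_{x,a}Q_{y,b}\xi,\xi\rangle$ is exactly the form (\ref{eq_defqcty}); conversely the data of a quantum commuting correlation assembles into such a commuting pair and hence a state. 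This step is essentially definitional once the dictionary is in place.

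For the approximately quantum case I would start from the spatial realization $\cl S_{X,A}\otimes_{\min}\cl S_{Y,B}\subseteq\cl B(H_A\otimes H_B)$: any unital completely positive maps into $\cl B(H_A)$ and $\cl B(H_B)$ together with a unit vector in $H_A\otimes H_B$ give a state whose values on generators are $\langle(P'_{x,a}\otimes I)(I\otimes Q'_{y,b})\xi,\xi\rangle$, i.e.\ precisely the quantum spatial correlations. I would then use that the state space of $\otimes_{\min}$ is weak$^*$-compact and convex and that $s\mapsto p$ is affine and weak$^*$-continuous, so its image is compact convex and contains $\cl C_{\rm qs}$. Extending a general state on $\otimes_{\min}$ to $\cl B(H_A\otimes H_B)$ and approximating it in weak$^*$ by normal states (convex mixtures of vector states, which yield quantum spatial correlations since $\cl C_{\rm qs}$ is closed under convex combinations via direct sums) shows the image equals the closure $\overline{\cl C_{\rm qs}}=\cl C_{\rm qa}$; a convergent subnet of spatial states realizing an approximating sequence supplies surjectivity.

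The main obstacle is the no-signalling/maximal case. The easy direction is that a state $s$ on $\otimes_{\max}$, read off through $e_{x,a}\otimes 1$ and $1\otimes e_{y,b}$, forces consistent marginals and hence a no-signalling correlation. For the converse one starts with a no-signalling $p$, uses no-signalling to see that $s(e_{x,a}\otimes e_{y,b})=p(a,b|x,y)$ extends to a \emph{well-defined} unital functional on $\cl S_{X,A}\otimes\cl S_{Y,B}$, and must then prove that $s$ is positive on the maximal cone, whose generators are the elementary products $S\otimes T$ with $S\in M_n(\cl S_{X,A})^+$ and $T\in M_m(\cl S_{Y,B})^+$. This positivity is the real content. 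I would attack it by duality: a positive functional on each factor is, through the dictionary, a scaled convex combination of POVM evaluations, and pairing $S\otimes T$ against $p$ then reduces to checking nonnegativity of sums of marginal probabilities, which holds precisely because the marginals of $p$ are genuine probability distributions. Equivalently, I would use the concrete block description of positive elements of the coproduct $\cl S_{X,A}$ to evaluate $s$ on the max-cone generators directly. Either way, this verification of max-positivity from the raw no-signalling constraints is the delicate step; the remaining bijectivity follows from the spanning and marginal observations of the first paragraph.
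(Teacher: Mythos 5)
You should first note what the comparison target is: the paper does not prove Theorem \ref{th_orcha} at all --- it quotes it from \cite[Corollaries 3.2 and 3.3]{lmprsstw} --- so your proposal is measured against the standard proof in that reference, which it largely reconstructs. Your POVM dictionary, the injectivity of $s\mapsto p$ via the spanning of $\cl S_{X,A}\otimes\cl S_{Y,B}$ by the $e_{x,a}\otimes e_{y,b}$, the observation that well-definedness of the functional $s_p$ on the algebraic tensor product is \emph{exactly} the no-signalling conditions, the quantum commuting case (the nontrivial direction being supplied by the complete order inclusion $\cl S_{X,A}\otimes_{\rm c}\cl S_{Y,B}\subseteq \cl A_{X,A}\otimes_{\max}\cl A_{Y,B}$ together with a GNS argument, as you indicate), and the minimal/approximately quantum case (vector states on a spatial realisation yield quantum spatial correlations; extending an arbitrary state to $\cl B(H_A\otimes H_B)$, using weak*-density of normal states and closure of $\cl C_{\rm qs}$ under convex combinations via direct sums gives one inclusion, while weak*-compactness of the state space and a convergent subnet gives surjectivity onto $\cl C_{\rm qa}$) are all correct and essentially the cited arguments.

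The genuine gap is in the maximal/no-signalling case, precisely where you flag the delicacy. As written, ``pairing $S\otimes T$ against $p$ reduces to checking nonnegativity of sums of marginal probabilities'' can only certify $s(u\otimes v)\ge 0$ for level-one positives $u,v$, i.e.\ that the associated map is \emph{positive}; but the level-one max cone consists of all compressions $\alpha(S\otimes T)\alpha^*$ with $S\in M_n(\cl S_{X,A})^+$, $T\in M_m(\cl S_{Y,B})^+$ (plus Archimedeanisation), and positivity on elementary tensors of positives is in general strictly weaker than max-positivity. The step that closes this, and which your sketch does not name, is threefold: (1) by the duality for the maximal tensor product in \cite{kptt}, $s$ is a state on $\cl S_{X,A}\otimes_{\max}\cl S_{Y,B}$ if and only if the map $L_s:\cl S_{X,A}\to \cl S_{Y,B}^{\rm d}$, $L_s(u)=s(u\otimes\cdot)$, is \emph{completely} positive into the dual operator system; (2) complete positivity of $L_s$ does reduce to scalar checks, but only because every positive map out of $\cl D_A$ is automatically completely positive (it has the form $\sum_a f_a(\cdot)\,r_a$ with $f_a$ positive functionals and $r_a=L_s(\iota_x(\delta_a))$ positive), and because the coproduct's universal property persists for tuples of completely positive maps sharing a common unit image (an adaptation of the device in Remark \ref{r_forcp}, or Kavruk's quotient description of $\oplus_1$ in \cite{kavruk-nucl}); (3) with (1) and (2) in place, complete positivity of $L_s$ amounts exactly to each functional $f_{x,a}:e_{y,b}\mapsto p(a,b|x,y)$ being positive on $\cl S_{Y,B}$, which via your dictionary is the statement that $p(a,\cdot|x,\cdot)$, normalised by the $y$-independent marginal $\sum_b p(a,b|x,y)$, is a family of probability distributions --- i.e.\ the no-signalling constraints. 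Inserting these three ingredients turns your plan into a complete proof; without (1) and (2) the proposed verification never reaches the matrix levels of the max cone and would fail.
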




\section{The Cantor operator system} \label{s_ovic}

In this section, we review the definitions of operator-valued information channels \cite{btt}, 
specialising to the context of Cantor spaces, introduce 
a class of operator systems that will be used in later sections, and 
describe their universal property.


\subsection{Operator-valued channels} \label{ss_genb}

Let $\frak{S}$ be a second countable compact Hausdorff space. 
We let $\frak{B}_{\frak{S}}$ be the Borel $\sigma$-algebra of $\frak{S}$, 
$C(\frak S)$ be the C*-algebra of all 
continuous complex-valued functions on $\frak S$, and $M(\frak S)$ be the 
space of all Radon measures on $\frak S$. 
 For a (separable) Hilbert space $H$, 
a \emph{quantum probability measure (QPM)} over $\frak{S}$ with values in $\cl{B}(H)$ is a 
map $E : \frak{B}_{\frak{S}}\to \cl B(H)^+$ such that $E(\emptyset) = 0$, $E(\frak{S}) = I$, and 
$E\left(\cup_{i=1}^{\infty} \alpha_i\right) = \sum_{i=1}^{\infty} E(\alpha_i)$
in the strong operator topology whenever 
$(\alpha_i)_{i\in \bb{N}}$ is a sequence of mutually disjoint elements of $\frak{B}_{\frak{S}}$.

Let $\frak X$ be a(nother) second countable compact Hausdorff space, 
equipped with a probability measure $\mu\in M(\frak X)$.
An \emph{operator-valued information channel} from $\frak X$ to $\frak S$ with values in $\cl{B}(H)$ 
is a family $E=E(\cdot|x)_{x\in \frak X}$ of QPM's over $\frak{S}$ such that, for every 
$\alpha\in\frak{B}_{\frak S}$, the function
$x\mapsto E(\alpha|x)$
is weakly $\mu$-measurable, that is, the functions 
$E_{\xi,\eta}(\alpha|\cdot) := \langle E(\alpha|\cdot)\xi,\eta\rangle$
are measurable for all $\xi,\eta\in H$ \cite{btt}. 
We write $\frak{C}( \frak{S},\frak X;H)$ for the set of all operator-valued information channels from $\frak X$ to $\frak S$ with values in $\cl{B}(H)$, 
and view its elements as measurable versions 
of families of POVM's in that the latter are 
operator-valued information channels over a pair of 
finite sets. 
We say that the channels 
$E,E'\in\frak{C}(\frak {S},\frak X;H)$ are $\mu$-equivalent 
(and write $E\sim_\mu E'$) if 
$$E(\alpha|x)=E'(\alpha|x)\ \  \text{$\mu$-almost everywhere},$$ 
for every $\alpha\in\frak{B}_{\frak S}$. We write
$\frak{C}_\mu(\frak S,\frak X;H)$ for the set of all $\sim_\mu$-equivalent classes of $\frak{C}(\frak S,\frak X;H)$. Elements of 
$\frak{C}_\mu(\frak {S},\frak X;H)$ will be called \emph{operator-valued $\mu$-information channels} 
from $\frak X$ to $\frak S$ with values in $\cl{B}(H)$ (see \cite{gb} 
and note that a slightly different terminology was used therein);
without risk of confusion, we will use the same symbol for 
an equivalence class and for a representative thereof.
Let $L_\sigma^\infty(\frak X,\mu,\cl{B}(H))$ be the von Neumman algebra of all equivalent classes of 
weak* measurable essentially bounded functions $F:\frak X\rightarrow\cl{B}(H)$, and note the 
canonical identification 
$L^\infty(\frak X,\mu)\bar{\otimes}\cl{B}(H)=L_\sigma^\infty(\frak X,\mu, \cl B(H))$. 
Here, and below, we use $\bar\otimes$ to denote the von Neumann algebra tensor product. 
For future reference, we recall the correspondence 
between $\mu$-information channels with values in $\cl B(H)$ and 
unital completely positive maps 
from $C(\frak S)$ into $L_\sigma^\infty(\frak X,\mu,\cl{B}(H))$ established in 
\cite[Theorem 3.11]{gb}.

\begin{theorem}\label{mod_mu}
If $E\in\frak{C}_\mu(\frak S,\frak X;H)$ then 
there exists a unital completely positive map 
$\Phi_{E} : C(\frak S)\rightarrow L_\sigma^\infty(\frak X,\mu,\cl{B}(H))$ such that     \begin{align}\label{eq_ovic}
    \langle\Phi_{E}(f)(x)\xi,\eta\rangle=\int_{\frak S}f(a)dE_{\xi,\eta}(a|x)\hspace{0.4cm} \mu\text{-a.e.}, \ \  f\in C(\frak S),
    \xi,\eta\in H.
    \end{align}
Conversely, if 
    $\Phi:C(\frak S)\rightarrow L_\sigma^\infty(\frak X,\mu,\cl{B}(H))$ 
    is a unital completely positive map then there exists 
    a (unique up to $\sim_{\mu}$-equivalence) channel
    $E\in\frak{C}_\mu(\frak S,\frak X;H)$ such that $\Phi = \Phi_{E}$.
\end{theorem}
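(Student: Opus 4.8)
The plan is to read Theorem \ref{mod_mu} as a parametrised, operator-valued version of the Riesz representation theorem, and to establish the two implications separately, followed by the uniqueness clause. Throughout I would lean on the standard operator-valued Riesz representation theorem (see \cite{Pa}): for a unital completely positive map $\psi:C(\frak S)\to\cl B(H)$ there is a unique regular QPM $E$ over $\frak S$ with $\langle\psi(f)\xi,\eta\rangle=\int_{\frak S}f\,dE_{\xi,\eta}$ for all $f,\xi,\eta$, and conversely each such QPM yields a ucp map this way.

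For the forward direction, I would fix $x\in\frak X$ and let $\Phi_x:C(\frak S)\to\cl B(H)$ be the ucp map attached to the QPM $E(\cdot|x)$ by the operator-valued Riesz theorem, so that $\langle\Phi_x(f)\xi,\eta\rangle=\int_{\frak S}f\,dE_{\xi,\eta}(\cdot|x)$. Two points then remain. First, that $x\mapsto\Phi_x(f)$ defines an element of $L_\sigma^\infty(\frak X,\mu,\cl B(H))$: weak* measurability amounts to measurability of $x\mapsto\langle\Phi_x(f)\xi,\eta\rangle$ for all $\xi,\eta\in H$, which holds for simple $f$ directly from the weak $\mu$-measurability of $x\mapsto E_{\xi,\eta}(\alpha|x)$ built into the definition of a channel, and passes to all $f\in C(\frak S)$ by uniform approximation; essential boundedness follows from $\nrm{\Phi_x(f)}\le\nrm{f}$. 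Second, complete positivity of $\Phi_E$ reduces, via the identification $M_n(L_\sigma^\infty(\frak X,\mu,\cl B(H)))=L_\sigma^\infty(\frak X,\mu,M_n(\cl B(H)))$ and the fact that positivity of an essentially bounded weak*-measurable function is equivalent to a.e.\ pointwise positivity of its values, to the complete positivity of each individual $\Phi_x$.

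The converse is the delicate direction and the crux of the argument. Given a ucp map $\Phi$, I cannot evaluate $\Phi(f)$ pointwise, since $\Phi(f)$ is only an equivalence class and $C(\frak S)$ is uncountable. The key structural input is that $\frak S$ is second countable and compact Hausdorff, hence metrizable, so $C(\frak S)$ is separable; likewise $H$ is separable. I would fix a countable, $\ast$-invariant, $(\bb{Q}+i\bb{Q})$-linear dense subspace $\cl F_0\subseteq C(\frak S)$, choose a genuine representative of $\Phi(f)$ for each $f\in\cl F_0$, and intersect the countably many co-null sets on which the relations expressing linearity, unitality, positivity on $\cl F_0$, and the contractive bound hold, obtaining a co-null set $\frak X_0$. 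For $x\in\frak X_0$ the assignment $f\mapsto\Phi(f)(x)$ is ucp on $\cl F_0$ and extends by continuity to a ucp map $\Phi_x:C(\frak S)\to\cl B(H)$ (set to a fixed ucp map off $\frak X_0$); applying the operator-valued Riesz theorem to each $\Phi_x$ yields QPMs $E(\cdot|x)$. To see that $E$ is a genuine channel, measurability of $x\mapsto\langle\Phi_x(f)\xi,\eta\rangle$ holds for $f\in\cl F_0$ by construction and extends to all $f\in C(\frak S)$ by uniform limits; to pass from continuous test functions to indicators of Borel sets I would run a monotone class argument, writing $\mathbbm 1_\alpha$ for open $\alpha$ as an increasing pointwise limit of continuous functions (using metrizability), so that $x\mapsto E_{\xi,\eta}(\alpha|x)$ is measurable, and noting that the family of $\alpha$ for which this holds is a $\sigma$-algebra containing the open sets. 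The identity $\Phi=\Phi_E$ is then immediate from the defining relation of the $\Phi_x$. For uniqueness up to $\sim_\mu$, if $E,E'$ both induce $\Phi$ then $\int f\,dE_{\xi,\eta}(\cdot|x)=\int f\,dE'_{\xi,\eta}(\cdot|x)$ a.e.\ for each $f$; separability of $C(\frak S)$ and of $H$ collapses these to a single co-null set on which scalar Riesz uniqueness forces $E(\cdot|x)=E'(\cdot|x)$ as $\cl B(H)$-valued measures, whence $E\sim_\mu E'$.

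The main obstacle I anticipate is exactly this measure-theoretic bookkeeping in the converse: reconciling the almost-everywhere, class-valued nature of $\Phi$ with the need for an everywhere-defined, weakly $\mu$-measurable family of QPMs. Separability of $C(\frak S)$ and $H$ is what makes it tractable, by reducing uncountably many null-set exceptions to one, and the monotone class step upgrading measurability from continuous functions to Borel sets is the second point requiring care.
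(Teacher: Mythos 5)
There is nothing internal to compare your argument against: the paper does not prove Theorem \ref{mod_mu} at all, but recalls it as an imported result, citing \cite[Theorem 3.11]{gb}. Judged on its own merits, your proof is correct and follows what is essentially the expected route: the operator-valued Riesz representation theorem applied fibrewise, with separability of $C(\frak S)$ (via metrizability of $\frak S$) and of $H$ used to collapse the uncountably many exceptional null sets in the converse direction to a single co-null set. Your identification of the converse as the delicate direction, and of the class-valued versus pointwise tension as its crux, is exactly right.

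Three refinements are worth making. First, in the measurability upgrade from continuous functions to Borel sets, the collection of $\alpha\in\frak{B}_{\frak S}$ for which $x\mapsto E_{\xi,\eta}(\alpha|x)$ is measurable is closed under complements and countable disjoint unions but not obviously under finite intersections, so it is a $\lambda$-system rather than a $\sigma$-algebra as you assert; the correct invocation is the $\pi$--$\lambda$ theorem with the open sets as the generating $\pi$-system (your monotone-limit argument for open $\alpha$ is the right input). Second, in the forward direction you never verify that $\Phi_E$ is well defined on the $\sim_\mu$-equivalence class of $E$: if $E\sim_\mu E'$, the exceptional null set depends on $\alpha$, and there are uncountably many Borel sets. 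The fix is the same separability device you deploy for uniqueness --- $\frak{B}_{\frak S}$ is countably generated since $\frak S$ is second countable, so a single co-null set suffices, and measure uniqueness (again $\pi$--$\lambda$) then forces $E_{\xi,\eta}(\cdot|x)=E'_{\xi,\eta}(\cdot|x)$ off that set. Third, two points you pass over quickly deserve a line each: the extension of $f\mapsto\Phi(f)(x)$ from $\cl F_0$ to $C(\frak S)$ is positive because a unital contraction into $\cl B(H)$ is positive (and positivity suffices for complete positivity since $C(\frak S)$ is commutative), and your appeal to scalar Riesz uniqueness in the last step needs the measures $E_{\xi,\eta}(\cdot|x)$ to be regular, which holds automatically here because finite Borel measures on a compact metrizable space are Radon. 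None of these is a gap in the plan; they are the standard patches the argument requires.
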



Recall that, if $\cl X$ and $\cl Y$ are Banach spaces, 
the \emph{BW topology} \cite{arveson-acta} 
on the bounded subsets of the space
$\cl B(\cl X,\cl Y^*)$ of all bounded liner maps 
from $\cl X$ into $\cl Y^*$ is defined as the restriction of the 
point-weak* topology. The set $\frak{C}_\mu(\frak S,\frak X;H)$ will be 
hereafter equipped with the topology (which we continue to refer to 
as the BW topology) according to which
a net $(E^\lambda)_{\lambda\in\Lambda}\subseteq\frak{C}_\mu(\frak S,\frak X;H)$  converges to an element $E\in\frak{C}_\mu(\frak S,\frak X;H)$ if 
$\Phi_{E^\lambda}$ converges to $\Phi_{E}$ in the BW topology (see \cite{gb}).
We note that, by \cite[Theorem 3.14]{gb}, the space 
$(\frak{C}_\mu(\frak S,\frak X;H),\rm BW)$ is compact. 
Since the operator projective tensor product 
$C(\frak S)\hat{\otimes} L^1(\frak X,\mu)\hat\otimes\cl{T}(H)$ is separable,
the space $(\frak{C}_\mu(\frak S,\frak X;H),\rm BW)$ is metrisable
(see e.g. \cite[Theorem V.5.1]{dun_sch}).


\subsection{Inductive channel families}\label{ss_ind_fam}

If $X_1$ and $X_2$ are finite sets, 
we write $X_1|X_2$ if there exists $d\in \bb{N}$ such that 
$X_2 = X_1 \times [d]$. 
Assuming that $X_2 = X_1\times [d]$, 
let $\iota_{X_1,X_2} : \cl D_{X_1}\to \cl D_{X_2}$ be the 
unital *-monomorphism, given by $\iota_{X_1,X_2}(T) = T\otimes I_d$, after the 
canonical identification $\cl D_{X_2} = \cl D_{X_1}\otimes \cl D_{[d]}$. 

A family $X = (X_n)_{n\in \bb{N}}$ of finite sets will be called 
\emph{inductive} if $X_n |X_{n+1}$ for every $n\in \bb{N}$. 
The inductive limit of the sequence 
$$\cl D_{X_1}\stackrel{\iota_{X_1,X_2}}{\longrightarrow} 
\cl D_{X_2}\stackrel{\iota_{X_2,X_3}}{\longrightarrow} \cdots
\stackrel{\iota_{X_{n-1},X_{n}}}{\longrightarrow}
\cl D_{X_n}\stackrel{\iota_{X_n,X_{n+1}}}{\longrightarrow}\cdots
$$
in the category of C*-algebras will be denoted by $\cl D_X$. 
Assuming that $X_{n+1} = X_n \times [d^X_n]$, where $d^X_n\in \bb{N}$, $n\in \bb{N}$, 
we note that $\cl D_X$ is *-isomorphic to the C*-algebra $C(\Omega_X)$
of all continuous functions on the Cantor space 
$\Omega_X = \prod_{n=0}^{\infty}\left[ d^X_n\right]$
(where we have set $d_0^X = |X_1|$); 
equivalently, $\cl D_X = \otimes_{n=0}^{\infty} \cl D_{[d^X_n]}$
as an infinite C*-algebraic tensor product.

All tracial algebras will be equipped with normalised traces, and 
dualities will always be with respect to the latter.
For a finite set $X_0$, the
(normalised) trace on $\cl D_{X_0}$ will be denoted by ${\rm tr}_{X_0}$.
We note that, if $X_1|X_2$ then the embedding $\iota_{X_1,X_2}$ is trace-preserving.
For an inductive family $X = (X_n)_{n\in \bb{N}}$, 
we set $\tau_X = \otimes_{n=0}^{\infty} {\rm tr}_{[d^X_n]}$. 
We note that 
$\tau_X|_{\cl D_{X_n}} = {\rm tr}_{X_n}$, $n\in \bb{N}$. 
In the sequel, we write $L^1(\Omega_X)$ for $L^1(\Omega_X,\mu_X)$ and $\mu_{X_n}$ for the uniform probability measure on $X_n$, $n\in \bb{N}$. 
We note that
$L^1(\Omega_X)$ coincides with the 
$L^1$-space $L^1(\cl D_X)$ of the C*-algebra 
$\cl D_X$ with respect to (the trace) $\tau_X$.

 The unital (completely) positive trace-preserving maps 
 $\iota_{X_n,X_{n+1}} : \cl D_{X_n} \to \cl D_{X_{n+1}}$ give rise to the canonical conditional expectations 
 $\cl{E}_{X_{n+1},X_n} : \cl{D}_{X_{n+1}} \to \cl{D}_{X_n}$; we note that $\cl{E}_{X_{n+1},X_n} = \id_{X_n} \otimes \tr_{d_n}$. 
As $\cl D_X \subseteq L^\infty(\Omega_X) \subseteq L^1(\Omega_X)$, 
the canonical map 
$\iota_X^{(n)} : \cl D_{X_n} \to \cl D_X$ induces a weak* continuous, unital *-monomorphism
$\iota_{X,\infty}^{(n)} : \cl D_{X_n} \to L^\infty(\Omega_X)$, as well as a 
unital isometry 
$\iota_{X,1}^{(n)} : \cl D_{X_n} \to L^1(\Omega_X)$ with respect to the trace norm. 
The map $\iota_{X,\infty}^{(n)}$ admits a predual map $\cl E_{X_n} : L^1(\Omega_X) \to \cl D_{X_n}$, which is faithful, trace-preserving, and satisfies the identity 
$\cl E_{X_n} \circ \iota_{X,1}^{(n)} = \id_{\cl D_{X_n}}$. 
Thus, when identifying $\cl D_{X_n}$ as a subspace of $L^1(\Omega_X)$ via $\iota_{X,1}^{(n)}$, the map $\cl E_{X_n}$ is the canonical conditional expectation.
At the von Neumann algebra level, there exists a faithful, trace-preserving, weak* continuous conditional expectation $\cl E_{X_n}^\infty : L^\infty(\Omega_X) \to \cl D_{X_n}$, which is the dual of the isometry $\iota_{X,1}^{(n)}$. 
If $H$ is a Hilbert space, 
we further write $\tilde{\cl E}_{X_n}^\infty := \cl E_{X_n}^\infty \otimes 
\id_{\cl B(H)}$ for the conditional expectation
\[
\tilde{\cl E}_{X_n}^\infty : L^\infty(\Omega_X) \bar{\otimes} \cl B(H) \to \cl D_{X_n} 
\otimes \cl B(H),
\]
$\tilde{\cl E}_{X_{n+1},X_n}^\infty := \cl E_{X_{n+1},X_n}^\infty \otimes 
\id_{\cl B(H)}$ for the conditional expectation
\[
\tilde{\cl E}_{X_{n+1},X_n}^\infty : \cl D_{X_{n+1}} \otimes \cl B(H) \to \cl D_{X_n} \otimes \cl B(H), \ \ \ 
n\in \bb{N},
\]
and 
$\tilde{\iota}_{X,\infty}^{(n)} : \cl D_{X_n} \otimes \cl B(H) \to L^\infty(\Omega_X)\bar{\otimes} \cl B(H)$
and 
$\tilde{\iota}_{X_n,X_{n+1}} : \cl D_{X_n} \otimes \cl B(H)\to \cl D_{X_{n+1}} \otimes \cl B(H)$
for the maps, given by $\tilde{\iota}_{X,\infty}^{(n)} = \iota_{X,\infty}^{(n)}\otimes {\rm id}_{\cl B(H)}$, 
and 
$\tilde{\iota}_{X_n,X_{n+1}} = \iota_{X_n,X_{n+1}}\otimes {\rm id}_{\cl B(H)}$, $n\in \bb{N}$. 
We note that the 
superscript/subscript $\infty$ is used to indicate that the 
domain or the range of the corresponding map
is an $L^{\infty}$-space.

We say that a family $ (\Phi_n)_{n\in \bb N}$, where
$\Phi_n : \cl D_{A_n} \to \cl D_{X_n}\otimes \cl B(H)$ is a 
unital completely positive map, $n\in \bb{N}$,
is \textit{inductive} if 
\begin{equation}\label{eq_inre}
\Phi_n = \tilde{\cl{E}}_{X_{n+1},X_n}\circ \Phi_{n+1}\circ 
\iota_{A_n,A_{n+1}}, \ \ \ n\in \bb N,
\end{equation} 
that is, if the diagram
\begin{center}
          \begin{tikzcd}[row sep=large, column sep=large] 
             \cl D_{A_n}  \arrow[d, "\Phi_n"]    \ar[r,  "\iota_{A_{n},A_{n+1}}"]  &  \cl D_{A_{n+1}}  \arrow[d, "\Phi_{n+1}"]\\
             \cl D_{X_n} \otimes \cl B(H)   &  \cl D_{X_{n+1}} \otimes \cl B(H) 
             \ar[l, "\tilde{\cl{E}}_{X_{n+1},X_n}"] 
              \end{tikzcd}
                \end{center}
is commutative for each $ n\in \bb N$.

\begin{theorem} \label{th_indcorresp}
Let $H$ be a Hilbert space, and $X = (X_n)_{n\in \bb{N}}$ and  
$A = (A_n)_{n\in \bb{N}}$  be inductive families of sets. 
\begin{enumerate} 
    \item 
If
 $\Phi : C(\Omega_A)\rightarrow L^{\infty}(\Omega_X)\bar{\otimes}\cl B(H)$ 
is a unital completely positive map and 
\begin{equation}\label{eq_gammanst}
\Phi_n := \tilde{\cl{E}}_{X_n}^{\infty}\circ\Phi\circ \iota_A^{(n)}, \ \ \ 
n \in \bb N,
\end{equation}
then the family $(\Phi_n)_{n\in \bb N}$ is 
inductive. 

\item 
If $\Phi_n : \cl D_{A_n} \to \cl D_{X_n} \otimes\cl B(H)$
is a unital completely positive map, $n\in \bb{N}$, 
such that the family 
$(\Phi_n)_{n\in \bb N}$ is inductive then there exists 
a unique unital completely positive map 
$\Phi : C(\Omega_A)\rightarrow L^{\infty}(\Omega_X) \bar{\otimes}\cl B(H)$ satisfying
(\ref{eq_gammanst}). 
\end{enumerate}
\end{theorem}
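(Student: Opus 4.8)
The plan is to treat the two directions separately, with part (1) being a short computation and part (2) carrying the real content via a martingale reconstruction inside the von Neumann algebra $\cl N := L^\infty(\Omega_X)\bar{\otimes}\cl B(H)$. For part (1), I would verify the inductivity relation (\ref{eq_inre}) directly from the definition (\ref{eq_gammanst}), using only two elementary compatibilities: the consistency of the canonical inclusions, $\iota_A^{(n+1)}\circ\iota_{A_n,A_{n+1}} = \iota_A^{(n)}$, and the tower property of the conditional expectations, $\tilde{\cl E}_{X_{n+1},X_n}\circ\tilde{\cl E}_{X_{n+1}}^\infty = \tilde{\cl E}_{X_n}^\infty$ (obtained by tensoring the scalar identity with $\id_{\cl B(H)}$). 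Substituting $\Phi_{n+1} = \tilde{\cl E}_{X_{n+1}}^\infty\circ\Phi\circ\iota_A^{(n+1)}$ into $\tilde{\cl E}_{X_{n+1},X_n}\circ\Phi_{n+1}\circ\iota_{A_n,A_{n+1}}$ and applying these two identities collapses the expression to $\tilde{\cl E}_{X_n}^\infty\circ\Phi\circ\iota_A^{(n)} = \Phi_n$, which is all of (1).

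For part (2), the idea is to reconstruct $\Phi$ from the finite data $(\Phi_n)$ as a martingale limit. Writing $\iota_{A_n,A_m} := \iota_{A_{m-1},A_m}\circ\cdots\circ\iota_{A_n,A_{n+1}}$ (with $\iota_{A_n,A_n} = \id$), fixing $g\in\cl D_{A_n}$, and setting
\[
w_m := \tilde{\iota}_{X,\infty}^{(m)}\big(\Phi_m(\iota_{A_n,A_m}(g))\big)\in\cl N,\qquad m\ge n,
\]
I would first record the compatibility $\tilde{\cl E}_{X_m}^\infty\circ\tilde{\iota}_{X,\infty}^{(m+1)} = \tilde{\cl E}_{X_{m+1},X_m}$ and combine it with (\ref{eq_inre}) to show that $(w_m)_{m\ge n}$ is a bounded martingale, $\|w_m\|\le\|g\|$, for the conditional expectations. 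Iterating (\ref{eq_inre}) downwards then yields the key stabilisation $\tilde{\cl E}_{X_k,X_n}\big(\Phi_m(\iota_{A_n,A_m}g)\big)=\Phi_k(\iota_{A_n,A_k}g)$ for $m\ge k\ge n$, where $\tilde{\cl E}_{X_k,X_n}$ denotes the iterated conditional expectation. I would then define $\Phi(\iota_A^{(n)}(g))$ to be the weak*-limit $w_\infty$ of $(w_m)$ and extend it by linearity and norm-continuity from the dense $*$-subalgebra $\bigcup_n\iota_A^{(n)}(\cl D_{A_n})$ to all of $C(\Omega_A)$.

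The crux, and the step I expect to be the main obstacle, is the convergence of $(w_m)$ together with uniqueness of its limit, i.e.\ the assertion that an element of $\cl N$ is determined by, and recoverable from, its conditional expectations $(\tilde{\cl E}_{X_k}^\infty(w))_k$. Since $\cl B(H)$ carries no finite trace, I would avoid any noncommutative $L^2$-martingale theory and argue in the predual: because $\cl E_{X_k}^\infty = (\iota_{X,1}^{(k)})^*$ and the cylinder functions $\bigcup_k\iota_{X,1}^{(k)}(\cl D_{X_k})$ are norm-dense in $L^1(\Omega_X)$, the conditions $\tilde{\cl E}_{X_k}^\infty(w)=0$ for all $k$ force $w$ to annihilate a dense subset of the predual $L^1(\Omega_X)\hat{\otimes}\cl T(H)$, hence $w=0$. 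This injectivity shows that any weak*-cluster point $w_\infty$ of $(w_m)$ — which exists by weak*-compactness of bounded sets and satisfies $\tilde{\cl E}_{X_k}^\infty(w_\infty)=\Phi_k(\iota_{A_n,A_k}g)$ for all $k$, by the stabilisation above and weak*-continuity of $\tilde{\cl E}_{X_k}^\infty$ — is unique, so the net converges. Consistency of the definition across levels is automatic, since replacing $g$ by $\iota_{A_n,A_{n+1}}(g)$ leaves the tail $m\to\infty$ unchanged, and unitality and complete positivity pass to $w_\infty$ because $g\mapsto w_\infty$ is a pointwise weak*-limit of the unital completely positive maps $\tilde{\iota}_{X,\infty}^{(m)}\circ\Phi_m\circ\iota_{A_n,A_m}$.

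Finally, I would verify (\ref{eq_gammanst}) and uniqueness of $\Phi$. Applying the weak*-continuous map $\tilde{\cl E}_{X_n}^\infty$ to $w_\infty=\Phi(\iota_A^{(n)}(g))$ and invoking the stabilisation identity (with $k=n$) gives $\tilde{\cl E}_{X_n}^\infty(\Phi(\iota_A^{(n)}(g)))=\Phi_n(g)$, which is precisely (\ref{eq_gammanst}). For uniqueness, if $\Phi'$ is another unital completely positive map satisfying (\ref{eq_gammanst}), then for $g\in\cl D_{A_n}$ and each $m\ge n$, writing $\iota_A^{(n)}(g)=\iota_A^{(m)}(\iota_{A_n,A_m}g)$ and applying (\ref{eq_gammanst}) at level $m$ gives $\tilde{\cl E}_{X_m}^\infty(\Phi'(\iota_A^{(n)}g))=\Phi_m(\iota_{A_n,A_m}g)=\tilde{\cl E}_{X_m}^\infty(\Phi(\iota_A^{(n)}g))$; the determination property from the previous paragraph then forces $\Phi'(\iota_A^{(n)}g)=\Phi(\iota_A^{(n)}g)$, and density of $\bigcup_n\iota_A^{(n)}(\cl D_{A_n})$ together with norm-continuity yields $\Phi'=\Phi$.
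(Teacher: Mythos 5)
Your proof is correct, and in part (2) your approximants are in fact the same objects the paper uses: since $\cl E_{A_m}^{\infty}(\iota_A^{(n)}(g)) = \iota_{A_n,A_m}(g)$ for $m\ge n$, your $w_m$ equals $\Psi_m(\iota_A^{(n)}(g))$, where $\Psi_m = \tilde{\iota}_{X,\infty}^{(m)}\circ\Phi_m\circ\cl E_{A_m}^{\infty}|_{C(\Omega_A)}$ are the lifted maps of the paper's proof. The difference is the limiting mechanism. The paper invokes the BW compactness of ${\rm UCP}(C(\Omega_A),L^{\infty}(\Omega_X)\bar{\otimes}\cl B(H))$ (\cite[Theorem 3.14]{gb}) to extract a single global BW cluster point $\Phi$ of the sequence $(\Psi_m)$, verifies (\ref{eq_gammanst}) via the computation $\tilde{\cl E}_{X_n}^{\infty}\circ\Psi_k\circ\iota_{A}^{(n)} = \Phi_n$ for $k>n$, and proves uniqueness by the density argument; you instead work pointwise on cylinder elements, using weak*-compactness of bounded sets in $L^{\infty}(\Omega_X)\bar{\otimes}\cl B(H)$ together with your determination lemma (an element of the von Neumann algebra is recovered from its conditional expectations $(\tilde{\cl E}_{X_k}^{\infty}(w))_{k}$, by predual density of cylinder tensors in $L^1(\Omega_X)\hat{\otimes}\cl T(H)$) to get uniqueness of cluster points, hence genuine weak* convergence of $(w_m)$, and then extend by density from $\bigcup_n \iota_A^{(n)}(\cl D_{A_n})$. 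Your route is more self-contained — it avoids the imported compactness theorem — and yields slightly more, namely actual pointwise convergence of the approximants rather than a mere cluster point, at the cost of the extension step; the determination lemma you isolate is exactly the density argument the paper uses only for uniqueness, deployed twice. Two small slips to fix: in your stabilisation identity the iterated conditional expectation should be $\tilde{\cl E}_{X_m,X_k}$ (mapping $\cl D_{X_m}\otimes\cl B(H)$ to $\cl D_{X_k}\otimes\cl B(H)$), not $\tilde{\cl E}_{X_k,X_n}$, for the two sides to live in the same space; and for levels $k<n$ you should note that $\tilde{\cl E}_{X_k}^{\infty}(w_\infty) = \tilde{\cl E}_{X_n,X_k}(\Phi_n(g))$ is pinned down via the tower property, so that any two cluster points indeed have all conditional expectations equal before you apply the determination lemma.
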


\begin{proof}
(i) 
Since 
\begin{align*}
\tilde{\cl{E}}_{X_n}^{\infty} = \tilde{\cl{E}}_{X_{n+1},X_n}\circ \tilde{\cl{E}}_{X_{n+1}}^{\infty} \ \ \ \text{ and } \ \ \ 
\iota_{A}^{(n)} = \iota_{A}^{(n+1)} \circ \iota_{A_n,A_{n+1}},
\end{align*} 
condition (\ref{eq_inre}) is implied by 
(\ref{eq_gammanst}).

\par (ii) For each $n \in \bb N$, let  
$\Psi_{n} : C(\Omega_{A}) \to L^{\infty}(\Omega_{X}) \bar{\otimes}\cl B(H)$
be defined by setting $\Psi_{n} := \tilde{\iota}^{(n)}_{X,\infty}\circ \Phi_{n} 
\circ \cl E_{A_{n}}^{\infty}|_{C(\Omega_A)}$; 
note that the maps $ \Psi_{n}$ are unital and completely positive.
The  
sequence $(\Psi_n)_{n\in \bb{N}}$ has a BW cluster point 
$\Phi : C(\Omega_{A}) \to L^{\infty}(\Omega_{X}) \bar{\otimes}\cl B(H)$,
whose existence follows from the 
compactness of 
${\rm UCP}(C(\Omega_A),L^\infty(\Omega_X)\bar{\otimes}\cl B(H))$ in the BW topology
(\cite[Theorem 3.14]{gb}).
Next we show that (\ref{eq_gammanst}) is satisfied 
for the map $\Phi$.
Consider $k > n$ and note that 
\begin{align*}
    {\iota}^{(n)}_{A} =  {\iota}^{(k)}_{A} \circ  {\iota}_{A_{n},A_{k}}, \ \ \  \text{ and } \ \ \ 
    \tilde{\cl E}_{X_{n}}^{\infty} =  \tilde{\cl E}_{X_{k},X_{n}}^{}  \circ \tilde{\cl E}_{X_{k}}^{\infty} 
    \end{align*}
so that 
$$\tilde{\cl E}_{X_{n}}^{\infty} \circ \tilde{\iota}^{(k)}_{X,\infty}  
= \tilde{\cl E}_{X_{k},X_{n}} \ \ \ \text{ and } \ \ \  
{\cl E}_{A_{k}}^{\infty}\circ {\iota}^{(n)}_{A,\infty}  = {\iota}_{A_{n},A_{k}}$$
(here we have set $\tilde{\cl E}_{X_{k},X_{n}} = \cl E_{X_{k},X_{n}} \otimes \id_{\cl B(H)}$).
Therefore, for $ k > n$,  we have 
$$\tilde{\cl E}_{X_{n}}^{\infty} \hspace{-0.04cm} \circ \hspace{-0.04cm}
\Psi_{k} \hspace{-0.04cm} \circ \hspace{-0.04cm} \iota^{(n)}_{A,\infty}  
= 
\tilde{\cl E}_{X_{n}}^{\infty}  \hspace{-0.04cm} \circ \hspace{-0.04cm} \tilde{\iota}^{(k)}_{X,\infty}
\hspace{-0.04cm} \circ \hspace{-0.04cm} \Phi_{k} 
\hspace{-0.04cm} \circ \hspace{-0.04cm} \cl E_{A_{k}}^{\infty} 
\hspace{-0.04cm} \circ \hspace{-0.04cm} \iota^{(n)}_{A,\infty} 
=  \tilde{\cl E}_{X_{k},X_{n}}^{} \hspace{-0.04cm} \circ \hspace{-0.04cm} \Phi_{k} 
\hspace{-0.04cm} \circ \hspace{-0.04cm} \iota_{A_{n},A_{k}} 
= \Phi_{n}, $$
where the last equation follows from the inductivity relations (\ref{eq_inre}). 
Letting $k\to\infty$, we obtain (\ref{eq_gammanst}). 

We claim that the unital completely positive map $\Phi$ satisfying 
(\ref{eq_gammanst}) is unique. 
Indeed, if $\Phi'$ is another such 
map satisfying (\ref{eq_gammanst}), then, as $\tilde{\cl E}_{X_{m}}^{\infty}  \circ \Phi(S)=\tilde{\cl E}_{X_{m}}^{\infty}  \circ \Phi'(S)$, for every $S\in \iota_A^{(n)}(\cl D_{A_n})$, $m>n$, we get, using density arguments, that $\Phi(S)=\Phi'(S)$, giving the uniqueness. 
\end{proof}


\subsection{Definition and universal property}

We next identify a canonical operator system, associated with an inductive family of sets,
which will serve as universal 
encoding object for each of the players of a non-local game over Cantor spaces. 
For $d\in \bb{N}$, recall the operator system 
$\cl S_{[d], A_n}$
defined, in (\ref{eq_SXAeq}), as a coproduct of 
$d$ copies of $\cl D_{A_n}$. Denoting by $\iota_k$ the 
embedding of $\cl D_{A_n}$ in the $k$-th term of 
$\cl S_{[d],A_n}$, let 
$\beta_{d,A_n} : \cl D_{A_n}\to \cl S_{[d], A_n}$ be the unital completely positive map, 
given by 
$$\beta_{d,A_n}(u) = \frac{1}{d}\left(\iota_1(u) + \cdots + 
\iota_d(u)\right), \ \ \ u\in \cl D_{A_n}.$$
We have that the maps 
$$\dot{\oplus}_{i=1}^{|X_n|} \iota_{A_n,A_{n+1}} : \cl S_{X_n,A_n}\to \cl S_{X_n,A_{n+1}}$$
and 
$$\dot{\oplus}_{i=1}^{|X_n|} \beta_{d_n^X,A_{n+1}} : \cl S_{X_n,A_{n+1}}\to \cl S_{X_{n+1},A_{n+1}}$$
are unital and completely positive; 
thus, the composition 
$$\gamma_{X_n,A_n} = \left(\dot{\oplus}_{i=1}^{|X_n|} \beta_{d_n^X,A_{n+1}}\right)\circ
\left(\dot{\oplus}_{i=1}^{|X_n|} \iota_{A_n,A_{n+1}}\right)$$
is a unital completely positive map from $\cl S_{X_n,A_n}$ to $\cl S_{X_{n+1},A_{n+1}}$. 
We thus obtain an inductive sequence 
\begin{equation}\label{eq_opsind}
\cl S_{X_1,A_1}\stackrel{\gamma_{X_1,A_1}}{\longrightarrow}
\cl S_{X_2,A_2}\stackrel{\gamma_{X_2,A_2}}{\longrightarrow}
\cl S_{X_3,A_3}\stackrel{\gamma_{X_3,A_3}}{\longrightarrow}\cdots
\end{equation} 
in the operator system category.
 We let 
$\cl S_{X,A} = \varinjlim \cl S_{X_n,A_n}$ 
be the corresponding inductive limit. 
We let $\gamma^{(n)}_{X,A} : \cl S_{X_n,A_n}\to \cl S_{X,A}$
be the canonical unital completely positive map, arising from the inductive sequence 
(\ref{eq_opsind}), $n\in \bb{N}$. 

\begin{remark}\label{p_gammacoi} 
\rm 
Let $(X_n)_{n\in \bb{N}}$ and $(A_n)_{n\in \bb{N}}$ be inductive families of sets. 
Using known results about inductive limits and 
coproducts in the operator system category,
namely \cite[Proposition 4.13]{mt} and \cite[Section 8]{kavruk-nucl}, one can show that 
the maps $\gamma_{X_n,A_n}$ and $\gamma^{(n)}_{X,A}$ are 
unital complete order isomorphisms; thus, $\cl S_{X_n,A_n}$ can be canonically identified 
with an operator subsystem of $\cl S_{X,A}$, $n\in \bb{N}$. 
Since this fact will not be needed in the sequel, 
we do not include its proof.
\end{remark}

\begin{theorem}\label{c_statSXA}
Let $H$ be a Hilbert space, and $(X_n)_{n\in \bb{N}}$ and $(A_n)_{n\in \bb{N}}$ be inductive families of sets. 
The unital completely positive maps $\Gamma : \cl S_{X,A}\to \cl B(H)$ are in a canonical 
bijective correspondence with the unital completely positive maps 
$\Phi : C(\Omega_A)\to L^{\infty}(\Omega_X)\bar{\otimes} \cl B(H)$.
\end{theorem}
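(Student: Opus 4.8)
The plan is to factor the desired correspondence through the two layered descriptions already available: on one side the universal property of the inductive limit $\cl S_{X,A} = \varinjlim \cl S_{X_n,A_n}$, and on the other side Theorem~\ref{th_indcorresp}. Concretely, I would first note that, since $\gamma^{(n+1)}_{X,A}\circ\gamma_{X_n,A_n}=\gamma^{(n)}_{X,A}$, the universal property of the inductive limit identifies a unital completely positive map $\Gamma:\cl S_{X,A}\to\cl B(H)$ with the compatible sequence $(\Gamma_n)_{n\in\bb N}$, $\Gamma_n:=\Gamma\circ\gamma^{(n)}_{X,A}$, characterised by $\Gamma_{n+1}\circ\gamma_{X_n,A_n}=\Gamma_n$. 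Dually, Theorem~\ref{th_indcorresp} identifies a unital completely positive map $\Phi:C(\Omega_A)\to L^\infty(\Omega_X)\bar{\otimes}\cl B(H)$ with an inductive family $(\Phi_n)_{n\in\bb N}$, $\Phi_n:\cl D_{A_n}\to\cl D_{X_n}\otimes\cl B(H)$, satisfying (\ref{eq_inre}). Thus it suffices to set up a bijection between compatible sequences $(\Gamma_n)$ and inductive families $(\Phi_n)$.

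For this I descend one level further. Since $\cl S_{X_n,A_n}=\dot{\oplus}_{x\in X_n}\cl D_{A_n}$, the universal property of the coproduct identifies $\Gamma_n$ with the family $(\Gamma_{n,x})_{x\in X_n}$ of unital completely positive maps $\Gamma_{n,x}:=\Gamma_n\circ\iota_x:\cl D_{A_n}\to\cl B(H)$. On the channel side, the finiteness of $X_n$ gives a unital $*$-isomorphism $\cl D_{X_n}\otimes\cl B(H)\cong\bigoplus_{x\in X_n}\cl B(H)$, and composing $\Phi_n$ with the coordinate projections identifies $\Phi_n$ with a family $(\Phi_{n,x})_{x\in X_n}$ of unital completely positive maps $\Phi_{n,x}:\cl D_{A_n}\to\cl B(H)$; both passages are bijective. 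The candidate correspondence is then simply $\Gamma_{n,x}=\Phi_{n,x}$ for all $n$ and all $x\in X_n$, and the whole theorem reduces to checking that this identification carries the operator-system compatibility relation onto the channel inductivity relation.

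The key computation is to unwind both relations into statements about the families $(\Gamma_{n,x})$ and $(\Phi_{n,x})$. Writing $X_{n+1}=X_n\times[d_n^X]$, I would evaluate $\gamma_{X_n,A_n}=(\dot{\oplus}_{x}\beta_{d_n^X,A_{n+1}})\circ(\dot{\oplus}_{x}\iota_{A_n,A_{n+1}})$ on the coproduct generators, using $(\dot{\oplus}_x\psi_x)(\iota_x(u))=\iota_x(\psi_x(u))$ and the definition of $\beta_{d_n^X,A_{n+1}}$ as an average; under the associativity identification $\dot{\oplus}_{x}\cl S_{[d_n^X],A_{n+1}}=\cl S_{X_{n+1},A_{n+1}}$ this yields that $\Gamma_{n+1}\circ\gamma_{X_n,A_n}=\Gamma_n$ is equivalent to $\Gamma_{n,x}=\tfrac{1}{d_n^X}\sum_{j=1}^{d_n^X}\Gamma_{n+1,(x,j)}\circ\iota_{A_n,A_{n+1}}$ for every $x\in X_n$. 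On the other side, using $\cl E_{X_{n+1},X_n}=\id_{X_n}\otimes\tr_{d_n}$, the conditional expectation $\tilde{\cl E}_{X_{n+1},X_n}$ acts on $\bigoplus_{(x,j)}\cl B(H)$ by averaging the blocks over $j$, so that (\ref{eq_inre}) is equivalent to $\Phi_{n,x}=\tfrac{1}{d_n^X}\sum_{j=1}^{d_n^X}\Phi_{n+1,(x,j)}\circ\iota_{A_n,A_{n+1}}$ for every $x\in X_n$. These two relations coincide, so $\Gamma_{n,x}=\Phi_{n,x}$ indeed intertwines the two compatibility structures, and assembling the bijections at each layer gives the claimed canonical correspondence.

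The main obstacle I anticipate is precisely this last matching: it requires carefully tracking the coproduct associativity identification $\dot{\oplus}_{x\in X_n}\dot{\oplus}_{j\in[d_n^X]}\cl D_{A_{n+1}}=\dot{\oplus}_{w\in X_{n+1}}\cl D_{A_{n+1}}$ and verifying that the averaging built into $\beta_{d_n^X,A_{n+1}}$ corresponds exactly, factor by factor, to the normalised trace $\tr_{d_n}$ appearing in $\cl E_{X_{n+1},X_n}$, i.e.\ that the encoding average on the operator-system side is dual to the conditional expectation on the channel side. Everything else (the two universal properties and the direct-sum decomposition of $\cl D_{X_n}\otimes\cl B(H)$) is formal; the content is that the normalisation constant $1/d_n^X$ enters identically on both sides, which is exactly how the maps $\beta_{d_n^X,A_{n+1}}$ and the uniform-distribution conditional expectations were designed.
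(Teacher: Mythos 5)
Your proposal is correct and takes essentially the same route as the paper: both arguments reduce, via the universal property of $\varinjlim \cl S_{X_n,A_n}$ on one side and Theorem \ref{th_indcorresp} on the other, to checking that the compatibility relation $\Gamma_n = \Gamma_{n+1}\circ\gamma_{X_n,A_n}$ corresponds exactly to the inductivity relation (\ref{eq_inre}) under the finite-level correspondence $\Phi_n(\delta_a)=\sum_{x\in X_n}\delta_x\otimes\Gamma_n(e_{x,a})$. Your per-coordinate unwinding, with the averaging identity $\Gamma_{n,x}=\tfrac{1}{d_n^X}\sum_{j=1}^{d_n^X}\Gamma_{n+1,(x,j)}\circ\iota_{A_n,A_{n+1}}$ matched against the blockwise action of $\tilde{\cl E}_{X_{n+1},X_n}$, is just a coordinatewise rephrasing of the paper's slice-map computation of $\Gamma_{n+1}(\gamma_{X_n,A_n}(e_{x,a}))$.
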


\begin{proof}
For $k\in \bb{N}$ and $\omega\in \cl D_{X_k}$, let $L_{\omega} : \cl D_{X_k}\otimes \cl B(H)\to \cl B(H)$
be the slice map, given by 
$$L_{\omega}(S\otimes T) = \langle \omega,S\rangle T, \ \ \ S\in \cl D_{X_k}, T\in \cl B(H)$$
(we recall that the duality is with respect to normalised traces).

By the universal property of the inductive limit, 
the unital completely positive maps $\Gamma : \cl S_{X,A}\to \cl B(H)$ are in a canonical correspondence 
with the sequences $(\Gamma_n)_{n\in \bb{N}}$ of unital completely positive maps, where $\Gamma_n : \cl S_{X_n,A_n}\to \cl B(H)$, $n\in \bb{N}$, 
satisfy the conditions
\begin{equation}\label{eq_snsn+1}
\Gamma_n = \Gamma_{n+1}\circ \gamma_{X_n,A_n}, \ \ \ n\in \bb{N}.
\end{equation}
On the other hand, the unital completely positive maps $\Gamma_n : \cl S_{X_n,A_n} \to \cl B(H)$ are in a canonical correspondence 
with unital completely positive maps  $\Phi_n : \cl D_{A_n}\to \cl D_{X_n}\otimes \cl B(H)$ via the 
assignment
$\Phi_n(\delta_a) = \sum_{x\in X_n} \delta_x\otimes \Gamma_n(e_{x,a})$. 
We note that the latter equality is equivalent to the identities 
\begin{equation}\label{eq_dualslice}
\lvert X_n\rvert
L_{\delta_x}(\Phi_n(\delta_a)) = \Gamma_n(e_{x,a}), \ \ \ x\in X_n, a\in A_n.
\end{equation}
Finally, observe that 
condition (\ref{eq_snsn+1}) is equivalent to condition (\ref{eq_inre}) being satisfied for the family 
$(\Phi_n)_{n\in \bb{N}}$, 
as follows from the fact that, if $x\in X_n$ and $a\in A_n$, then
\begin{eqnarray*}
\Gamma_{n+1}(\gamma_{X_n,A_n}(e_{x,a}))
& = & 
\frac{1}{d_n^X} \sum_{\mu_n = 1}^{d_n^X}
\sum_{\lambda_n =1}^{d_n^A} 
\Gamma_{n+1}(e_{(x,\mu_n), (a,\lambda_n)})\\
& = & 
\lvert X_n\rvert
\sum_{\mu_n = 1}^{d_n^X}
\sum_{\lambda_n =1}^{d_n^A} 
L_{\delta_{(x,\mu_n)}}(\Phi_{n+1}(\delta_{(a,\lambda_n)}))\\
& = & 
\lvert X_n\rvert \sum_{\mu_n = 1}^{d_n^X}
L_{\delta_{(x,\mu_n)}}((\Phi_{n+1}\circ\iota_{A_n,A_{n+1}})(\delta_a))\\
& = & 
\lvert X_n\rvert L_{\iota_{X_n,X_{n+1}}(\delta_x)}((\Phi_{n+1}\circ\iota_{A_n,A_{n+1}})(\delta_a))\\
& = & 
\lvert X_n\rvert
L_{\delta_x}((\tilde{\cl{E}}_{X_{n+1},X_n}
\hspace{-0.05cm}\circ\hspace{-0.05cm}\Phi_{n+1}
\hspace{-0.05cm}\circ \hspace{-0.05cm} \iota_{A_n,A_{n+1}})(\delta_a)).\\
\end{eqnarray*} 
The statement now follows from Theorem \ref{th_indcorresp}. \end{proof}

\begin{remark}\label{r_forfinnew}
\rm 
We point out for further use that the statement of Theorem \ref{c_statSXA} is true
and, up to our knowledge, part of folklore 
in the case where  $X$ and $A$ are finite sets instead of inductive families; 
a proof readily follows from that of Theorem \ref{c_statSXA} with the 
straightforward modifications. 
\end{remark}

\begin{remark}\label{r_forcp}
\rm 
The statement of Theorem \ref{c_statSXA} remains true when 
the map $\Gamma$ is a (not necessarily unital) completely positive map; this 
follows by inspection of the proof, together with the fact that
the operator system inductive limit satisfies a universal 
property for inductive families of completely positive maps that are not necessarily unital.
To show the latter fact, suppose that 
(\ref{eq_indlimS}) is an inductive sequence in the operator system category, 
$\cl R$ is an operator system, and 
$\rho_k : \cl S_k\to \cl R$ are completely positive maps, 
such that $\rho_{k+1}\circ\phi_k = \rho_k$, $k\in \bb{N}$.
Since the map $\phi_k$ in (\ref{eq_indlimS}) is unital, we have that 
there exists $w\in \cl R$ such that 
$\rho_k(1_{\cl S_k}) = w$ for every $k\in \bb{N}$; 
clearly, $0\leq w\leq 1_{\cl R}$.
Let $(s_k)_{k\in \bb{N}}$ be an inductive sequence of states, 
where $s_k : \cl S_k\to \bb{C}$, and $s : \varinjlim \cl S_k\to \bb{C}$ be 
the associated state on the inductive limit operator system. 
Let
$\alpha_k : \cl S_k\to \cl R$ be the map, given by 
$\alpha_k(u) = s_k(u)(1-w)$, 
and $\tilde{\rho}_k = \rho_k + \alpha_k$; thus, 
$\tilde{\rho}_k : \cl S_k\to \cl R$ is a unital completely positive map, $k\in \bb{N}$. 
Moreover, 
$$(\tilde{\rho}_{k+1}\circ\phi_k)(u) = \rho_{k+1}(\phi_k(u)) + s(u)(1-w) = 
\tilde{\rho}_k(u), \ \ \ u\in \cl S_k.$$
By the universal property of the inductive limit (for unital completely positive maps), 
there exists a unital completely positive map 
$\tilde{\rho} : \varinjlim \cl S_k \to \cl R$ such that 
$\tilde{\rho}\circ\phi_{k,\infty} = \tilde{\rho}_k$ for every $k\in \bb{N}$. 

Let $\alpha : \varinjlim \cl S_k\to \cl R$ be the map, given by 
$\alpha(u) = s(u)(1-w)$. 
We have that 
$$\tilde{\rho}^{(n)}(\phi_{k,\infty}^{(n)}(u)) - \alpha^{(n)}(\phi_{k,\infty}^{(n)}(u)) \in 
M_n(\cl R)^+, \ \ \ u\in M_n(\cl S_k)^+.$$ 
By density, $\rho := \tilde{\rho} - \alpha$ is completely positive.
Finally, if $k\in \bb{N}$ then 
$$(\rho\circ\phi_{k,\infty})(u) =  
\tilde{\rho}(\phi_{k,\infty}(u)) - \alpha(\phi_{k,\infty}(u)) = 
\tilde{\rho}_k(u) - \alpha_k(u)
= \rho_k(u).$$  
\end{remark}

\begin{remark}\label{r_BWhomeom}
\rm 
Let $(X_n)_{n\in \bb{N}}$ and $(A_n)_{n\in \bb{N}}$ be inductive families of sets.
Suppose that $x\in X_n$ and $a\in A_n$ for some $n\in \bb{N}$. 
Equation (\ref{eq_dualslice}) can be rewritten as 
$$\lvert X_n\rvert
\langle\delta_x\otimes\omega,\Phi_n(\delta_a)\rangle = \langle\Gamma_n(e_{x,a}),\omega\rangle, 
\ \ \omega\in \cl B(H)_*,$$
that is, 
\begin{equation}\label{eq_forifu}
\left\langle \frac{1}{\tau_X(\iota^{(n)}_{X,1}(\delta_x))}\iota^{(n)}_{X,1}(\delta_x) \otimes\omega,\Phi(\iota^{(n)}_{A,\infty}(\delta_a))\right\rangle = \langle\Gamma(\gamma^{(n)}_{X,A}(e_{x,a})),\omega\rangle
\end{equation}
for every $\omega\in \cl B(H)_*$. 
By identity (\ref{eq_forifu}) and uniform boundedness, 
the map $\Phi\to \Gamma$ is BW continuous. 
Since the linear span of the elements of the form 
$\frac{1}{\tau_X(\iota^{(n)}_{X,1}(\delta_x))}\iota^{(n)}_{X,1}(\delta_x) \otimes\omega$ is 
dense in $L^1(\Omega_X)\hat{\otimes}\cl B(H)_*$, we have, in fact, that the 
correspondence $\Phi \leftrightarrow \Gamma$ is a BW-BW homeomorphism. 
\end{remark}

We note that, as the predual of $L^{\infty}(\Omega_X)$, the space $L^1(\Omega_X)$
admits a canonical operator space structure, and that, if 
$C(\Omega_A)\hat{\otimes} L^1(\Omega_X)$ denotes the operator space projective 
tensor product, up to a canonical complete isometry we have that 
$$\left(C(\Omega_A)\hat{\otimes} L^1(\Omega_X)\right)^* = {\rm CB}(C(\Omega_A),L^{\infty}(\Omega_X))$$
(see \cite[Proposition 7.1.2]{er}). 
By the previous paragraph, there exists a canonical weak*-homeomorphic order 
isomorphism between the positive cones of 
$\cl S_{X,A}^*$ and ${\rm UCP}(C(\Omega_A),L^{\infty}(\Omega_X))$. 
Passing to preduals, we obtain a order isomorphism 
between $\cl S_{X,A}^+$ and a dense subspace of the predual cone 
$\left(C(\Omega_A)\hat{\otimes} L^1(\Omega_X)\right)^+$ of the cone 
${\rm CP}(C(\Omega_A),L^{\infty}(\Omega_X))$. 
A straightforward argument shows that the latter correspondence can be 
extended to the whole of $\cl S_{X,A}$. 
Through the latter identification,
for $x\in X_n$ and $a\in A_n$, the element $\gamma^{(n)}_{X,A}(e_{x,a})$ corresponds to 
the elementary tensor 
$\chi_{\tilde{x}\times\tilde{a}} := \chi_{\tilde{x}}\otimes\chi_{\tilde{a}}$, where 
\begin{equation}\label{eq_cylinddd}
\tilde{x} = \left\{xx' : x'\in \prod_{i=n+1}^{\infty} [d_i^X]\right\} \ \mbox{ and } \ 
\tilde{a} = \left\{aa' : a'\in \prod_{i=n+1}^{\infty} [d_i^A]\right\}.
\end{equation}


\section{Cantor no-signalling correlations}\label{s_rvs}

In this section we define no-signalling correlations
over Cantor spaces and provide characterisations 
thereof in terms of states on the maximal operator system tensor 
product of operator systems from the class introduced in Section \ref{s_ovic}. 
In view of the nuclearity of abelian C*-algebras, in the sequel we will use the symbol $\otimes$ for their C*-algebraic tensor product. 

Assume that $S, T, U$ and $V$ are finite sets. 
A no-signalling correlation 
$p = \{(p(u,v|s,t))_{u,v} : s\in S, t\in T\}$
(see Section \ref{s_finite}) 
gives rise to the unital (completely) positive map
$\Gamma_p :  \cl D_U \otimes \cl  D_V \to \cl 
 D_S\otimes \cl  D_T $, 
given by 
$$\Gamma_p(\delta_u\otimes\delta_v) = 
\sum_{s\in S} \sum_{t\in T} p(u,v|s,t)
\delta_s\otimes\delta_t, \ \ \ u\in U, v\in V;
$$
it is straightforward to verify that, moreover, 
\begin{equation}\label{eq_Gammap}
\Gamma_p(\cl D_U\otimes 1_{\cl D_V})\subseteq \cl D_S \otimes 1_{\cl D_T} \ \mbox{ and } \ 
\Gamma_p(1_{\cl D_U}\otimes \cl D_V)\subseteq 1_{\cl D_S}\otimes \cl D_T.
\end{equation}
Conversely, every unital (completely) positive 
map $\Gamma :  \cl D_U \otimes \cl  D_V \to \cl 
D_S\otimes \cl  D_T$ satisfying the conditions (\ref{eq_Gammap}) is easily seen to have the form 
$\Gamma = \Gamma_p$ for some no-signalling correlation $p$. 
Therefore, by abuse of terminology, we use the term 
\lq\lq no-signalling correlation'' in reference to 
unital (completely) positive maps satisfying 
(\ref{eq_Gammap}). 
Fixing inductive families of finite sets
$X = (X_n)_{n\in \bb{N}}$, $Y = (Y_n)_{n\in \bb{N}}$, 
$A = (A_n)_{n\in \bb{N}}$ and $B = (B_n)_{n\in \bb{N}}$, 
these observations justify the following definition.

\begin{definition}\label{d_nsCOm}
\rm A unital completely positive map $$\Gamma:C(\Omega_A)\otimes C(\Omega_B)\rightarrow L^\infty(\Omega_X)\bar{\otimes}L^\infty(\Omega_Y)$$
will be called a \emph{no-signalling correlation} over the quadruple $(X,Y,A,B)$ if
$$\Gamma\left(C(\Omega_A)\otimes 1_{B}\right)\subseteq L^\infty(\Omega_X)\otimes 1_{Y}$$ and
$$\Gamma\left(1_{A}\otimes C(\Omega_B)\right)\subseteq 1_{X}\otimes L^\infty(\Omega_Y).$$
\end{definition}

We denote by 
$\cl C_{\rm ns}(X,Y,A,B)$ the set of all no-signalling correlations over the quadruple $(X,Y,A,B)$, and write $\cl C_{\rm ns}$ in case no confusion may arise.

Given a unital completely positive map 
$$\Gamma : C(\Omega_A)\otimes C(\Omega_B) \to 
L^\infty(\Omega_X)\bar{\otimes} L^\infty(\Omega_Y),$$ 
define (unital completely positive) maps 
$$\Gamma_n : \cl D_{A_n}\otimes\cl D_{B_n} \to 
\cl D_{X_n}\otimes \cl D_{Y_n}, \ \ \ n \in \bb N,$$ 
by setting
\begin{equation} \label{Gamma_n}
    \Gamma_n:=(\cl{E}_{X_n}^{\infty}\otimes
\cl{E}_{Y_n}^{\infty})\circ\Gamma\circ(\iota_{A}^{(n)}\otimes\iota^{(n)}_{B});
\end{equation} 
by Theorem \ref{th_indcorresp}, 
the family $(\Gamma_n)_{n\in \bb N}$ is inductive, that is,
\begin{align}\label{eq_indrel}
    \Gamma_n=(\cl{E}_{X_{n+1},X_n}\otimes \cl{E}_{Y_{n+1},Y_n})\circ \Gamma_{n+1}\circ (\iota_{A_n,A_{n+1}}\otimes\iota_{B_n,B_{n+1}}), \ \ \ n\in \bb N;
\end{align}
we say that the family $(\Gamma_n)_{n\in \bb{N}}$ is \emph{associated with} 
the map $\Gamma$.

\begin{proposition} \label{p_gammans} 
Let $\Gamma : C(\Omega_A)\otimes C(\Omega_B)\to 
L^\infty(\Omega_X)\bar{\otimes} L^\infty(\Omega_Y)$ be a unital completely positive map and $(\Gamma_n)_{n \in \bb N}$ be the inductive family associated 
with $\Gamma$. The following are equivalent:
\begin{itemize}
\item[(i)] $\Gamma\in\cl{C}_{\rm ns}(X,Y,A,B)$;
\item[(ii)] $\Gamma_n\in\cl{C}_{\rm ns}(X_n,Y_n,A_n,B_n)$ for every $n\in\bb{N}$.
\end{itemize}
\end{proposition}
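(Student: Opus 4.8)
The plan is to prove the two implications separately, exploiting the defining relation (\ref{Gamma_n}) for the finite maps $\Gamma_n$: the forward implication is a short computation, while the converse requires a weak*-approximation (martingale) argument.

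For (i) $\Rightarrow$ (ii) I would fix $n\in\bb{N}$ and $u\in\cl D_{A_n}$ and compute directly. Since $\iota^{(n)}_B$ is unital, relation (\ref{Gamma_n}) gives
$$\Gamma_n(u\otimes 1_{\cl D_{B_n}}) = (\cl E_{X_n}^\infty\otimes\cl E_{Y_n}^\infty)\bigl(\Gamma(\iota_A^{(n)}(u)\otimes 1_B)\bigr).$$
The hypothesis $\Gamma\in\cl C_{\rm ns}$ forces $\Gamma(\iota_A^{(n)}(u)\otimes 1_B)=g\otimes 1_Y$ for some $g\in L^\infty(\Omega_X)$, and applying $\cl E_{X_n}^\infty\otimes\cl E_{Y_n}^\infty$ (using $\cl E_{Y_n}^\infty(1_Y)=1_{\cl D_{Y_n}}$) lands the result in $\cl D_{X_n}\otimes 1_{\cl D_{Y_n}}$. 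The second no-signalling condition for $\Gamma_n$ follows symmetrically. This direction is routine.

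The converse (ii) $\Rightarrow$ (i) is the heart of the matter. First I would reduce to cylinder functions: because $\bigcup_n\iota_A^{(n)}(\cl D_{A_n})$ is norm-dense in $C(\Omega_A)$, the map $\Gamma$ is norm-continuous, and $L^\infty(\Omega_X)\otimes 1_Y$ is weak*-closed (being a von Neumann subalgebra), it suffices to show $\Gamma(\iota_A^{(n)}(u)\otimes 1_B)\in L^\infty(\Omega_X)\otimes 1_Y$ for each $n$ and $u\in\cl D_{A_n}$. Writing $F:=\Gamma(\iota_A^{(n)}(u)\otimes 1_B)$, the key identity is that, for every $m\geq n$,
$$(\cl E_{X_m}^\infty\otimes\cl E_{Y_m}^\infty)(F) = \Gamma_m\bigl(\iota_{A_n,A_m}(u)\otimes 1_{\cl D_{B_m}}\bigr),$$
which follows from the coherence relations $\iota_A^{(n)}=\iota_A^{(m)}\circ\iota_{A_n,A_m}$ and $1_B=\iota_B^{(m)}(1_{\cl D_{B_m}})$ together with (\ref{Gamma_n}); by hypothesis (ii) the right-hand side lies in $\cl D_{X_m}\otimes 1_{\cl D_{Y_m}}\subseteq L^\infty(\Omega_X)\otimes 1_Y$.

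Finally I would let $m\to\infty$. Since the conditional expectations $\cl E_{X_m}^\infty\otimes\cl E_{Y_m}^\infty$ onto the increasing algebras $\cl D_{X_m}\otimes\cl D_{Y_m}$ converge to the identity in the weak* topology (martingale convergence, using that $\bigcup_m\cl D_{X_m}\otimes\cl D_{Y_m}$ is weak*-dense in $L^\infty(\Omega_X)\bar\otimes L^\infty(\Omega_Y)$), we have that $(\cl E_{X_m}^\infty\otimes\cl E_{Y_m}^\infty)(F)\to F$ weak* as $m\to\infty$; as every term of this net lies in the weak*-closed subspace $L^\infty(\Omega_X)\otimes 1_Y$, so does $F$. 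The remaining no-signalling inclusion is obtained by interchanging the roles of $A,X$ and $B,Y$. I expect the only genuinely delicate point to be the justification of the weak*-convergence of the conditional expectations; this is standard martingale convergence, and the weak*-closedness of $L^\infty(\Omega_X)\otimes 1_Y$ is immediate since it is a von Neumann subalgebra.
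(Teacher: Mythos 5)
Your proof is correct and takes essentially the same route as the paper's: the same direct computation for (i)$\Rightarrow$(ii), and, for the converse, the same reduction to cylinder functions by norm density followed by the key identity $(\cl E_{X_m}^{\infty}\otimes\cl E_{Y_m}^{\infty})(F)=\Gamma_m\bigl(\iota_{A_n,A_m}(u)\otimes 1_{\cl D_{B_m}}\bigr)$ for $m\geq n$. The only divergence is the final limiting step, where the paper passes to the limit in norm after embedding via $\iota_X^{(k)}\otimes\iota_Y^{(k)}$, whereas you invoke weak* martingale convergence of the conditional expectations --- if anything the more careful choice, since $F=\Gamma(\iota_A^{(n)}(u)\otimes 1_B)$ need not lie in the norm closure of $\bigcup_k \cl D_{X_k}\otimes\cl D_{Y_k}$, and weak* convergence suffices because $L^\infty(\Omega_X)\otimes 1_Y$ is weak*-closed.
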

\begin{proof}
    (i) $\Rightarrow$ (ii) 
    Let $ n \in \bb N$ and $ f\in \cl D_{A_n}$. 
By symmetry, it suffices to show that $\Gamma_n(f \otimes 1_{B_n}) \in \cl D_{X_n} \otimes 1_{B_n}$. 
Since $\Gamma$ is no-signalling,  $\Gamma(\iota^{(n)}_{A}(f)\otimes 1_{B}) \in L^{\infty}(\Omega_{X})\otimes 1_{B}$. 
    The claim follows from the fact that $ (\cl E_{X_n}\otimes \cl E_{Y_n})(L^{\infty}(\Omega_{X})\otimes 1_{B}) = \cl D_{X_n} \otimes 1_{B_n}$. 
    
    (ii) $\Rightarrow$ (i) 
    Assume that $ \Gamma_n$ is no-signalling for every $ n \in \bb N$. It suffices to show that $ \Gamma(\iota^{(n)}(f_n) \otimes 1_{B}) \in L^{\infty}(\Omega_X ) \otimes 1_Y$ for every $ f_n\in D_{A_n}$ and $ n \in \bb N$. Indeed, if we then pick $ f\in C(\Omega_A)$ and set $ f_n:= \cl E_{A_n}(f) \in \cl D_{A_n}$, we have that $ f = \lim_{n\to \infty}\iota^{(n)}_{A}(f_n)$ in norm and, as $\Gamma$ is continuous,
$$ \Gamma(f\otimes 1_{B})= \lim_{n \to \infty} \Gamma(\iota^{(n)}(f_n) \otimes 1_{B}) \in L^{\infty}(\Omega_X ) \otimes 1_Y.$$ 
Let $ f\in \cl D_{A_n}$ for some $ n \in \bb N$.
Let $ k \geq n$, and note that 
\begin{align*}
    & \left((\cl E_{X_k}^{\infty} \otimes \cl E_{Y_k}^{\infty}) \circ\Gamma\right)(\iota_{A}^{(n)}(f)\otimes 1_B)\\ 
    & = (\cl E_{X_k}^{\infty} \otimes \cl E_{Y_k}^{\infty}) \circ\Gamma \circ (\iota_{A}^{(k)} \otimes \iota^{(k)}_{B})( \iota_{A_n,A_{k}} (f)\otimes 1_{B_{k}}) 
    = \Gamma_{k}( \iota_{A_n,A_{k}} (f)\otimes 1_{B_{k}}).  
\end{align*}
Thus, $  (\cl E_{X_k}^{\infty} \otimes \cl E_{Y_k}^{\infty}) \circ\Gamma(\iota_{A}^{(n)}(f)\otimes 1_B) \in \cl D_{X_k} \otimes 1_{Y_k}$ for all $ k \geq n$ since $\Gamma_k$ is no-signalling for all $ k\in \bb N$. Hence, 
\[ (\iota^{(k)}_{X}\otimes \iota^{(k)}_{Y})\circ(\cl E_{X_k}^{\infty} \otimes \cl E_{Y_k}^{\infty}) \circ\Gamma(\iota_{A}^{(n)}(f)\otimes 1_B) \in L^{\infty}(\Omega_{X})\otimes 1_{Y} \]
for all $ k \geq n$ and if we take the limits in norm as 
$k\to\infty$, we conclude that 
\[ \Gamma(\iota_{A}^{(n)}(f)\otimes 1_B) \in L^{\infty}(\Omega_{X})\otimes 1_{Y},\]
as desired. The result follows by symmetry.
\end{proof}


Given a no-signalling correlation $\Gamma$ over the quadruple $(X,Y,A,B)$, we write 
$p_{\Gamma,n}$ for the family of conditional probability distributions on $A_n\times B_n$, indexed by 
$X_n\times Y_n$, corresponding to $\Gamma_n$ by (\ref{Gamma_n}), that is, 
$$p_{\Gamma,n}(a,b|x,y) =\lvert X_n\rvert  \lvert Y_n\rvert \left\langle \delta_x\otimes \delta_y,\Gamma_n(\delta_a\otimes \delta_b)\right\rangle,$$
where $x\in X_n$, $y\in Y_n$, $a\in A_n$, $b\in B_n$ and,
as before, the pairing is given by the normalised traces 
(that is, $\langle \delta_1\otimes\delta_2,\omega_1\otimes\omega_2\rangle=\tr_{X_n}(\delta_1\omega_1)\tr_{Y_n}(\delta_2\omega_2)$). 
By Proposition \ref{p_gammans}, the correlations $p_{\Gamma,n}$ are no-signalling. 
By Theorem \ref{th_orcha}, there exist states $s_{\Gamma_n} : \cl{S}_{X_n,A_n}\otimes_{\rm max}\cl{S}_{Y_n,B_n}\rightarrow\bb{C}$, such that 
$$s_{\Gamma_n}(e_{x,a}\otimes e_{y,b}) = \lvert X_n\rvert\lvert Y_n\rvert \langle \delta_x\otimes\delta_y,\Gamma_n(\delta_a\otimes\delta_b)\rangle$$
for $x\in X_n$, $y\in Y_n$, $a\in A_n$ and $b\in B_n$.
The proof of the next lemma is similar to that of Theorem \ref{c_statSXA} and is 
omitted.

\begin{lemma} \label{lem_ind_stat}
Let $(X_n)_{n\in \bb{N}}$, $(Y_n)_{n\in \bb{N}}$, 
$(A_n)_{n\in \bb{N}}$ and $(B_n)_{n\in \bb{N}}$ be inductive families of sets, and 
$\Gamma_n : \cl D_{A_n}\otimes\cl D_{B_n} \to \cl D_{X_n}\otimes\cl D_{Y_n}$
be a no-signalling correlation, $n\in \bb{N}$. 
The family $(\Gamma_n)_{n\in\bb{N}}$
is inductive if and only if 
$$s_{\Gamma_{n+1}}\circ(\gamma_{X_n,A_n}\otimes\gamma_{Y_n,B_n})
= s_{\Gamma_n},\ \ \ n\in\bb{N}.$$
\end{lemma}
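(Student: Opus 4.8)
The plan is to reduce the asserted equality of functionals to a computation on generating elementary tensors and then unwind both sides into the same quadruple sum of matrix entries of $\Gamma_n$ and $\Gamma_{n+1}$, the bridge between the two being the trace-duality between the inclusions $\iota_{A_n,A_{n+1}}$ and the conditional expectations $\cl E_{X_{n+1},X_n}$. Since $\cl S_{X_n,A_n}$ is spanned by its generators $e_{x,a}$ (the common unit being $\sum_{a}e_{x,a}$), the space $\cl S_{X_n,A_n}\otimes_{\max}\cl S_{Y_n,B_n}$ is the linear span of the elementary tensors $e_{x,a}\otimes e_{y,b}$ with $x\in X_n$, $y\in Y_n$, $a\in A_n$, $b\in B_n$. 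Hence the identity $s_{\Gamma_{n+1}}\circ(\gamma_{X_n,A_n}\otimes\gamma_{Y_n,B_n}) = s_{\Gamma_n}$ holds if and only if the two functionals agree on each such generator, and the whole argument takes place at the level of these scalars.

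First I would record, exactly as in the proof of Theorem \ref{c_statSXA}, the formula $\gamma_{X_n,A_n}(e_{x,a}) = \frac{1}{d_n^X}\sum_{\mu_n=1}^{d_n^X}\sum_{\lambda_n=1}^{d_n^A} e_{(x,\mu_n),(a,\lambda_n)}$ together with its $Y,B$ counterpart. Applying $s_{\Gamma_{n+1}}$ to the tensor of these expansions and inserting the defining relation $s_{\Gamma_{n+1}}(e_{x',a'}\otimes e_{y',b'}) = |X_{n+1}||Y_{n+1}|\langle\delta_{x'}\otimes\delta_{y'},\Gamma_{n+1}(\delta_{a'}\otimes\delta_{b'})\rangle$ produces
$$s_{\Gamma_{n+1}}\bigl((\gamma_{X_n,A_n}\otimes\gamma_{Y_n,B_n})(e_{x,a}\otimes e_{y,b})\bigr) = |X_n||Y_n|\sum_{\mu_n,\nu_n,\lambda_n,\kappa_n}\langle\delta_{(x,\mu_n)}\otimes\delta_{(y,\nu_n)},\Gamma_{n+1}(\delta_{(a,\lambda_n)}\otimes\delta_{(b,\kappa_n)})\rangle,$$
where the prefactor collapses to $|X_n||Y_n|$ because $|X_{n+1}||Y_{n+1}| = d_n^X d_n^Y|X_n||Y_n|$ cancels the $\tfrac{1}{d_n^X d_n^Y}$ coming from the two averaging maps $\beta$ hidden inside $\gamma$.

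Next I would expand the other side, $s_{\Gamma_n}(e_{x,a}\otimes e_{y,b}) = |X_n||Y_n|\langle\delta_x\otimes\delta_y,\Gamma_n(\delta_a\otimes\delta_b)\rangle$, by substituting the inductivity relation (\ref{eq_indrel}). The two elementary facts needed are $\iota_{A_n,A_{n+1}}(\delta_a) = \sum_{\lambda_n}\delta_{(a,\lambda_n)}$ (and likewise for $B$), and the trace-adjointness identity $\langle\delta_x,\cl E_{X_{n+1},X_n}(S)\rangle = \sum_{\mu_n}\langle\delta_{(x,\mu_n)},S\rangle$, valid for $S\in\cl D_{X_{n+1}}$ by the module property and trace-preservation of the conditional expectation (and again its $Y$ counterpart). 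Feeding these into the right-hand side of (\ref{eq_indrel}) reproduces verbatim the quadruple sum displayed above; thus under inductivity the two evaluations coincide for every generator, giving the forward implication. Conversely, if the functionals agree on all generators, then for each $x,y,a,b$ one gets $\langle\delta_x\otimes\delta_y,\Gamma_n(\delta_a\otimes\delta_b)\rangle = \langle\delta_x\otimes\delta_y,[(\cl E_{X_{n+1},X_n}\otimes\cl E_{Y_{n+1},Y_n})\circ\Gamma_{n+1}\circ(\iota_{A_n,A_{n+1}}\otimes\iota_{B_n,B_{n+1}})](\delta_a\otimes\delta_b)\rangle$, and non-degeneracy of the trace pairing on the finite-dimensional space $\cl D_{X_n}\otimes\cl D_{Y_n}$ forces (\ref{eq_indrel}), i.e.\ inductivity of $(\Gamma_n)_{n\in\bb N}$.

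There is no deep conceptual obstacle here; the content is the bookkeeping of normalisation constants and index ranges, which is precisely why the authors flag the proof as similar to that of Theorem \ref{c_statSXA}. The one point demanding care, and the analogue of the delicate step there, is to check that the factor $|X_{n+1}||Y_{n+1}|$ from the definition of $s_{\Gamma_{n+1}}$ combines with the $\tfrac{1}{d_n^X d_n^Y}$ from $\gamma$ to yield exactly $|X_n||Y_n|$, and that the adjointness expansion of the conditional expectations produces exactly the same summation set $\{(\mu_n,\nu_n,\lambda_n,\kappa_n)\}$ as the one generated by applying $\gamma$ to the generators. Matching these two normalisations on the nose is the entire substance of the argument.
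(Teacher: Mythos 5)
Your proof is correct and follows essentially the same route as the paper's own (omitted) argument: expand $\gamma_{X_n,A_n}\otimes\gamma_{Y_n,B_n}$ on the generators $e_{x,a}\otimes e_{y,b}$, use the defining relation of $s_{\Gamma_{n+1}}$ together with the trace-adjointness $\langle\iota_{X_n,X_{n+1}}(\delta_x),S\rangle=\langle\delta_x,\cl E_{X_{n+1},X_n}(S)\rangle$ so that the normalisation $|X_{n+1}||Y_{n+1}|/(d_n^Xd_n^Y)=|X_n||Y_n|$ collapses, and match the result against the inductivity relation (\ref{eq_indrel}). Your explicit treatment of the converse via nondegeneracy of the trace pairing on the finite-dimensional space $\cl D_{X_n}\otimes\cl D_{Y_n}$ is a welcome elaboration of the paper's terser ``reverse the arguments'' step.
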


In the proof of the next theorem, we will need an auxiliary fact 
about operator system inductive limits.
Let $\tau$ be an operator system tensor product (see \cite{kptt}). We will say that 
$\tau$ \emph{commutes with inductive limits} if, 
for every inductive sequence
$$\cl S_1\stackrel{\phi_1}{\longrightarrow} 
\cl S_2\stackrel{\phi_2}{\longrightarrow} 
\cl S_3\stackrel{\phi_3}{\longrightarrow} \cdots$$
in the operator system category, and every operator system $\cl T$, 
we have that 
$$\varinjlim (\cl S_k \otimes_{\tau} \cl T) \cong 
(\varinjlim \cl S_k) \otimes_{\tau} \cl T,$$
up to a canonical complete order isomorphism.

\begin{lemma}\label{l_comminl}
Let $\tau$ be an operator system tensor product that commutes with 
inductive limits, and
$$\cl S_1\stackrel{\phi_1}{\longrightarrow} 
\cl S_2\stackrel{\phi_2}{\longrightarrow} 
\cl S_3\stackrel{\phi_3}{\longrightarrow} \cdots$$
and
$$\cl T_1\stackrel{\psi_1}{\longrightarrow} 
\cl T_2\stackrel{\psi_2}{\longrightarrow} 
\cl T_3\stackrel{\psi_3}{\longrightarrow} \cdots$$
be inductive sequences in the operator system category with 
inductive limits $\cl S$ and $\cl T$, respectively. 
Then 
$$\varinjlim (\cl S_k \otimes_{\tau} \cl T_k) \cong 
\cl S \otimes_{\tau} \cl T,$$
up to a canonical complete order isomorphism. 
\end{lemma}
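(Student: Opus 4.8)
The plan is to reduce the statement to the hypothesis by a diagonal argument on the double-indexed system $W_{i,j} := \cl S_i\otimes_\tau\cl T_j$, $(i,j)\in\bb{N}\times\bb{N}$. First I would record that, since $\tau$ is a functorial operator system tensor product (which is what lets us form the connecting maps $\phi_k\otimes\mathrm{id}$ underlying the hypothesis in the first place), the maps $\phi_k\otimes\psi_k : \cl S_k\otimes_\tau\cl T_k\to\cl S_{k+1}\otimes_\tau\cl T_{k+1}$ are unital completely positive; tensoring the connecting maps of the two sequences turns $(W_{i,j})_{(i,j)}$ into an inductive system over the directed set $\bb{N}\times\bb{N}$, whose connecting maps are compatible with the canonical unital completely positive maps $\phi_{i,\infty}\otimes\psi_{j,\infty}: W_{i,j}\to\cl S\otimes_\tau\cl T$ furnished by functoriality. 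Working only from the universal property of the inductive limit recalled in the excerpt, I would check that $\bb{N}\times\bb{N}$-indexed inductive limits exist in the operator system category, that they may be computed as iterated limits over the two coordinates, and that any cofinal subsystem yields a canonically isomorphic limit.

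The diagonal $(W_{n,n})_{n\in\bb{N}}$ is cofinal in $\bb{N}\times\bb{N}$, so cofinality gives a canonical complete order isomorphism $\varinjlim_n W_{n,n}=\varinjlim_n(\cl S_n\otimes_\tau\cl T_n)\cong\varinjlim_{(i,j)}W_{i,j}$. On the other hand, I would evaluate the double limit by iterating: for each fixed $j$ the inner sequence $(\cl S_i\otimes_\tau\cl T_j)_i$ varies the factor in the left slot, so the hypothesis that $\tau$ commutes with inductive limits gives $\varinjlim_i(\cl S_i\otimes_\tau\cl T_j)\cong\cl S\otimes_\tau\cl T_j$ canonically; feeding this into the outer limit and invoking the symmetry of $\tau$ (valid for all tensor products of \cite{kptt}) to apply the hypothesis a second time, now in the right slot, yields $\varinjlim_j(\cl S\otimes_\tau\cl T_j)\cong\cl S\otimes_\tau(\varinjlim_j\cl T_j)=\cl S\otimes_\tau\cl T$. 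Chaining these identifications gives $\varinjlim_n(\cl S_n\otimes_\tau\cl T_n)\cong\cl S\otimes_\tau\cl T$, and tracing the maps shows the isomorphism is the canonical one induced by the family $(\phi_{n,\infty}\otimes\psi_{n,\infty})_n$.

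I expect the main obstacle to be categorical rather than operator-theoretic: one must verify, purely from the universal property, that the iterated-limit (Fubini) identification and the cofinality of the diagonal genuinely hold in the operator system category, and that the various canonical unital completely positive maps are mutually compatible so that all the isomorphisms compose correctly. Concretely, the delicate point is to confirm that the connecting maps $\phi_{i,i'}\otimes\psi_{j,j'}$ of the double system, the maps $\cl S_i\otimes_\tau\cl T_j\to\cl S\otimes_\tau\cl T_j$ arising from the inner application of the hypothesis, and the diagonal maps $\cl S_n\otimes_\tau\cl T_n\to\cl S\otimes_\tau\cl T$ all sit inside one commuting diagram; once this bookkeeping is done, uniqueness in the universal property forces the canonical map to be a complete order isomorphism. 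An equivalent route that sidesteps the abstract cofinality step is to build the inverse $\cl S\otimes_\tau\cl T\to\varinjlim_n(\cl S_n\otimes_\tau\cl T_n)$ directly, by applying the commuting hypothesis twice to present $\cl S\otimes_\tau\cl T$ as $\varinjlim_j\varinjlim_i(\cl S_i\otimes_\tau\cl T_j)$ and then routing each $\cl S_i\otimes_\tau\cl T_j$ through $\cl S_{\max(i,j)}\otimes_\tau\cl T_{\max(i,j)}$ into the diagonal limit, checking compatibility with both families of connecting maps.
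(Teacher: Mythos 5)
Your proposal is correct in substance but takes a genuinely different, more categorical route than the paper. The paper never forms the $\bb{N}\times\bb{N}$-indexed system: it directly manufactures two unital completely positive maps $\alpha : \cl S\otimes_{\tau}\cl T\to \cl R := \varinjlim(\cl S_k\otimes_{\tau}\cl T_k)$ and $\beta : \cl R\to \cl S\otimes_{\tau}\cl T$ from two ladder diagrams, invoking \cite[Remark 2.15]{mt} (a commuting ladder of inductive sequences induces a map of the limits) together with the hypothesis, which identifies $\varinjlim_k(\cl S_k\otimes_{\tau}\cl T)$ with $\cl S\otimes_{\tau}\cl T$; it then verifies $\alpha\circ\beta=\id$ and $\beta\circ\alpha=\id$ by evaluating on the canonical images $(\phi_n\otimes\psi_n)_{\infty}$ and $\phi_{n,\infty}\otimes\psi_{k,\infty}$. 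This is essentially the ``equivalent route'' you sketch in your last sentences. What the paper's approach buys is economy: it needs no existence theory for directed ($\bb{N}^2$-indexed) colimits, no cofinality lemma, and no Fubini interchange in the operator system category --- none of which appear in \cite{mt}, which treats sequences only. Your main route is viable (for the countable directed set $\bb{N}\times\bb{N}$ one can bootstrap existence of the colimit from the cofinal diagonal chain), but note that this bootstrapping makes the cofinality step nearly definitional and shifts all the real content into the iterated-limit identification, which, once unwound, is precisely the paper's two ladder diagrams; so the two arguments ultimately converge, with yours paying an extra categorical overhead in exchange for a statement that would generalise to arbitrary directed index sets.

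One caveat: your assertion that symmetry is ``valid for all tensor products of \cite{kptt}'' is false as stated --- the tensor products ${\rm el}$ and ${\rm er}$ of \cite{kptt} are functorial but not symmetric. Since the commuting-with-inductive-limits hypothesis is formulated with the varying factor in the left slot only, applying it in the right slot genuinely requires symmetry (or a two-sided hypothesis). You are in good company: the paper's own proof relies on the same point silently, since its first ladder takes the limit in the second factor (top row $\cl S_n\otimes_{\tau}\cl T_k$, $k\to\infty$), so that $\gamma_n$ acquires domain $\cl S_n\otimes_{\tau}\cl T$ only via the right-slot version of the hypothesis. For the tensor products to which the lemma is actually applied ($\min$, ${\rm c}$, $\max$) symmetry holds, so no harm results; but in your write-up you should either assume $\tau$ symmetric explicitly or state the hypothesis two-sidedly, rather than claiming symmetry for all operator system tensor products.
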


\begin{proof}
For brevity, we will use the symbol $\id_{k}$ to denote the identity map 
on either $\cl S_k$ or $\cl T_k$, depending on the context. 
Write $\cl R := \varinjlim (\cl S_k \otimes_{\tau} \cl T_k)$ and set
$\theta_{n} = \phi_n \otimes \id_{\cl T}$; 
thus, $\theta_{n} : \cl S_n\otimes_{\tau}\cl T \to \cl S_{n+1}\otimes_{\tau}\cl T$
is a unital completely positive map, $n\in \bb{N}$. 
Trivially, 
$$\theta_n \circ (\id\hspace{-0.05cm}\mbox{}_{n}\otimes \psi_{k,\infty}) 
= 
(\id\hspace{-0.05cm}\mbox{}_{n+1}\otimes \psi_{k,\infty})\circ (\phi_{n}\otimes \id\hspace{-0.05cm}\mbox{}_{k}), \ \ k \in \bb N.$$
On the other hand, using  \cite[Remark 2.15]{mt}, 
we have that the diagram 
\begin{center}
          \begin{tikzcd}[row sep=large, column sep=large] 
             \cl S_n\otimes_{\tau} \cl T_n  \arrow[d," \phi_{n,n}\otimes \id_n"] \ar[r, " \id_n \otimes \psi_n"]  &  
             \cl S_n\otimes_{\tau} \cl T_{n+1}  \arrow[d," \phi_{n,n+1}\otimes \id_{n+1}"] \ar[r, " \id_n \otimes \psi_{n+1}"] & 
             \cl S_n\otimes_{\tau} \cl T_{n+2}  \arrow[d," \phi_{n,n+2}\otimes \id_{n+2}"] \ar[r] & 
             \cdots \\
             \cl S_n\otimes_{\tau} \cl T_n  \ar[r, "\phi_{n}\otimes \psi_n"]  &  
             \cl S_{n+1}\otimes_{\tau} \cl T_{n+1}  \ar[r,"\phi_{n+1}\otimes \psi_{n+1}"] & 
             \cl S_{n+2}\otimes_{\tau} \cl T_{n+2}  \ar[r] & 
             \cdots
              \end{tikzcd}
               \end{center}
yields a canonical unital completely positive map 
$$ \gamma_n:\cl S_n\otimes_{\tau}\cl T \to \cl R,$$ 
such that 
$$\gamma_n \circ (\id\hspace{-0.05cm}\mbox{}_n\otimes \psi_{k,\infty})= (\phi_{k}\otimes \psi_{k})_{\infty}\circ (\phi_{n,k}\otimes \id\hspace{-0.05cm}\mbox{}_k), \ \ \ k\geq n.$$ 
Thus, the pair $(\cl R, (\gamma_n)_{n\in \bb N})$ satisfies 
\begin{align*}
(\gamma_{n+1} \circ \theta_{n})\circ
(\id\hspace{-0.05cm}\mbox{}_{n}\otimes \psi_{k,\infty}) 
&
= \gamma_{n+1} \circ (\id\hspace{-0.05cm}\mbox{}_{n+1}\otimes \psi_{k,\infty})\circ (\phi_{n}\otimes \id\hspace{-0.05cm}\mbox{}_{k})\\
&= (\phi_{k}\otimes \psi_{k})_{\infty}\circ (\phi_{n+1,k}\otimes \id\hspace{-0.05cm}\mbox{}_k) \circ (\phi_n\otimes\id\hspace{-0.05cm}\mbox{}_{k})\\
&= (\phi_{k}\otimes \psi_{k})_{\infty}\circ (\phi_{n,k}\otimes \id\hspace{-0.05cm}\mbox{}_k)\\
& = \gamma_{n} \circ (\id\hspace{-0.05cm}\mbox{}_{n}\otimes \psi_{k,\infty})
\end{align*}
for each $k\geq n+1$, hence $\gamma_{n+1} \circ \theta_{n}= \gamma_n$ and by the universal property of the inductive limit $ \varinjlim(\cl S_{k}\otimes_{\tau} \cl T)$ (see \cite[Definition 2.13]{mt}) there exists a 
canonical unital completely positive map 
$\alpha: \cl S\otimes_{\tau}\cl T \to \cl R$, 
such that 
$$\alpha \circ (\phi_{n,\infty}\otimes 
\id\hspace{-0.05cm}\mbox{}_{\cl T}) = \gamma_n, \ \ \  n \in \bb N.$$

Similarly, the diagram 
\begin{center}
          \begin{tikzcd}[row sep=large, column sep=large] 
             \cl S_1\otimes_{\tau} \cl T_1  \arrow[d, "\id_{1}\otimes \psi_{1,\infty}"] \ar[r, "\phi_1 \otimes \psi_1"]  &  
             \cl S_2\otimes_{\tau} \cl T_2  \arrow[d, "\id_{2}\otimes \psi_{2,\infty}"] \ar[r, "\phi_2 \otimes \psi_2"] & 
             \cl S_3\otimes_{\tau} \cl T_3  \arrow[d, "\id_{3}\otimes \psi_{3,\infty}"] \ar[r] & 
             \cdots \\
             \cl S_1\otimes_{\tau} \cl T  \ar[r, "\phi_1 \otimes \id_{\cl T}"]  &  
             \cl S_2\otimes_{\tau} \cl T  \ar[r, "\phi_2 \otimes \id_{\cl T}"] & 
             \cl S_3\otimes_{\tau} \cl T  \ar[r] & 
             \cdots,
              \end{tikzcd}
               \end{center}
 yields a canonical unital completely positive map 
$$\beta : \cl R \to \cl S\otimes_{\tau}\cl T,$$ 
such that 
$$\beta \circ (\phi_{n}\otimes \psi_{n})_{\infty}= (\phi_{n,\infty}\otimes 
\id\hspace{-0.05cm}\mbox{}_{\cl T}) \circ (\id\hspace{-0.05cm}\mbox{}_{n}\otimes \psi_{n,\infty}), \ \ \  n \in \bb N.$$
We show that the maps $\alpha$ and $\beta$ are inverse to each other; 
indeed, 
\begin{align*}
\hspace{-1.3cm}
    \alpha\circ \beta \circ (\phi_{n}\otimes \psi_{n})_{\infty}
    & = \alpha \circ (\phi_{n,\infty}\otimes 
\id\hspace{-0.05cm}\mbox{}_{\cl T}) \circ (\id\hspace{-0.05cm}\mbox{}_{n}\otimes \psi_{n,\infty})\\
    &=\gamma_n \circ (\id\hspace{-0.05cm}\mbox{}_{n}\otimes \psi_{n,\infty})
     = (\phi_{n}\otimes \psi_{n})_{\infty} 
    \hspace{-0.1cm}\circ \hspace{-0.1cm}
    (\phi_{n,n} \otimes \id\hspace{-0.05cm}\mbox{}_{n})\\
    &= (\phi_{n}\otimes \psi_{n})_{\infty}
\end{align*}
for all $ n \in \bb N$. Hence, $ \alpha \circ \beta = \id $. On the other hand,
\begin{align*}
\hspace{-1.3cm}
\beta \circ \alpha \circ (\phi_{n,\infty}\otimes \psi_{k,\infty})
& = 
\beta \circ \alpha \circ (\phi_{n,\infty}\otimes \id\hspace{-0.05cm}\mbox{}_{\cl T}) \circ (\id\hspace{-0.05cm}\mbox{}_{n} \otimes \psi_{k,\infty})\\
& =\beta \circ \gamma_{n} \circ (\id\hspace{-0.05cm}\mbox{}_{n} \otimes \psi_{k,\infty})\\
    &= \beta\circ (\phi_{k} \otimes \psi_{k})_{\infty}\circ (\phi_{n,k} \otimes \id\hspace{-0.05cm}\mbox{}_{k})\\
    &= (\phi_{k,\infty}\otimes\id\hspace{-0.05cm}\mbox{}_{\cl T})
    \hspace{-0.1cm}\circ \hspace{-0.1cm}
    (\id\hspace{-0.05cm}\mbox{}_{k} \otimes \psi_{k,\infty}) 
    \hspace{-0.1cm}\circ \hspace{-0.1cm}
    (\phi_{n,k} \otimes \id\hspace{-0.05cm}\mbox{}_{k})\\
    &= \phi_{n,\infty}\otimes \psi_{k,\infty}
\end{align*}
for all $ n,k \in \bb N$ with $k\geq n$, showing that $\beta \circ \alpha = \id$. 
\end{proof}

For the formulation of the next theorem, we recall the notation (\ref{eq_cylinddd}) 
for the cylinders associated with elements $x\in X_n$ and $a\in A_n$; we 
employ similar notation for cylinders based on $y\in Y_n$ and $b\in B_n$.

\begin{theorem}  \label{th_ns} 
 Let $X = (X_n)_{n\in \bb{N}}$, $Y = (Y_n)_{n\in \bb{N}}$, 
$A = (A_n)_{n\in \bb{N}}$ and $B = (B_n)_{n\in \bb{N}}$ be inductive families 
of finite sets. 
\begin{enumerate}
    \item[(i)] If $\Gamma \in \cl C_{\rm ns}(X,Y,A,B)$ then there exists a state $s_{\Gamma}:\cl{S}_{X,A}\otimes_{\rm max}\cl{S}_{Y,B}\rightarrow \bb{C}$ such that 
     \begin{equation*}
s_{\Gamma}(\chi_{\tilde{x}\times\tilde{a}} \otimes \chi_{\tilde{y}\times\tilde{b}}) 
=
\lvert X_n\rvert \lvert Y_n\rvert \langle \chi_{\tilde{x}}\otimes \chi_{\tilde{y}},\Gamma(\chi_{\tilde{a}}\otimes \chi_{\tilde{b}})\rangle, 
    \end{equation*}
    for all $x\in X_n$, $y\in Y_n$, $a\in A_n$, $b\in B_n$, and all $n\in \bb{N}$.

\item[(ii)] If $s$ is a state on $\cl{S}_{X,A}\otimes_{\rm max} \cl{S}_{Y,B}$ 
    then there exists $\Gamma \in \cl C_{\rm ns}(X,Y,A,B)$
    such that $s = s_{\Gamma}$.
\end{enumerate}
\end{theorem}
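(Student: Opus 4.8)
The plan is to reduce everything to the finite-dimensional dictionary of Theorem \ref{th_orcha} by reassembling the finite levels through an inductive limit. The one genuinely structural input I would invoke is that the maximal operator system tensor product commutes with inductive limits; granting this, Lemma \ref{l_comminl} applied to the two defining inductive sequences (\ref{eq_opsind}) for $\cl S_{X,A}$ and $\cl S_{Y,B}$ yields a canonical complete order isomorphism
$$\cl S_{X,A} \otimes_{\max} \cl S_{Y,B} \;\cong\; \varinjlim\left(\cl S_{X_n,A_n}\otimes_{\max}\cl S_{Y_n,B_n}\right),$$
under which the canonical maps into the limit are $\gamma^{(n)}_{X,A}\otimes\gamma^{(n)}_{Y,B}$ and the connecting maps are $\gamma_{X_n,A_n}\otimes\gamma_{Y_n,B_n}$. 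Taking $\cl R=\bb C$ in the universal property of the operator system inductive limit, states on $\cl S_{X,A}\otimes_{\max}\cl S_{Y,B}$ are then in bijection with families $(s_n)_{n\in\bb N}$ of states $s_n:\cl S_{X_n,A_n}\otimes_{\max}\cl S_{Y_n,B_n}\to\bb C$ satisfying $s_{n+1}\circ(\gamma_{X_n,A_n}\otimes\gamma_{Y_n,B_n})=s_n$.

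For (i), I would start from $\Gamma\in\cl C_{\rm ns}(X,Y,A,B)$ and use Proposition \ref{p_gammans} to conclude that each member $\Gamma_n$ of the associated family lies in $\cl C_{\rm ns}(X_n,Y_n,A_n,B_n)$; Theorem \ref{th_orcha} then furnishes states $s_{\Gamma_n}$, and Lemma \ref{lem_ind_stat} provides exactly the compatibility $s_{\Gamma_{n+1}}\circ(\gamma_{X_n,A_n}\otimes\gamma_{Y_n,B_n})=s_{\Gamma_n}$. The bijection above produces a state $s_\Gamma$ with $s_\Gamma\circ(\gamma^{(n)}_{X,A}\otimes\gamma^{(n)}_{Y,B})=s_{\Gamma_n}$. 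To verify the explicit formula, I would use the identification $\gamma^{(n)}_{X,A}(e_{x,a})=\chi_{\tilde{x}\times\tilde{a}}$ from (\ref{eq_cylinddd}) to get $s_\Gamma(\chi_{\tilde{x}\times\tilde{a}}\otimes\chi_{\tilde{y}\times\tilde{b}})=s_{\Gamma_n}(e_{x,a}\otimes e_{y,b})=\lvert X_n\rvert\lvert Y_n\rvert\langle\delta_x\otimes\delta_y,\Gamma_n(\delta_a\otimes\delta_b)\rangle$, and then rewrite the right-hand side via the definition (\ref{Gamma_n}) of $\Gamma_n$ together with the adjoint relation between the conditional expectations $\cl E^\infty_{X_n},\cl E^\infty_{Y_n}$ and the trace-norm isometries $\iota^{(n)}_{X,1},\iota^{(n)}_{Y,1}$ (which send $\delta_x\mapsto\chi_{\tilde{x}}$, $\delta_y\mapsto\chi_{\tilde{y}}$), obtaining $\langle\delta_x\otimes\delta_y,\Gamma_n(\delta_a\otimes\delta_b)\rangle=\langle\chi_{\tilde{x}}\otimes\chi_{\tilde{y}},\Gamma(\chi_{\tilde{a}}\otimes\chi_{\tilde{b}})\rangle$.

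For (ii), I would run this in reverse. Given a state $s$, set $s_n:=s\circ(\gamma^{(n)}_{X,A}\otimes\gamma^{(n)}_{Y,B})$, which automatically satisfies the compatibility relation, and apply Theorem \ref{th_orcha} to obtain no-signalling correlations $\Gamma_n\in\cl C_{\rm ns}(X_n,Y_n,A_n,B_n)$; Lemma \ref{lem_ind_stat} shows the family $(\Gamma_n)_{n\in\bb N}$ is inductive. Viewing each $\Gamma_n$ as a channel $\cl D_{A_n\times B_n}\to\cl D_{X_n\times Y_n}$ over the product inductive families $A\times B=(A_n\times B_n)_n$ and $X\times Y=(X_n\times Y_n)_n$, and using $C(\Omega_{A\times B})=C(\Omega_A)\otimes C(\Omega_B)$ and $L^\infty(\Omega_{X\times Y})=L^\infty(\Omega_X)\bar{\otimes}L^\infty(\Omega_Y)$, I would check that the inductivity relation (\ref{eq_indrel}) is precisely (\ref{eq_inre}) for the product families, so that Theorem \ref{th_indcorresp}(ii) (with $H=\bb C$) produces a unique unital completely positive $\Gamma:C(\Omega_A)\otimes C(\Omega_B)\to L^\infty(\Omega_X)\bar{\otimes}L^\infty(\Omega_Y)$ whose associated family is $(\Gamma_n)$. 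Proposition \ref{p_gammans} certifies $\Gamma\in\cl C_{\rm ns}(X,Y,A,B)$, and applying part (i) to this $\Gamma$ recovers $s_{\Gamma_n}=s_n$ at every level; since the images of the maps $\gamma^{(n)}_{X,A}\otimes\gamma^{(n)}_{Y,B}$ span a dense subspace, $s_\Gamma=s$.

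The main obstacle I anticipate is twofold. First, the whole reduction hinges on the maximal tensor product commuting with inductive limits, so that Lemma \ref{l_comminl} genuinely reassembles the finite levels into $\cl S_{X,A}\otimes_{\max}\cl S_{Y,B}$; this is the single external fact the argument depends on and must be established (or cited) before the chaining begins. Second, the bookkeeping in the explicit formula is delicate: one must correctly track the normalising factors $\lvert X_n\rvert\lvert Y_n\rvert$ and apply the conditional-expectation/embedding duality in the right direction, since a mismatch there would spoil the asserted identity even though the state $s_\Gamma$ itself would still exist.
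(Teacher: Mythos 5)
Your proposal is correct and follows essentially the same route as the paper's proof: both directions reduce to the finite levels via Proposition \ref{p_gammans}, Theorem \ref{th_orcha} and Lemma \ref{lem_ind_stat}, and reassemble through Lemma \ref{l_comminl} and the universal property of the inductive limit for part (i), respectively Theorem \ref{th_indcorresp} (in the bipartite/product-family form, with $H=\bb{C}$) together with Proposition \ref{p_gammans} and a density argument for part (ii). The one external input you flag --- that $\otimes_{\max}$ commutes with operator system inductive limits --- is exactly what the paper supplies by citing \cite[Theorem 4.34]{mt}, so nothing is missing from your argument.
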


\begin{proof}
    (i) Let $\Gamma$ be no-signalling and, for each $n\in\bb{N}$, set 
    $$\Gamma_n:=(\cl{E}_{X_n}^{\infty} \otimes\cl{E}_{Y_n}^{\infty})\circ\Gamma\circ(\iota_A^{(n)}\otimes \iota_B^{(n)}).$$
    By Proposition \ref{p_gammans}, 
    $\Gamma_n$ is no-signalling and hence,
    by Theorem \ref{th_orcha}, there exist states $s_n:\cl{S}_{X_n,A_n}\otimes_{\rm max}\cl{S}_{Y_n,B_n}\rightarrow\bb{C}$, $n\in\bb{N}$, such that 
\begin{equation}\label{eq_snne}
s_n(e_{x,a}\otimes e_{y,b})=\lvert X_n\rvert\lvert Y_n\rvert\langle \delta_x\otimes\delta_y,\Gamma_n(\delta_a\otimes\delta_b)\rangle,\ \ n\in\bb{N},
\end{equation}
    for every $(x,y,a,b)\in X_n\times Y_n\times A_n\times B_n$. By Theorem \ref{th_indcorresp} and Lemma  \ref{lem_ind_stat}, $$s_{n+1}\circ(\gamma_{X_n,A_n}\otimes\gamma_{Y_n,B_n})=s_n,$$
    and therefore by the universal property of inductive limits, Lemma \ref{l_comminl} 
    and \cite[Theorem 4.34]{mt} there exists a state $s:\cl{S}_{X,A}\otimes_{\rm max}\cl{S}_{Y,B}\rightarrow\bb{C}$ such that $s\circ(\gamma_{X,A}^{(n)}\otimes\gamma_{Y,B}^{(n)})=s_n$, $n\in \bb{N}$. 
Using (\ref{eq_snne}), it follows that, if $x\in X_n$, $y\in Y_n$, $a\in A_n$ and $b\in B_n$ then 
\begin{eqnarray*}
&& 
\hspace{-1cm}
s(\chi_{\tilde{x}\times\tilde{a}} \otimes \chi_{\tilde{y}\times\tilde{b}})
=
 s_n(e_{x,a}\otimes e_{y,b})\\
&& \hspace{0.5cm} =
\lvert X_n\rvert\lvert Y_n\rvert \langle\delta_x\otimes\delta_y,(\cl{E}_{X_n}^{\infty}\otimes\cl{E}_{Y_n}^{\infty})\circ \Gamma\circ(\iota_A^{(n)}\otimes\iota_B^{(n)})(\delta_a\otimes\delta_b)\rangle \\
&& \hspace{0.5cm} = \lvert X_n\rvert\lvert Y_n\rvert \langle(\iota_X^{(n)}\otimes\iota_Y^{(n)})(\delta_x\otimes\delta_y),\Gamma(\chi_{\tilde{a}}\otimes \chi_{\tilde{b}})\rangle\\
&& \hspace{0.5cm} = 
\lvert X_n\rvert \lvert Y_n\rvert \langle \chi_{\tilde{x}}\otimes \chi_{\tilde{y}},\Gamma(\chi_{\tilde{a}}\otimes \chi_{\tilde{b}})\rangle.
\end{eqnarray*}

(ii) Setting $s_n = s\circ(\gamma_{X,A}^{(n)}\otimes\gamma_{Y,B}^{(n)})$,
$n\in \bb{N}$, we have that $s_n$ is a state on the tensor product $\cl{S}_{X_n,A_n}\otimes_{\rm max}\cl{S}_{Y_n,B_n}$ and consequently it gives 
rise, via Theorem \ref{th_orcha}, to a no-signalling correlation $ \Gamma_{n}$ over $(X_n,Y_n,A_n,B_n)$, $n \in \bb N$. Note that 
$s_{n+1}\circ(\gamma_{X_n,A_n}\otimes\gamma_{Y_n,B_n})=s_n$, $n\in\bb{N}$, and thus, by Lemma \ref{lem_ind_stat}, the family  $(\Gamma_n)_{n\in \bb N}$ is inductive. By Theorem \ref{th_indcorresp},  there exists a (unique) unital completely positive map $ \Gamma : C(\Omega_A) \otimes C(\Omega_B) \to L^\infty(\Omega_X)\bar{\otimes}L^\infty(\Omega_Y)$ that satisfies the relations
\begin{align*}(\cl{E}_{X_n}^{\infty}\otimes\cl{E}_{Y_n}^{\infty})\circ\Gamma\circ(\iota_{A}^{(n)}\otimes \iota_{B}^{(n)}) = \Gamma_n, \ \ \ n\in \bb N;
\end{align*}
by Proposition \ref{p_gammans}, $\Gamma$ is no-signalling.
If $x\in X_n$, $y\in Y_n$, $a\in A_n$ and $b\in B_n$, then 
\begin{eqnarray*}
s_{\Gamma}(\chi_{\tilde{x}\times\tilde{a}} \otimes \chi_{\tilde{y}\times\tilde{b}}) 
& = & 
\lvert X_n\rvert \lvert Y_n\rvert \langle \chi_{\tilde{x}}\otimes \chi_{\tilde{y}},\Gamma(\chi_{\tilde{a}}\otimes \chi_{\tilde{b}})\rangle \\
& = & 
\lvert X_n\rvert \lvert Y_n\rvert \langle \delta_x \otimes \delta_y, \Gamma_n(\delta_a\otimes \delta_b) \rangle = s_n(e_{x,a}\otimes e_{y,b})\\
& = & 
(s\circ(\gamma_{X,A}^{(n)}\otimes\gamma_{Y,B}^{(n)}))(e_{x,a}\otimes e_{y,b})
= s(\chi_{\tilde{x}\times\tilde{a}} \otimes \chi_{\tilde{y}\times\tilde{b}})
\end{eqnarray*}
and, since the elements 
$\chi_{\tilde{x}\times\tilde{a}} \otimes \chi_{\tilde{y}\times\tilde{b}}$, when $n$ varies,
form a generating set 
for $\cl S_{X,A}\otimes_{\max}\cl S_{Y,B}$, we have that $s_\Gamma = s$.
\end{proof}


\section{The type hierarchy}\label{s_oct}

In this section, we consider other types of correlations over Cantor spaces, that lie within the 
class of all no-signalling correlations defined in Section \ref{s_rvs},
and obtain corresponding operator algebraic descriptions. 
We require some preparations; in the next subsection, we 
develop the bipartite versions of operator-valued channels from Subsection \ref{ss_genb}
that will be needed in the sequel.


\subsection{Bipartite operator-valued channels}\label{ss_bipovch}

Let $\frak{X}$, $\frak{Y}$, $\frak{S}$ and $\frak{T}$ be second countable compact Hausdorff spaces,
$\mu\in M(\frak X)$ and $\nu\in M(\frak Y)$ be probability measures, and $H$ be a Hilbert space.
Given $E\in\frak{C}_\mu(\frak S,\frak X;H)$ and $F\in\frak{C}_\nu(\frak T, \frak Y;H)$, 
and denoting by $\frak{f}$ the flip between the first and the second tensor terms in the three-leg 
expressions below, 
we let 
$$\phi_E:C(\frak S)\rightarrow L^\infty(\frak X,\mu)\bar{\otimes}L^\infty(\frak Y,\nu)\bar{\otimes}\cl{B}(H)$$
and 
$$\phi_F:C(\frak T)\rightarrow L^\infty(\frak X,\mu)\bar{\otimes}L^\infty(\frak Y,\nu)\bar{\otimes}\cl{B}(H)$$
be the maps, defined by setting
$$\phi_E(f) = \frak{f}(1_{\frak Y} \otimes \Phi_E(f))
\ \mbox{ and } \ 
\phi_F(g) = 1_{\frak X} \otimes \Phi_F(g).$$
We say that $E$ and $F$ form a commuting pair if
$\phi_E$ and $\phi_F$ have commuting ranges.

\begin{theorem}\label{cqc_prod}
Let $E\in\frak{C}_\mu(\frak S,\frak X;H)$ and $F\in\frak{C}_\nu(\frak T,\frak Y;H)$ 
be operator-valued channels that form a commuting pair. 
Then there exists a unique, 
up to $\sim_{\mu\times \nu}$-equivalence, channel
$E\cdot F\in\frak{C}_{\mu\times\nu}(\frak S\times \frak T,\frak X\times \frak Y;H)$ 
such that     
\begin{align}\label{prodform}
        (E\cdot F)(\alpha\times\beta|x,y)=E(\alpha|x)F(\beta|y)\hspace{0.3cm} \mu\times\nu \text{-a.e.},  \ \ 
        \alpha\in\frak{B}_{\frak S}, \beta\in\frak{B}_{\frak T}.
    \end{align}
\end{theorem}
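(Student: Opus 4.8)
My plan is to reduce the statement to the channel--map correspondence of Theorem \ref{mod_mu}, applied over the product spaces $\frak X\times\frak Y$ (with probability measure $\mu\times\nu$) and $\frak S\times\frak T$, which are again second countable compact Hausdorff. Writing $M := L^\infty(\frak X,\mu)\bar{\otimes}L^\infty(\frak Y,\nu)\bar{\otimes}\cl B(H)$ and using $L^\infty(\frak X\times\frak Y,\mu\times\nu) = L^\infty(\frak X,\mu)\bar{\otimes}L^\infty(\frak Y,\nu)$, one sees that $M = L^\infty_\sigma(\frak X\times\frak Y,\mu\times\nu,\cl B(H))$ is precisely the codomain occurring in Theorem \ref{mod_mu} over $\frak X\times\frak Y$. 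Thus it suffices to produce a single unital completely positive map $\Phi : C(\frak S\times\frak T)\to M$ with $\Phi(f\otimes g) = \phi_E(f)\phi_F(g)$, for then $E\cdot F$ and formula (\ref{prodform}) can both be read off from $\Phi$.

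First I would construct $\Phi$. Fixing a faithful normal representation $M\subseteq\cl B(L)$, I regard $\phi_E,\phi_F$ as unital completely positive maps of $C(\frak S),C(\frak T)$ into $\cl B(L)$; by hypothesis they form a commuting pair, so the defining property of the commuting tensor product gives a unital completely positive map $\phi_E\cdot\phi_F : C(\frak S)\otimes_{\rm c}C(\frak T)\to\cl B(L)$ with $(\phi_E\cdot\phi_F)(f\otimes g) = \phi_E(f)\phi_F(g)$. Since $C(\frak S)$ is a commutative, hence nuclear, C*-algebra, $\otimes_{\rm c}$ coincides with $\otimes_{\min}$ on $C(\frak S)\otimes C(\frak T)$, and the latter is $C(\frak S\times\frak T)$; so $\Phi := \phi_E\cdot\phi_F$ is defined on $C(\frak S\times\frak T)$. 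On the algebraic tensor product its values $\sum_i\phi_E(f_i)\phi_F(g_i)$ lie in the norm-closed algebra $M$, and by norm-continuity of $\Phi$ and density of $C(\frak S)\otimes C(\frak T)$ in $C(\frak S\times\frak T)$ the whole range of $\Phi$ lies in $M$. Theorem \ref{mod_mu} then yields a channel $E\cdot F\in\frak C_{\mu\times\nu}(\frak S\times\frak T,\frak X\times\frak Y;H)$, unique up to $\sim_{\mu\times\nu}$, with $\Phi_{E\cdot F} = \Phi$.

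Next I would verify (\ref{prodform}). By Theorem \ref{mod_mu} and the definitions of $\phi_E,\phi_F$, for continuous $f,g$ and $\xi,\eta\in H$ one has, $\mu\times\nu$-a.e.,
\begin{equation*}
\int_{\frak S\times\frak T} f(a)g(c)\,d(E\cdot F)_{\xi,\eta}((a,c)|x,y) = \langle\Phi_E(f)(x)\Phi_F(g)(y)\xi,\eta\rangle .
\end{equation*}
Exploiting separability of $C(\frak S),C(\frak T)$ and $H$, I would fix the null exceptional set on countable dense families and extend by continuity, so that for a.e.\ $(x,y)$ this identity holds for all continuous $f,g$ and all $\xi,\eta$. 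For such $(x,y)$, set $\rho := (E\cdot F)_{\xi,\eta}(\cdot|x,y)$; fixing $g$ and $\zeta := \Phi_F(g)(y)\xi$, the right-hand side is $\int_{\frak S} f\,dE_{\zeta,\eta}(\cdot|x)$, so the measure $\alpha\mapsto\int_{\alpha\times\frak T} g\,d\rho$ on $\frak S$ equals $E_{\zeta,\eta}(\cdot|x)$, giving $\int_{\frak T} g\,d\rho(\alpha\times\cdot) = \langle E(\alpha|x)\Phi_F(g)(y)\xi,\eta\rangle$ for every Borel $\alpha$. Repeating the argument in the second variable (fixing $\alpha$, varying $g$) produces $\rho(\alpha\times\beta) = \langle E(\alpha|x)F(\beta|y)\xi,\eta\rangle$ for all Borel $\alpha,\beta$; letting $\xi,\eta$ run over a dense set yields $(E\cdot F)(\alpha\times\beta|x,y) = E(\alpha|x)F(\beta|y)$ $\mu\times\nu$-a.e.

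Uniqueness is then inherited from Theorem \ref{mod_mu}: any channel $G$ obeying (\ref{prodform}) has, by running the Fubini computation above in reverse, $\Phi_G(f\otimes g) = \phi_E(f)\phi_F(g) = \Phi(f\otimes g)$ on elementary tensors, hence $\Phi_G = \Phi$ and $G\sim_{\mu\times\nu}E\cdot F$. I expect the main obstacle to be the measure-theoretic bookkeeping of the last step: controlling the $\mu\times\nu$-null exceptional sets uniformly across $f,g,\xi,\eta$ through separability, and carrying out the two-step disintegration that upgrades the identity for continuous integrands to the rectangle formula. By contrast, the commuting-pair hypothesis enters only to secure complete positivity of $\Phi$ via the $\otimes_{\rm c}$ universal property; positivity of the product values $E(\alpha|x)F(\beta|y)$ is automatic once $E\cdot F$ is known to be a genuine channel.
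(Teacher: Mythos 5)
Your proposal is correct and follows essentially the same route as the paper: build the product map $\phi_E\cdot\phi_F$ on $C(\frak S)\otimes C(\frak T)\cong C(\frak S\times\frak T)$ from the commuting-pair universal property (the paper cites \cite[Theorem 12.8]{Pa}, which amounts to your $\otimes_{\rm c}$-plus-nuclearity step), apply Theorem \ref{mod_mu} over the product spaces to obtain $E\cdot F$, and derive (\ref{prodform}) by approximation. Your separability/disintegration argument simply writes out in detail the step the paper outsources to the approximation arguments of \cite[Lemma 3.1]{btt}, and your uniqueness argument matches the uniqueness clause of Theorem \ref{mod_mu}.
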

\begin{proof}
Since $\phi_E$ and $\phi_F$ are unital completely positive maps with commuting ranges,  
by \cite[Theorem 12.8]{Pa}, there exists a unique unital completely positive map 
$$\phi_E\cdot\phi_F:C(\frak S)\otimes C(\frak T)\rightarrow L^\infty(\frak X\times \frak Y,\mu\times\nu)\bar\otimes\cl{B}(H),$$
such that 
$$(\phi_E\cdot\phi_F)(f\otimes g)=\phi_E(f)\phi_F(g), 
\ \ \ f\in C(\frak S), g\in C(\frak T).$$
Noting the canonical identification $C(\frak S\times \frak T)\cong C(\frak S)\otimes C(\frak T)$, 
we consider 
$\phi_E\cdot\phi_F$ as a map from 
$C(\frak S\times \frak T)$ into $L^\infty(\frak X\times \frak Y,\mu\times\nu)\bar\otimes\cl{B}(H)$. 
By Theorem \ref{mod_mu}, there exists a unique $E\cdot F\in\frak{C}_{\mu\times\nu}(\frak S\times \frak T,\frak X\times \frak Y;H)$ such that, for any $h\in C(\frak S\times \frak T)$ and $\xi,\eta\in H$, we have
$$\langle (\phi_E\cdot\phi_F)(h)(x,y)\xi,\eta\rangle=\int_{\frak S\times \frak T}h(a,b)d(E\cdot F)_{\xi,\eta}(a,b|x,y)\hspace{0.4cm} \mu\times\nu \text{-a.e.}$$
Applying approximation arguments similar to those in the proof of 
\cite[Lemma 3.1]{btt}, we obtain (\ref{prodform}).
\end{proof}

Theorem \ref{cqc_prod} easily yields the following corollary; 
the detailed proof is omitted.

\begin{corollary}
    Let $H$ and $K$ be Hilbert spaces, 
    $E\in\frak{C}_\mu(\frak S,\frak X;H)$ and $F\in\frak{C}_\nu(\frak T,\frak Y;$ $K)$. 
    Then there exists $E\otimes F\in\frak{C}_{\mu\times\nu}(\frak S\times \frak T, \frak X\times \frak Y;H\otimes K)$ such that, for all 
    $ \alpha\in\frak{B}_{\frak S}$ and $\beta\in\frak{B}_{\frak T}$, we have that 
    $$(E\otimes F)(\alpha\times\beta|x,y)=E(\alpha|x)\otimes F(\beta|y)\hspace{0.2cm} \mu\times\nu\text{-almost
    everywhere}.$$
\end{corollary}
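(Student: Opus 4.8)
The plan is to reduce the statement to Theorem~\ref{cqc_prod} by ampliating the two channels so that they act on the common Hilbert space $H\otimes K$ and form a commuting pair. Tensoring with an identity operator turns $E$ into a channel whose values lie in $\cl B(H)\otimes I_K$ and $F$ into one whose values lie in $I_H\otimes\cl B(K)$; since operators of these two forms always commute, the ampliated channels automatically satisfy the commuting-pair hypothesis, and the channel $\tilde E\cdot\tilde F$ produced by Theorem~\ref{cqc_prod} has precisely the required elementary-tensor values.

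Concretely, I would first set
$$\tilde E(\alpha|x) := E(\alpha|x)\otimes I_K \quad\text{and}\quad \tilde F(\beta|y) := I_H\otimes F(\beta|y),$$
for $\alpha\in\frak B_{\frak S}$, $\beta\in\frak B_{\frak T}$, $x\in\frak X$, $y\in\frak Y$, and check that $\tilde E\in\frak C_\mu(\frak S,\frak X;H\otimes K)$ and $\tilde F\in\frak C_\nu(\frak T,\frak Y;H\otimes K)$. Each value is positive, $\tilde E(\frak S|x)=I_{H\otimes K}=\tilde F(\frak T|y)$, and $\sigma$-additivity in the strong operator topology is preserved under ampliation by $I_K$ (resp.\ $I_H$). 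Weak $\mu$-(resp.\ $\nu$-)measurability of $x\mapsto\tilde E(\alpha|x)$ (resp.\ $y\mapsto\tilde F(\beta|y)$) follows from that of $E$ (resp.\ $F$) by testing against elementary tensors $\xi\otimes\xi'$, $\eta\otimes\eta'$ (with $\xi,\eta\in H$, $\xi',\eta'\in K$), using $\langle(E(\alpha|x)\otimes I_K)(\xi\otimes\xi'),\eta\otimes\eta'\rangle=\langle E(\alpha|x)\xi,\eta\rangle\langle\xi',\eta'\rangle$ and extending by linearity and density.

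Next I would verify that $\tilde E$ and $\tilde F$ form a commuting pair in the sense preceding Theorem~\ref{cqc_prod}. The maps associated via Theorem~\ref{mod_mu} satisfy $\Phi_{\tilde E}(f)(x)=\Phi_E(f)(x)\otimes I_K$ and $\Phi_{\tilde F}(g)(y)=I_H\otimes\Phi_F(g)(y)$, so the three-leg maps $\phi_{\tilde E}$ and $\phi_{\tilde F}$ into $L^\infty(\frak X,\mu)\bar\otimes L^\infty(\frak Y,\nu)\bar\otimes\cl B(H\otimes K)$ have ranges supported, among the $L^\infty$ factors, on the complementary $\frak X$- and $\frak Y$-legs respectively (and hence commute there), while their $\cl B(H\otimes K)$-legs take values in $\cl B(H)\otimes I_K$ and $I_H\otimes\cl B(K)$, which commute as well. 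Thus $\phi_{\tilde E}$ and $\phi_{\tilde F}$ have commuting ranges.

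Finally I would apply Theorem~\ref{cqc_prod} to the commuting pair $(\tilde E,\tilde F)$ to obtain $\tilde E\cdot\tilde F\in\frak C_{\mu\times\nu}(\frak S\times\frak T,\frak X\times\frak Y;H\otimes K)$ with
$$(\tilde E\cdot\tilde F)(\alpha\times\beta|x,y)=\tilde E(\alpha|x)\tilde F(\beta|y)=(E(\alpha|x)\otimes I_K)(I_H\otimes F(\beta|y))=E(\alpha|x)\otimes F(\beta|y)$$
$\mu\times\nu$-almost everywhere, and set $E\otimes F:=\tilde E\cdot\tilde F$. The only point requiring genuine care --- and the likely main obstacle --- is matching the ampliation construction to the precise definition of a commuting pair (the flip $\frak f$ and the placement of the $L^\infty$ factors), rather than any analytic difficulty; everything else is routine verification that ampliation by an identity preserves the defining properties of an operator-valued channel.
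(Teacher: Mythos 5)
Your proof is correct and is exactly the argument the paper has in mind: the corollary is stated as an easy consequence of Theorem \ref{cqc_prod} (with the detailed proof omitted), obtained by ampliating $E$ and $F$ to $\tilde E(\cdot|x)=E(\cdot|x)\otimes I_K$ and $\tilde F(\cdot|y)=I_H\otimes F(\cdot|y)$, which automatically form a commuting pair since $\cl B(H)\otimes I_K$ and $I_H\otimes \cl B(K)$ commute. Your verifications of the QPM axioms, weak measurability, and the commuting-range condition for the three-leg maps $\phi_{\tilde E}$, $\phi_{\tilde F}$ supply precisely the routine details the paper leaves to the reader.
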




\begin{remark}\label{r_appuni}
\rm 
We fix inductive families of sets $X=(X_n)_{n\in\bb{N}}$, $Y=(Y_n)_{n\in\bb{N}}$, $A=(A_n)_{n\in\bb{N}}$ and $B=(B_n)_{n\in\bb{N}}$ and let $\Omega_X$, $\Omega_Y$, $\Omega_A$ and $\Omega_B$ 
be their respective Cantor spaces. 
Given $E\in\frak{C}_{\mu_X}(\Omega_A,\Omega_X;H)$ and $n\in\bb{N}$, we denote by $E_n\in\frak{C}(A_n,X_n;H)$ the 
($\cl{B}(H)$-valued) information channel (from $X_n$ to $A_n$) 
for which the equality
\begin{equation}\label{rel_ovic_fin}
\Phi_{E_n} = \tilde{\cl{E}}^\infty_{X_n}\circ\Phi_E\circ\iota^{(n)}_{A,\infty}
 \end{equation} 
is satisfied, $n\in \bb{N}$.
Set
$$\Psi_n = \tilde{\iota}_{X,\infty}^{(n)}\circ\Phi_{E_n}\circ\cl{E}^\infty_{A_n}|_{C(\Omega_A)},\ \ n\in\bb{N};$$
thus, $(\Psi_n)_{n\in\bb{N}}\subseteq\operatorname{UCP}(C(\Omega_A),L^\infty(\Omega_X)\bar\otimes\cl{B}(H))$. 
We have that $\lim_{n\to\infty} \Psi_n = \Phi_E$ in the BW topology.
Indeed, note that 
$$\lim_{n\to\infty} \tilde{\iota}_{X,\infty}^{(n)}\circ\tilde{\cl{E}}^\infty_{X_{n}}=\id\hspace{-0.05cm}\mbox{}_{L^\infty(\Omega_X)\bar\otimes\cl{B}(H)}$$ 
in the BW topology, and 
$$\lim_{n\to\infty}\iota^{(n)}_{A,\infty}\circ
\cl{E}^\infty_{A_{n}}|_{C(\Omega_A)}
= \id\hspace{-0.05cm}\mbox{}_{C(\Omega_A)}$$ 
in the point-norm topology.
Fix $\omega \in \cl B(H)_*$ with $\|\omega\|_1\leq 1$, $\epsilon > 0$ and $f\in C(\Omega_A)$, and 
let $N\in \bb{N}$ be such that 
$$\left\|(\iota^{(n)}_{A,\infty}\circ\cl{E}^\infty_{A_{n}})(f) - f\right\| < \frac{\epsilon}{2}$$
and 
$$\left|\langle \omega,(\tilde{\iota}_{X,\infty}^{(n)}\circ\tilde{\cl{E}}^\infty_{X_{n}}\circ \Phi_E)(f)\rangle - \langle \omega, \Phi_E(f)\rangle\right| < \frac{\epsilon}{2}$$
whenever $n \geq N$. 
Then 
\begin{eqnarray*}
&& \left|\langle \omega, \Psi_n(f)\rangle \hspace{-0.05cm} - \hspace{-0.05cm}
\langle \omega, \Phi_E(f)\rangle\right|\\
&& \leq 
\left|\langle \omega, \Psi_n(f)\rangle \hspace{-0.05cm} - \hspace{-0.05cm}
\langle \omega, (\tilde{\iota}_{X,\infty}^{(n)}\hspace{-0.03cm}\circ\hspace{-0.03cm}\tilde{\cl{E}}^\infty_{X_{n}}\hspace{-0.03cm}\circ \hspace{-0.03cm}\Phi_E)(f)\rangle\right|\\
&&\hspace{1cm} +  
\left|\langle \omega, (\tilde{\iota}_{X,\infty}^{(n)}\hspace{-0.03cm}\circ\hspace{-0.03cm}\tilde{\cl{E}}^\infty_{X_{n}}\hspace{-0.03cm}\circ \hspace{-0.03cm}\Phi_E)(f)\rangle
\hspace{-0.05cm} - \hspace{-0.05cm} \langle \omega, \Phi_E(f)\rangle \right|\\
&& =  
\left|\langle \omega, 
(\tilde{\iota}_{X,\infty}^{(n)}\circ
\tilde{\cl{E}}^\infty_{X_n}\circ\Phi_E\circ\iota^{(n)}_{A,\infty}
\circ\cl{E}^\infty_{A_n})(f)
\rangle \hspace{-0.05cm} - \hspace{-0.05cm}
\langle \omega, (\tilde{\iota}_{X,\infty}^{(n)}\hspace{-0.03cm}\circ\hspace{-0.03cm}\tilde{\cl{E}}^\infty_{X_{n}}\hspace{-0.03cm}\circ \hspace{-0.03cm}\Phi_E)(f)\rangle\right|\\
&& \hspace{1cm} +  
\left|\langle \omega, (\tilde{\iota}_{X,\infty}^{(n)}\hspace{-0.03cm}\circ\hspace{-0.03cm}\tilde{\cl{E}}^\infty_{X_{n}}\hspace{-0.03cm}\circ \hspace{-0.03cm}\Phi_E)(f)\rangle
\hspace{-0.05cm} - \hspace{-0.05cm} \langle \omega, \Phi_E(f)\rangle \right|
\leq \epsilon
\end{eqnarray*}
for every $n\geq N$. 
\end{remark}

\begin{remark}\label{r_ovic_fin_c} 
   \rm The channels
   $E\in\frak{C}_{\mu_X}(\Omega_A,\Omega_X;H)$ and $F\in\frak{C}_{\mu_Y}(\Omega_B,\Omega_Y;H)$
    form a commuting pair if and only if the (finite) channels $E_n$ and $F_m$ obtained via equation (\ref{rel_ovic_fin}) from $E$ and $F$, respectively, also do so, for all $n$ and $m$. 
Indeed, assume that $(E,F)$ is a commuting pair, then the channels 
$ \Phi_{E}\circ \iota^{(n)}_{A}$, $ \Phi_{F}\circ \iota^{(m)}_{B}$ 
form a commuting pair and we can write
$$\phi_{E_{n}}(f) = (\cl E^{\infty}_{X_{n}}\otimes 1_{Y_m} 
\otimes \id\hspace{-0.05cm}\mbox{}_{\cl B(H)}) ( \Phi_{E}( \iota^{(n)}_{A}(f))_{1,3} )$$
and 
$$\phi_{F_{m}}(g) = (1_{X_n}\otimes\cl E^{\infty}_{Y_{m}} \otimes \id\hspace{-0.05cm}\mbox{}_{\cl B(H)})(\Phi_{F}(\iota^{(m)}_{B}(g))_{2,3}),$$ 
from which the statement follows immediately
(in the displayed equations, we have used standard leg notation). 

On the other hand, assume that $ \Phi_{E_n}$ and $ \Phi_{F_m}$ form a commuting pair for every $n, m\in \bb N$. Then the maps $ \tilde{\iota}^{(n)}_{X,\infty}\circ \Phi_{E_n} \circ \cl E_{A_n}|_{C(\Omega_A)}$ and $ \tilde{\iota}^{(m)}_{Y,\infty}\circ \Phi_{F_m} \circ \cl E_{B_n}|_{C(\Omega_B)}$ also form a commuting pair. By Remark \ref{r_appuni} the latter unital completely positive maps converge to $ \Phi_{E}$ and $ \Phi_{F}$ respectively in the BW topology. By taking iterated limits we conclude that $\Phi_{E}$ and $\Phi_{F}$ have commuting ranges.
\end{remark}

\begin{lemma} \label{lem:commu_corresp} 
Let $\Gamma^1:\cl S_{X,A}\to\cl B(H)$ and $\Gamma^2:\cl S_{Y,B}\to\cl B(H)$ be unital completely positive maps, and let
\[
\Phi^1: C(\Omega_A)\to L^{\infty}(\Omega_X)\bar{\otimes}\cl B(H),\qquad
\Phi^2: C(\Omega_B)\to L^{\infty}(\Omega_Y)\bar{\otimes}\cl B(H)
\]
be their corresponding channels ariding via Theorem~\ref{c_statSXA}.
Then $ \Gamma^1 $ and $ \Gamma^2$ form a commuting pair if and only if $ \Phi^{1}$ and $ \Phi^{2}$ do so.

\begin{proof}
We work at finite levels and pass to the limit. For $n,m$, let
\[
\Gamma^1_n:\cl S_{X_n,A_n}\to\cl B(H),\quad
\Gamma^2_m:\cl S_{Y_m,B_m}\to\cl B(H),
\]
and the corresponding
\[
\Phi^1_n:\cl D_{A_n}\to\cl D_{X_n}\otimes\cl B(H),\qquad
\Phi^2_m:\cl D_{B_m}\to\cl D_{Y_m}\otimes\cl B(H),
\]
with
\[
\Phi^1_n(\delta_a)=\sum_{x\in X_n}\delta_x\otimes\Gamma^1_n(e_{x,a}),\qquad
\Phi^2_m(\delta_b)=\sum_{y\in Y_m}\delta_y\otimes\Gamma^2_m(e_{y,b}).
\]
We compute
\begin{equation}\label{eq:comm-sum}
\big[(\Phi^{1}_{n}(\delta_a))_{1,3},\,(\Phi^{2}_{m}(\delta_b))_{2,3}\big]
=\sum_{x\in X_n}\sum_{y\in Y_m}\delta_x\otimes\delta_y\otimes
\big[\Gamma^1_n(e_{x,a}),\,\Gamma^2_m(e_{y,b})\big].
\end{equation}

If $ \Gamma^1$ and $\Gamma^2$ have commuting ranges then $[\Gamma^1_n(e_{x,a}),\Gamma^2_m(e_{y,b})]=0$ for all $x,y,a,b$, and
\eqref{eq:comm-sum} vanishes on generators; by linearity $ \Phi_{n}^{1}$ and $ \Phi^2_{m}$ have commuting ranges for all $ n,m\in \bb N$. By Remark \ref{r_ovic_fin_c} $ \Phi^1$ and $ \Phi^2$ have commuting ranges.

 If $\Phi^{1}$ and $ \Phi^{2}$ form a commuting pair, by Remark \ref{r_ovic_fin_c}, $[(\Phi^{1}_{n}(\delta_a))_{1,3},\,(\Phi^{2}_{m}(\delta_b))_{2,3}]=0$, apply the slice
$L_{\delta_x\otimes\delta_y}$ to \eqref{eq:comm-sum} to obtain
$[\Gamma^1_n(e_{x,a}),\Gamma^2_m(e_{y,b})]=0$ for all $x,y,a,b$; by linearity this yields
$[\Gamma^1_n(u),\Gamma^2_m(v)]=0$ for all $u,v$ and hence by density $\Gamma^1$ and $\Gamma^2$ have commuting ranges.

\end{proof}
\end{lemma}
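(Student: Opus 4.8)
The plan is to reduce the equivalence to the finite-level maps and then transfer the commuting condition across the inductive limit using the bipartite channel machinery of Subsection~\ref{ss_bipovch}. By Theorem~\ref{c_statSXA}, each $\Gamma^i$ corresponds to $\Phi^i$, and by the proof of that theorem this correspondence is assembled from the finite-level maps $\Gamma^1_n : \cl S_{X_n,A_n} \to \cl B(H)$ and $\Gamma^2_m : \cl S_{Y_m,B_m} \to \cl B(H)$, which in turn determine the finite channels $\Phi^1_n : \cl D_{A_n} \to \cl D_{X_n} \otimes \cl B(H)$ and $\Phi^2_m : \cl D_{B_m} \to \cl D_{Y_m} \otimes \cl B(H)$ through the slice formula $\Phi^1_n(\delta_a) = \sum_{x \in X_n} \delta_x \otimes \Gamma^1_n(e_{x,a})$ (and symmetrically for $\Phi^2_m$). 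So I would first record these finite data and then argue level by level.

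The computational core is the commutator identity
\[
\big[(\Phi^1_n(\delta_a))_{1,3},\, (\Phi^2_m(\delta_b))_{2,3}\big] = \sum_{x \in X_n}\sum_{y \in Y_m} \delta_x \otimes \delta_y \otimes \big[\Gamma^1_n(e_{x,a}),\, \Gamma^2_m(e_{y,b})\big],
\]
obtained by inserting the slice formulas and expanding in the standard leg notation. Since the elementary tensors $\delta_x \otimes \delta_y$ are linearly independent in $\cl D_{X_n} \otimes \cl D_{Y_m}$, this expression vanishes if and only if every operator commutator $[\Gamma^1_n(e_{x,a}), \Gamma^2_m(e_{y,b})]$ is zero. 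This identity is the pivot converting commuting ranges of the finite channels into commuting values of the finite $\Gamma$-maps on generators, and conversely.

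For the forward direction, if $\Gamma^1$ and $\Gamma^2$ have commuting ranges then, since $\Gamma^i_n$ factors through $\Gamma^i$, the generator images $\Gamma^1_n(e_{x,a})$ and $\Gamma^2_m(e_{y,b})$ commute, so by the identity each pair $\Phi^1_n, \Phi^2_m$ has commuting ranges for all $n,m$; Remark~\ref{r_ovic_fin_c} then lifts this to commuting ranges for $\Phi^1$ and $\Phi^2$. For the converse, if $\Phi^1$ and $\Phi^2$ commute then Remark~\ref{r_ovic_fin_c} gives $[(\Phi^1_n(\delta_a))_{1,3}, (\Phi^2_m(\delta_b))_{2,3}] = 0$ for all $n,m$; applying the slice $L_{\delta_x \otimes \delta_y}$ to the identity extracts $[\Gamma^1_n(e_{x,a}), \Gamma^2_m(e_{y,b})] = 0$, and since the $e_{x,a}$ generate $\cl S_{X_n,A_n}$ this yields $[\Gamma^1_n(u), \Gamma^2_m(v)] = 0$ for all $u,v$ by linearity. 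Finally, density of the union of the images $\gamma^{(n)}_{X,A}(\cl S_{X_n,A_n})$ in $\cl S_{X,A}$ (and symmetrically) upgrades this to commuting ranges for $\Gamma^1$ and $\Gamma^2$.

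The step I expect to be the main obstacle is precisely the passage between finite and infinite levels: commuting ranges is not automatically preserved under BW limits, so neither direction can be obtained by naively taking limits of the finite statements. The resolution is exactly Remark~\ref{r_ovic_fin_c}, which—via the BW-convergence $\Psi_n \to \Phi_E$ of Remark~\ref{r_appuni} together with an iterated-limit argument—establishes that $\Phi^1, \Phi^2$ commute if and only if all finite $\Phi^1_n, \Phi^2_m$ do. Once that equivalence is in hand, everything else is linear algebra on generators and a routine density argument.
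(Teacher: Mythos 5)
Your proposal is correct and follows essentially the same route as the paper's proof: the same finite-level reduction via the slice formula, the same commutator identity pivoting between commuting ranges of $\Phi^1_n,\Phi^2_m$ and generator commutators of $\Gamma^1_n,\Gamma^2_m$, the same appeal to Remark \ref{r_ovic_fin_c} in both directions, and the same linearity-plus-density conclusion. You also correctly identify the finite-to-infinite transfer as the crux and resolve it exactly as the paper does.
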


\subsection{An ultraproduct channel construction} \label{subseq:ultraprod}
We collect some details about ultraproducts that we will need in the sequel, 
and refer the reader to \cite{AndoHaagerup} for further background.
Fix a free ultrafilter $\omega$ on $\mathbb N$.
For a sequence $(\cl X_n)$ of Banach spaces, set
$\ell^\infty(\cl X_n)=\{(x_n):\sup_n\|x_n\|<\infty\}$ and
$\mathcal N_\omega=\{(x_n):\lim_\omega\|x_n\|=0\}$.
Then the space $(\cl X_n)^\omega:=\ell^\infty(\cl X_n)/\mathcal N_\omega$, endowed with the norm
$\|[x_n]\|=\lim_\omega\|x_n\|$ (where $[x_n]$ denotes the coset 
containing the sequence $(x_n)_{n\in \bb{N}}$), 
is a Banach space, called the \textit{Banach space ultraproduct} of $(\cl X_n)$. 
 For a sequence $(H_n)_{n\in \bb{N}}$ of Hilbert spaces, the
Banach space ultraproduct $H^\omega$ is a Hilbert space when endowed with the inner product
$\langle [x_n],[y_n]\rangle=\lim_\omega\langle x_n,y_n\rangle$. 
For a sequence $(\cl M_n)_{n\in \bb{N}}$ of C*-algebras, the ultraproduct $ (\cl M_n)^{\omega}$ is again a C*-algebra when equipped with the
pointwise multiplication and involution of sequences.  
If $(T_n)_{n\in \bb{N}}$ is a uniformly bounded sequence, 
where $T_n\in\mathcal B(H_n)$, the formula
$\pi_\omega([T_n])[x_n]=[T_nx_n]$ defines an isometric *-homomorphism
$\pi_\omega:(\mathcal B(H_n))^\omega\to \mathcal B(H^\omega)$. For simplicity we will write $ [T_{n}]_{\omega}$ for $ \pi_{\omega}([T_n])$.


Let $\cl M_n\subseteq\mathcal B(H_n)$ be von Neumann algebras, $n \in \bb N$.
Write $\pi_\omega:(\cl M_n)^\omega\to \mathcal B(H^\omega)$ for the canonical representation induced on the
ultraproduct Hilbert space $H^\omega$. The \emph{abstract ultraproduct} {\cite[Definition 3.5]{AndoHaagerup}}
\[ 
\prod^\omega(\cl M_n,H_n) := \overline{\pi_\omega((\cl M_n)^\omega)}^{\rm SOT} \subseteq \mathcal B(H^\omega)
\]
is the strong-operator closure of $\pi_\omega((\cl M_n)^\omega)$.
In particular, when $\cl M_n=\mathcal B(H_n)$ one has 
\( 
\prod^\omega(\mathcal B(H_n),H_n)=\mathcal B(H^\omega)
\) (see \cite[Lemma 3.4]{AndoHaagerup}).

\begin{lemma} \label{lem:ultraprod_ucp} 
Let $\cl S$ be an operator system, $H_n$ be a Hilbert space, $n\in \bb{N}$, and $\Phi_n:\cl S\to\cl B(H_n)$ unital completely positive  maps for every $n\in\bb N$. Then the map
\[
\Phi^\omega:\cl S\longrightarrow \cl B(H^\omega),
\ \mbox{ given by } \ 
\Phi^\omega(s):=\pi_\omega\big([\Phi_n(s)]\big),
\]
is unital and completely positive.

Moreover, if $\cl T$ is another operator system and $\Psi_n:\cl T\to\cl B(H_n)$ are unital completely positive maps such $(\Phi_n,\Psi_n)$ is a commuting pair for every $n$, then $(\Phi^\omega,\Psi^\omega)$ is a commuting pair.
\end{lemma}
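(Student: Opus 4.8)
The plan is to exhibit $\Phi^\omega$ as a composition of three unital completely positive maps, from which both unitality and complete positivity follow at once, and then to settle the commuting-pair assertion by a direct multiplicative computation.

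First I would record well-definedness: each $\Phi_n$, being unital and completely positive, is contractive, so $\|\Phi_n(s)\|\le\|s\|$ for every $s\in\cl S$ and every $n$; hence $(\Phi_n(s))_{n}$ is uniformly bounded, its class $[\Phi_n(s)]$ lives in $(\cl B(H_n))^\omega$, and $\Phi^\omega(s)=\pi_\omega([\Phi_n(s)])$ makes sense. Next I would introduce the diagonal map $\Phi^\infty:\cl S\to\ell^\infty(\cl B(H_n))$, $\Phi^\infty(s)=(\Phi_n(s))_n$, into the C*-algebra $\ell^\infty(\cl B(H_n))$ with coordinatewise operations. Using the identification $M_k(\ell^\infty(\cl B(H_n)))\cong\ell^\infty(M_k(\cl B(H_n)))$, in which positivity is coordinatewise, the complete positivity of each $\Phi_n$ gives the complete positivity of $\Phi^\infty$, which is plainly unital. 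Composing with the quotient $*$-homomorphism $q:\ell^\infty(\cl B(H_n))\to(\cl B(H_n))^\omega$ and then with the representation $\pi_\omega:(\cl B(H_n))^\omega\to\cl B(H^\omega)$ — both unital $*$-homomorphisms, hence unital completely positive — yields exactly $\Phi^\omega=\pi_\omega\circ q\circ\Phi^\infty$. Since a composition of unital completely positive maps is unital completely positive, the first assertion follows; unitality may also be read off directly from $\Phi^\omega(1_{\cl S})=\pi_\omega([I_{H_n}])=I_{H^\omega}$.

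For the commuting-pair statement I would compute directly, exploiting that $\pi_\omega$ is multiplicative and that the product in $(\cl B(H_n))^\omega$ is coordinatewise: for $s\in\cl S$ and $t\in\cl T$,
$$\Phi^\omega(s)\Psi^\omega(t)=\pi_\omega\big([\Phi_n(s)]\,[\Psi_n(t)]\big)=\pi_\omega\big([\Phi_n(s)\Psi_n(t)]\big).$$
Because $(\Phi_n,\Psi_n)$ is a commuting pair for every $n$, we have $\Phi_n(s)\Psi_n(t)=\Psi_n(t)\Phi_n(s)$ for all $n$, so the two representing sequences coincide; applying $\pi_\omega$ gives $\Phi^\omega(s)\Psi^\omega(t)=\Psi^\omega(t)\Phi^\omega(s)$, i.e. $(\Phi^\omega,\Psi^\omega)$ is a commuting pair.

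The argument is essentially formal once the factorization is in place; the only point demanding a little care is the passage from positivity to \emph{complete} positivity, which I would handle by observing that each of the three maps (the diagonal embedding, the quotient, and $\pi_\omega$) commutes with matrix amplification, so that verifying positivity of $k\times k$ amplifications reduces coordinatewise to the complete positivity already assumed for the $\Phi_n$. No genuine obstacle arises: the role of the ultrafilter is confined to guaranteeing that $(\cl B(H_n))^\omega$ is a C*-algebra and that $\pi_\omega$ is an isometric $*$-homomorphism, facts already recorded in the excerpt.
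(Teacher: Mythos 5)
Your proof is correct and follows essentially the same route as the paper: both factor $\Phi^\omega$ as the diagonal unital completely positive map into $\ell^\infty(\cl B(H_n))$ followed by the quotient $*$-homomorphism onto $(\cl B(H_n))^\omega$ and the representation $\pi_\omega$, and both verify commutation by reducing to coordinatewise commutation of $\Phi_n(s)$ and $\Psi_n(t)$ (the paper checks this on vectors $\xi=[\xi_n]\in H^\omega$, while you invoke multiplicativity of $\pi_\omega$ — an inessential variation). Nothing is missing.
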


\begin{proof}
The map from $\cl S$ into $\ell^\infty(\cl B(H_n))$, 
sending an element $s\in \cl S$ to the sequence 
$(\Phi_n(s))$, is unital and completely positive. The quotient map $\ell^\infty(\cl B(H_n))\to (\cl B(H_n))^\omega$ is a unital $*$-homomorphism because $\mathcal I_\omega=\{(x_n):\lim_\omega\|x_n\|=0\}$ is a closed two-sided $*$-ideal. Finally, $\pi_\omega:(\cl B(H_n))^\omega\to\cl B(H^\omega)$ is a unital $*$-homomorphism. Thus the composition $s\mapsto(\Phi_n(s))\mapsto [\Phi_n(s)]\mapsto \pi_\omega([\Phi_n(s)])$ is unital and completely positive, proving the first claim.

To prove commutation, fix $s\in\cl S$, $t\in\cl T$ and $\xi=[\xi_n]\in H^\omega$. Then
\[
\Phi^\omega(s)\,\Psi^\omega(t)\,\xi
=\big[\Phi_n(s)\Psi_n(t)\,\xi_n\big]
=\big[\Psi_n(t)\Phi_n(s)\,\xi_n\big]
=\Psi^\omega(t)\,\Phi^\omega(s)\,\xi,
\]
since $\Phi_n(s)$ and $\Psi_n(t)$ commute for each $n$. Hence $[\Phi^\omega(s),\Psi^\omega(t)]=0$ in $\cl B(H^\omega)$, as required.
\end{proof}


\subsection{Definitions and  characterisations}\label{ss_tdeffc}

Motivated by the hierarchy of types in the case of correlations over finite input and output sets, we now adapt the definitions of 
no-signalling correlation types from \cite{btt} to the Cantor setup.

\begin{definition}\label{d_diffty}
\rm 
Let $X=(X_n)_{n\in\bb{N}}$, $Y=(Y_n)_{n\in\bb{N}}$, $A=(A_n)_{n\in\bb{N}}$ and $B=(B_n)_{n\in\bb{N}}$ be inductive families of sets.
A unital completely positive map $\Gamma : C(\Omega_A){\otimes} C(\Omega_B) \to 
L^\infty(\Omega_X)\bar{\otimes} L^\infty(\Omega_Y)$ is called a 
\begin{itemize}
\item[(i)] \emph{local correlation} if it is a 
finite convex combination of maps of the form 
$\Phi\otimes\Psi$, where 
$\Phi : C(\Omega_A) \to L^\infty(\Omega_X)$ and 
$\Psi : C(\Omega_B) \to L^\infty(\Omega_Y)$ are unital 
completely positive maps;

    \item[(ii)] \emph{quantum spatial correlation} if there exist separable Hilbert spaces $H$ and $K$, 
    a  unit vector $\xi\in H\otimes K$, and operator-valued channels 
$E\in\frak{C}_{\mu_X}(\Omega_{A},\Omega_{X};H)$ and $F\in\frak{C}_{\mu_Y}(\Omega_{B},\Omega_{Y};K)$, 
such that  
\begin{equation}\label{eq_forminnn}
\langle g, \Gamma(h)\rangle=\langle g\otimes \xi\xi^* ,\Phi_{E\otimes F}(h)\rangle
\end{equation}
whenever    $h\in C(\Omega_A){\otimes}C(\Omega_B)$ and 
$g\in L^1(\Omega_X)\hat{\otimes}L^1(\Omega_Y)$.
   
\item[(iii)] \emph{quantum approximate correlation} if $\Gamma\in\overline{\cl{C}_{\rm qs}}^{\rm BW}$;

\item[(iv)]\emph{quantum commuting correlation} if there exist 
a separable Hilbert space $H$, 
a unit vector $\xi\in H$, and operator-valued channels
$E\in\frak{C}_{\mu_X}(\Omega_A,\Omega_X;H)$  and $F\in\frak{C}_{\mu_Y}(\Omega_B,\Omega_Y;H)$ that form a commuting pair, such that 
\begin{equation}\label{eq_eqforqcde}
\langle g,\Gamma(h)\rangle=\langle g\otimes \xi\xi^* ,\Phi_{E\cdot F}(h)\rangle
\end{equation}
whenever $h\in C(\Omega_A){\otimes}C(\Omega_B)$ and 
$g\in L^1(\Omega_X)\hat{\otimes}L^1(\Omega_Y)$.
\end{itemize}
\end{definition}

In the context of Definition \ref{d_diffty} (iv), 
we will say that the triple $(H,E,F,\xi)$ is a realisation of the correlation $\Gamma$.
We denote by $\cl{C}_{\rm loc}(X,Y,A,B)$ (resp. $\cl{C}_{\rm qs}(X,Y,A,B)$, $\cl C_{\rm qa}(X,Y,A,B)$, $\cl C_{\rm qc}(X,Y,A,B)$)  
the set of all local, (resp. quantum spatial, quantum approximate, quantum commuting) no-signalling correlations over $(X,Y,A,B)$, and simply use $\cl C_{\rm t}$ when 
the quadruple $(X,Y,A,B)$ is clear from the context.


\begin{theorem}\label{th_n_qc}
Let $\Gamma:C(\Omega_A)\otimes C(\Omega_B)\rightarrow L^\infty(\Omega_X)\bar\otimes L^\infty(\Omega_Y)$ be a unital completely positive map and 
$(\Gamma_n)_{n\in \bb N}$ be its associated inductive family of maps.
The following are equivalent: 
   \begin{itemize}
       \item[(i)] $\Gamma\in\cl{C}_{\rm qc}(X,Y,A,B)$;
       \item[(ii)] $\Gamma_n\in\cl{C}_{\rm qc}(X_n,Y_n,A_n,B_n)$ for every $n\in\bb{N}$.  
   \end{itemize}
\end{theorem}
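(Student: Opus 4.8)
The plan is to recast Definition~\ref{d_diffty}(iv) in operator system language and work there. By Theorem~\ref{c_statSXA}, a unital completely positive map $\alpha\colon\cl S_{X,A}\to\cl B(H)$ is the same datum as a Cantor channel $E\in\frak{C}_{\mu_X}(\Omega_A,\Omega_X;H)$, and by Lemma~\ref{lem:commu_corresp} a commuting pair of such maps $(\alpha,\beta)$ on $\cl S_{X,A},\cl S_{Y,B}$ corresponds exactly to a commuting pair of channels $(E,F)$. Thus a realisation $(H,E,F,\xi)$ of $\Gamma$ is equivalent to a commuting pair $\alpha\colon\cl S_{X,A}\to\cl B(H)$, $\beta\colon\cl S_{Y,B}\to\cl B(H)$ together with a unit vector $\xi$, and a direct computation with the correspondences of Theorems~\ref{c_statSXA} and~\ref{th_ns} shows that on the cylinder generators one has $\langle\alpha(\gamma^{(n)}_{X,A}(e_{x,a}))\,\beta(\gamma^{(n)}_{Y,B}(e_{y,b}))\xi,\xi\rangle=p_{\Gamma,n}(a,b\,|\,x,y)$. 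The finite-level analogue of this dictionary holds by Remark~\ref{r_forfinnew}, so a finite realisation of $\Gamma_n$ is a commuting pair $\alpha_n\colon\cl S_{X_n,A_n}\to\cl B(H_n)$, $\beta_n\colon\cl S_{Y_n,B_n}\to\cl B(H_n)$ with a vector $\xi_n$ reproducing $p_{\Gamma,n}$.

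For (i)$\Rightarrow$(ii) I would take a pair $(\alpha,\beta,\xi)$ realising $\Gamma$ and restrict it to the finite stages via $\alpha_n:=\alpha\circ\gamma^{(n)}_{X,A}$ and $\beta_n:=\beta\circ\gamma^{(n)}_{Y,B}$. These maps remain unital and completely positive and inherit commuting ranges from $(\alpha,\beta)$, so $(H,\alpha_n,\beta_n,\xi)$ is a finite quantum commuting realisation; by the displayed identity above, the correlation it produces is exactly $p_{\Gamma,n}$, whence $\Gamma_n\in\cl C_{\rm qc}(X_n,Y_n,A_n,B_n)$.

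The substantive direction is (ii)$\Rightarrow$(i), and the main obstacle is that the finite realisations live on a priori unrelated Hilbert spaces $H_n$ with unrelated vectors $\xi_n$, so a naive limit is unavailable; I would resolve this with the ultraproduct machinery of Subsection~\ref{subseq:ultraprod}. Fix a free ultrafilter $\omega$, choose for each $n$ a commuting pair $(\alpha_n,\beta_n)$ and vector $\xi_n$ realising $\Gamma_n$, and form $H^\omega$ and $\xi=[\xi_n]$. The difficulty that $\alpha_n$ lives on the varying system $\cl S_{X_n,A_n}$ is circumvented by fixing the stage: for each $k$ the maps $(\alpha_n\circ\gamma^{(n,k)})_{n\ge k}$ (where $\gamma^{(n,k)}\colon\cl S_{X_k,A_k}\to\cl S_{X_n,A_n}$ is the composite connecting map, and the finitely many indices $n<k$ are immaterial as $\omega$ is free) share the common domain $\cl S_{X_k,A_k}$, so Lemma~\ref{lem:ultraprod_ucp} yields a unital completely positive map $A_k^\omega\colon\cl S_{X_k,A_k}\to\cl B(H^\omega)$, $A_k^\omega(u)=[\alpha_n(\gamma^{(n,k)}(u))]_\omega$, together with its partner $B_k^\omega$, forming a commuting pair. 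Since $\gamma^{(n,k+1)}\circ\gamma_{X_k,A_k}=\gamma^{(n,k)}$, one checks $A_{k+1}^\omega\circ\gamma_{X_k,A_k}=A_k^\omega$, so the family $(A_k^\omega)_k$ is compatible; the universal property of $\cl S_{X,A}=\varinjlim\cl S_{X_k,A_k}$ then produces a single unital completely positive $\alpha^\omega\colon\cl S_{X,A}\to\cl B(H^\omega)$ with $\alpha^\omega\circ\gamma^{(k)}_{X,A}=A_k^\omega$, and likewise $\beta^\omega$. As each $(A_k^\omega,B_k^\omega)$ commutes and the images $\gamma^{(k)}_{X,A}(\cl S_{X_k,A_k})$ have dense union, $\alpha^\omega$ and $\beta^\omega$ form a commuting pair.

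It then remains to identify the correlation, which is where the inductivity of the states enters. Iterating Lemma~\ref{lem_ind_stat} gives $s_{\Gamma_n}\circ(\gamma^{(n,k)}\otimes\gamma^{(n,k)})=s_{\Gamma_k}$ for $n\ge k$, so the quantity $\langle\alpha_n(\gamma^{(n,k)}(e_{x,a}))\,\beta_n(\gamma^{(n,k)}(e_{y,b}))\xi_n,\xi_n\rangle=s_{\Gamma_n}(\gamma^{(n,k)}(e_{x,a})\otimes\gamma^{(n,k)}(e_{y,b}))$ is independent of $n$ and equals $p_{\Gamma,k}(a,b\,|\,x,y)$; passing to the $\omega$-limit shows $\langle\alpha^\omega(\gamma^{(k)}_{X,A}(e_{x,a}))\,\beta^\omega(\gamma^{(k)}_{Y,B}(e_{y,b}))\xi,\xi\rangle=p_{\Gamma,k}(a,b\,|\,x,y)$. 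By Theorem~\ref{c_statSXA}, Lemma~\ref{lem:commu_corresp} and Theorem~\ref{cqc_prod}, the commuting pair $(\alpha^\omega,\beta^\omega)$ yields a commuting pair of Cantor channels $(E,F)$ on $H^\omega$, so $(H^\omega,E,F,\xi)$ realises some $\Gamma'\in\cl C_{\rm qc}(X,Y,A,B)$. Applying the already-established implication (i)$\Rightarrow$(ii) to $\Gamma'$ shows $\Gamma'_n$ has finite correlation $p_{\Gamma,n}$, hence $\Gamma'_n=\Gamma_n$ for all $n$; the uniqueness clause of Theorem~\ref{th_indcorresp} then forces $\Gamma'=\Gamma$, giving $\Gamma\in\cl C_{\rm qc}(X,Y,A,B)$. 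The delicate points to watch are the complete positivity and well-definedness of the glued map $\alpha^\omega$ (secured by the stagewise application of Lemma~\ref{lem:ultraprod_ucp} followed by the inductive-limit universal property) and the survival of the commuting-pair property through both the ultraproduct and the passage to the limit.
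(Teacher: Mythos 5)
Your proof is correct, and it rests on the same backbone as the paper's: restriction to finite levels for (i)$\Rightarrow$(ii), and the ultraproduct machinery of Subsection~\ref{subseq:ultraprod} (Lemma~\ref{lem:ultraprod_ucp}, Lemma~\ref{lem:commu_corresp}, Theorems~\ref{c_statSXA} and~\ref{cqc_prod}) for (ii)$\Rightarrow$(i). You deviate in two places, both legitimately, and the second is a genuine simplification. First, where the paper lifts each finite realisation to the Cantor level \emph{before} taking the ultraproduct --- producing maps $\Theta_n,\Lambda_n$ defined on all of $\cl S_{X,A}$ and $\cl S_{Y,B}$, so that Lemma~\ref{lem:ultraprod_ucp} applies once with a fixed domain --- you take ultraproducts stage by stage on $\cl S_{X_k,A_k}$ and glue the resulting maps $A_k^\omega$ via the universal property of the inductive limit; your compatibility check $A_{k+1}^\omega\circ\gamma_{X_k,A_k}=A_k^\omega$, the dismissal of the finitely many indices $n<k$ by freeness of $\omega$, and the density argument propagating commutation across stages are all sound, at the price of a bookkeeping step the paper's ``lift first'' ordering avoids. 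Second, your identification of the resulting correlation with $\Gamma$ is cleaner than the paper's: iterating Lemma~\ref{lem_ind_stat} you observe that $\langle\alpha_n(\gamma^{(n,k)}(e_{x,a}))\,\beta_n(\gamma^{(n,k)}(e_{y,b}))\xi_n,\xi_n\rangle=s_{\Gamma_k}(e_{x,a}\otimes e_{y,b})$ is \emph{constant} in $n\ge k$, so the $\omega$-limit is trivial, and you close via the uniqueness clause of Theorem~\ref{th_indcorresp} (in its bipartite form, with $\cl B(H)=\bb C$, exactly as the paper itself invokes it after (\ref{Gamma_n})), whereas the paper performs a considerably longer direct matrix-coefficient computation unwinding the conditional expectations and the inductivity relations of the product channels. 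Two small points you should make explicit: the identity $\langle\alpha_n(u)\beta_n(v)\xi_n,\xi_n\rangle=s_{\Gamma_n}(u\otimes v)$ holds because both sides are bilinear and agree on the generators $e_{x,a}\otimes e_{y,b}$, which span the algebraic tensor product; and the ``direct computation'' you defer in your opening dictionary (the identity $\langle\alpha(\gamma^{(n)}_{X,A}(e_{x,a}))\beta(\gamma^{(n)}_{Y,B}(e_{y,b}))\xi,\xi\rangle=p_{\Gamma,n}(a,b|x,y)$) is precisely the slice-map chain carried out in the paper inside the proof of Theorem~\ref{th_qc}, so it should be cited or reproduced rather than merely asserted.
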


\begin{proof}
We recall that $\tilde{\cl E}^{\infty}_{X_n} : L^{\infty}(\Omega_X)\bar\otimes\cl B(H)\to \cl D_{X_n}\otimes\cl B(H)$ is the canonical expectation.

\smallskip

(i)$\Rightarrow$(ii) Let $\Gamma\in\cl{C}_{\rm qc}$, and 
let $H$ be a Hilbert space, $\xi\in H$ be a unit vector, and 
$E\in\frak{C}_{\mu_X}(\Omega_A,\Omega_X;H)$ and $F\in\frak{C}_{\mu_Y}(\Omega_B,\Omega_Y;H)$ 
be channels forming a commuting pair, such that 
(\ref{eq_eqforqcde}) is satisfied.
Further, let 
$E_n\in\frak{C}(A_n,X_n;H)$ and $F_n\in\frak{C}(B_n,Y_n;H)$ be the channels such that 
$$\Phi_{E_n}=\tilde{\cl{E}}_{X_n}^\infty\circ\Phi_E\circ\iota_{A}^{(n)}
\ \mbox{ and } \ 
\Phi_{F_n}=\tilde{\cl{E}}_{Y_n}^\infty\circ\Phi_F\circ\iota_{B}^{(n)}, \ \ \ n\in\bb{N}.$$ 
By Remark \ref{r_ovic_fin_c}, 
$(E_n,F_n)$ is a commuting pair for every $n\in\bb{N}$. 

We show that $\Gamma_n$ is a quantum commuting correlation with 
realisation $(H,E_n,F_n,\xi)$. Indeed, 
if $x\in X_n$, $y\in Y_n$, $a\in A_n$ and $b\in B_n$ then 
\begin{eqnarray*}
    && \langle \delta_x\otimes\delta_y,\Gamma_n(\delta_a\otimes\delta_b)\rangle=\\ && =\langle \delta_x\otimes\delta_y,(\cl{E}_{X_n}^{\infty}\otimes\cl{E}_{Y_n}^{\infty})\circ\Gamma\circ(\iota_A^{(n)}\otimes\iota_B^{(n)})(\delta_a\otimes\delta_b)\rangle\\
    && =\langle(\iota_{X,1}^{(n)}\otimes\iota_{Y,1}^{(n)})(\delta_x\otimes\delta_y),\Gamma\circ(\iota_A^{(n)}\otimes\iota_B^{(n)})(\delta_a\otimes\delta_b)\rangle\\
    && = \langle (\iota_{X,1}^{(n)}\otimes\iota_{Y,1}^{(n)})(\delta_x\otimes\delta_y)\otimes\xi\xi^*,\Phi_{E\cdot F}\circ(\iota_A^{(n)}\otimes\iota_B^{(n)})(\delta_a\otimes\delta_b)\rangle\\
    && =\langle(\delta_x\otimes\delta_y)\otimes\xi\xi^*, 
(\widetilde{\cl{E}_{X_n}^\infty\otimes\cl{E}_{Y_n}^\infty})\circ\Phi_{E\cdot F}\circ(\iota_{A}^{(n)}\otimes\iota_{B}^{(n)})(\delta_a\otimes\delta_b)\rangle\\
    && =\langle(\delta_x\otimes\delta_y)\otimes\xi\xi^*,\Phi_{E_n\cdot F_n}(\delta_a\otimes\delta_b)\rangle,
\end{eqnarray*}
where the last equality follows from the fact that
\begin{eqnarray*}
&&(\widetilde{\cl{E}_{X_n}^\infty\otimes\cl{E}_{Y_n}^\infty})\circ\Phi_{E\cdot F}\circ(\iota_{A}^{(n)}\otimes\iota_{B}^{(n)})(\delta_a\otimes\delta_b)\\&&=(\widetilde{\cl{E}_{X_n}^\infty\otimes\cl{E}_{Y_n}^\infty})((\Phi_E(\iota_{A}^{(n)}(\delta_a)))_{1,3}(\Phi_F(\iota_{B}^{(n)}(\delta_b)))_{2,3})\\
&&=(\tilde{\cl{E}}_{X_n}^\infty(\Phi_E(\iota_{A}^{(n)}(\delta_a)))_{1,3}(\tilde{\cl{E}}_{Y_n}^\infty(\Phi_F(\iota_{B}^{(n)}(\delta_b)))_{2,3}.
\end{eqnarray*}

\smallskip

(ii)$\Rightarrow$(i)  
By assumption, for each $n\in\Bbb N$ there exist a Hilbert space $H_n$, a unit vector
$\xi_n\in H_n$, and channels
\[
E_n\in \frak C(A_n,X_n;H_n) \ \mbox{ and } \ F_n\in \frak C(B_n,Y_n;H_n),
\]
forming a commuting pair and realising $\Gamma_n$:
\begin{equation}\label{eq:realise-n}
\langle g,\Gamma_n(h)\rangle
=\langle g\otimes \xi_n\xi_n^*,\,\Phi_{E_n\cdot F_n}(h)\rangle,
\quad g\in \cl D_{X_n}\otimes \cl D_{Y_n},\ h\in \cl D_{A_n}\otimes \cl D_{B_n}.
\end{equation}
In addition, we have the inductivity relations
\begin{equation}\label{eq:Gamma-ind}
\Gamma_n
=(\cl E_{X_{n+1},X_n}\otimes \cl E_{Y_{n+1},Y_n})\circ
\Gamma_{n+1}\circ(\iota_{A_n,A_{n+1}}\otimes \iota_{B_n,B_{n+1}}), \ \ \ n\in \bb{N}.
\end{equation}

Combining equations (\ref{eq:realise-n}) and (\ref{eq:Gamma-ind}), for all $g\in \cl D_{X_n}\otimes \cl D_{Y_n}$,  $\ h\in \cl D_{A_n}\otimes \cl D_{B_n}$ we have 
\begin{eqnarray*}
    && \big\langle g\otimes \xi_n\xi_n^*,\, \Phi_{E_n\cdot F_n}(f)\big\rangle =\langle g,\Gamma_n(h)\rangle \\
    && =\langle g,(\cl E_{X_{n+1},X_n}\otimes \cl E_{Y_{n+1},Y_n})\circ
\Gamma_{n+1}\circ(\iota_{A_n,A_{n+1}}\otimes \iota_{B_n,B_{n+1}})(h) \rangle\\
&& =\langle ( \iota_{X_{n+1},X_n}\otimes \iota_{Y_{n+1},Y_n})( g),
\Gamma_{n+1}\circ(\iota_{A_n,A_{n+1}}\otimes \iota_{B_n,B_{n+1}})(h) \rangle\\
&& =\langle ( \iota_{X_{n+1},X_n}\otimes \iota_{Y_{n+1},Y_n})( g) \otimes \xi_{n+1}\xi_{n+1}^{*},
\Phi_{E_{n+1}\cdot F_{n+1}}\circ(\iota_{A_n,A_{n+1}}\otimes \iota_{B_n,B_{n+1}})(h) \rangle.
\end{eqnarray*}
Thus, for all $g\in \cl D_{X_n}\otimes \cl D_{Y_n}$,  $\ h\in \cl D_{A_n}\otimes \cl D_{B_n}$, we obtain
\begin{equation}\label{eq:Phi-product-ind}
\begin{aligned}
\big\langle g\otimes \xi_n\xi_n^*,\, \Phi_{E_n\cdot F_n}(h)\big\rangle
&=\Big\langle g\otimes \xi_{n+1}\xi_{n+1}^*,\,
(\widetilde{\mathcal E_{X_{n+1},X_n}\otimes \mathcal E_{Y_{n+1},Y_n}})\\
&\hspace{3.2cm}\circ
\Phi_{E_{n+1}\cdot F_{n+1}}\big((\iota_{A_n,A_{n+1}}\otimes \iota_{B_n,B_{n+1}})(h)\big)
\Big\rangle.
\end{aligned}
\end{equation}

 Define the channels
\[
\Phi'_{E_n}:=\tilde{\iota}_{X,\infty}^{(n)}\circ\Phi_{E_n}\circ \cl E_{A_n}^\infty|_{C(\Omega_A)}:
C(\Omega_A)\to L^\infty(\Omega_X)\bar\otimes \cl B(H_n),
\]
\[
\Phi'_{F_n}:=\tilde{\iota}_{Y,\infty}^{(n)}\circ \Phi_{F_n} \circ \cl E_{B_n}^\infty|_{C(\Omega_B)}:
C(\Omega_B)\to L^\infty(\Omega_Y)\bar\otimes \cl B(H_n),
\]
which form a commuting pair at each level $n$.
By Theorem~\ref{c_statSXA} (with $H=H_n$), there exist unique unital completely positive maps
\[
\Theta_{n}:\mathcal S_{X,A}\to \cl B(H_n) \ \mbox{ and } \ 
\Lambda_{n}:\mathcal S_{Y,B}\to \cl B(H_n),
\]
such that
\begin{equation}\label{eq:slices-level}
\begin{aligned}
   & |X_m|\,L_{\iota_{X,1}^{(m)}(\delta_x)}\!\big(\Phi'_{E_n}(\iota_A^{(m)}(\delta_a))\big)
=\Theta_n\big(\gamma^{(m)}_{X,A}(e_{x,a})\big),\\
&|Y_m|\,L_{\iota_{Y,1}^{(m)}(\delta_y)}\!\big(\Phi'_{F_n}(\iota_B^{(m)}(\delta_b))\big)
=\Lambda_n\big(\gamma^{(m)}_{Y,B}(e_{y,b})\big)
\end{aligned}
\end{equation}
for all $m\in\Bbb N$, $x\in X_m$, $a\in A_m$, $y\in Y_m$, $b\in B_m$ (see (\ref{eq_dualslice})).
In particular, by Lemma \ref{lem:commu_corresp} we obtain that $(\Theta_n, \Lambda_n)$ a commuting pair, $n\in \bb N$.

Fix a free ultrafilter $\omega$ on $\mathbb N$ and form the Hilbert ultrapower $H^\omega$ and the abstract ultraproduct
$\prod^\omega\big(\mathcal B(H_n),H_n\big)\ =\ \mathcal B(H^\omega)$
 (see Subsection \ref{subseq:ultraprod}).
Let $\xi=[\xi_n]$; thus, $\xi$ is a unit vector in $H^\omega$.
By Lemma \ref{lem:ultraprod_ucp}, we have unital completely positive maps
$\Theta^{\omega} : \cl S_{X,A}\to \cl B(H^\omega)$ and 
$\Lambda^{\omega} : \cl S_{Y,B}\to \cl B(H^\omega)$, given by 
\[
\Theta^{\omega}(s):=[\Theta_n(s)]_\omega \ \mbox{ and } \ 
\Lambda^{\omega}(t):=[\Lambda_n(t)]_\omega,
\]
that form a commuting pair. 
Apply Theorem~\ref{c_statSXA} again (with $H=H^\omega$) to obtain unital completely positive maps
\[
\Phi: C(\Omega_A)\to L^\infty(\Omega_X)\bar\otimes \cl B(H^\omega) \ \mbox{ and } \ 
\Psi: C(\Omega_B)\to L^\infty(\Omega_Y)\bar\otimes \cl B(H^\omega),
\]
satisfying 
\begin{equation}\label{eq:slices-omega}
\begin{aligned}&|X_k|\,L_{\iota_{X,1}^{(k)}(\delta_x)}\!\big(\Phi(\iota_A^{(k)}(\delta_a))\big)
=\Theta^{\omega}\big(\gamma^{(k)}_{X,A}(e_{x,a})\big),\\
&|Y_k|\,L_{\iota_{Y,1}^{(k)}(\delta_y)}\!\big(\Psi(\iota_B^{(k)}(\delta_b))\big)
=\Lambda^{\omega}\big(\gamma^{(k)}_{Y,B}(e_{y,b})\big)
\end{aligned}
\end{equation}
for all $x\in X_k$, $a\in A_k$, $y\in Y_k$, $b\in B_k$, $k\in \bb{N}$.
By Lemma \ref{lem:commu_corresp} again, the pair $(\Phi,\Psi)$ is commuting.

Now, by Theorem \ref{mod_mu}, there exist operator-valued channels
$E\in\frak{C}_{\mu_X}(\Omega_A,\Omega_X;H)$  and $F\in\frak{C}_{\mu_Y}(\Omega_B,\Omega_Y;H)$ such that $ \Phi= \Phi_{E}$ and $ \Psi= \Psi_{F}$, and since $E$ and $F$ form a commuting pair, by Theorem \ref{cqc_prod} we obtain the channel $ E \cdot F$, giving rise to the unital completely positive map 
\[
\Phi_{E\cdot F}: C(\Omega_A)\otimes C(\Omega_B)\to
L^\infty(\Omega_X)\bar\otimes L^\infty(\Omega_Y)\bar\otimes \cl B(H^\omega).
\]
Fix $n\in\Bbb N$, $a\in A_n$, $b\in B_n$, and take any $k\ge n$, $x\in X_k$, $y\in Y_k$.
By inductivity and \eqref{eq:realise-n},
\begin{multline}\label{eq:first-id}
\Big\langle(\iota_{X,1}^{(k)}\otimes\iota_{Y,1}^{(k)})(\delta_x\otimes\delta_y),\
\Gamma\big((\iota_A^{(n)}\otimes\iota_B^{(n)})(\delta_a\otimes\delta_b)\big)\Big\rangle\\
=\Big\langle\delta_x\otimes\delta_y,\
\Gamma_k\big((\iota_{A_n,A_k}\otimes\iota_{B_n,B_k})(\delta_a\otimes\delta_b)\big)\Big\rangle
\\
= \Big\langle \delta_x\otimes\delta_y\otimes\xi_k\xi_k^*,\
\Phi_{E_k\cdot F_k}\big((\iota_{A_n,A_k}\otimes\iota_{B_n,B_k})(\delta_a\otimes\delta_b)\big)\Big\rangle.
\end{multline}

Using (\ref{eq:slices-omega}), we have 

\begin{eqnarray*}
& &\Big\langle (\iota_{X,1}^{(k)}\otimes \iota_{Y,1}^{(k)})(\delta_x\otimes \delta_y)\otimes \xi\xi^*,\,
\Phi_{E\cdot F}(\iota_A^{(n)}(\delta_a) \otimes \iota_B^{(n)}(\delta_b)) \Big\rangle\\
&=& \Big\langle
L_{\iota_{X,1}^{(k)}(\delta_x)}(\Phi(\iota_A^{(n)}(\delta_a)))\,
L_{\iota_{Y,1}^{(k)}(\delta_y)}(\Psi(\iota_B^{(n)}(\delta_b)))\,\xi, \,\xi \Big\rangle\\[0.5ex]
&=& \Big\langle
L_{\iota_{X,1}^{(k)}(\delta_x)}(\Phi(\iota_A^{(k)} \circ\iota_{A_{n},A_{k}}(\delta_a)))\,
L_{\iota_{Y,1}^{(k)}(\delta_y)}(\Psi(\iota_B^{(k)}\circ \iota_{B_{n},B_{k}}(\delta_b)))\,\xi,\, \xi\, \Big\rangle\\[0.5ex]
&=& \frac{1}{|X_k|\,|Y_k|}\,
\Big\langle 
\Theta^{\omega}
\left(\gamma^{(k)}_{X,A}\left(\sum_{\vec\lambda}\ e_{x,(a,\vec\lambda)}\right)\right)\,
\Lambda^{\omega}\left(\gamma^{(k)}_{Y,B}\left(\sum_{\vec\mu}\ e_{y,(b,\vec\mu)} \right)\right)\,\xi,\, \xi\,\Big\rangle, 
\end{eqnarray*}
where the summations are over $\vec\lambda=(\lambda_{n+1},\ldots,\lambda_k)\in [d_{n+1}^A]\times\ldots\times[d_k^A]$ and $\vec\mu=(\mu_{n+1},\ldots,\mu_k)\in [d_{n+1}^B]\times\ldots\times[d_k^B]$.
Taking the limit along the ultrafilter and using (\ref{eq:slices-level}), 
we obtain 
\begin{eqnarray*}
& &\Big\langle (\iota_{X,1}^{(k)}\otimes \iota_{Y,1}^{(k)})(\delta_x\otimes \delta_y)\otimes \xi\xi^*,\,
\Phi_{E\cdot F}(\iota_A^{(n)}(\delta_a) \otimes \iota_B^{(n)}(\delta_b)) \Big\rangle\\
&=& \frac{1}{|X_k|\,|Y_k|}\,
\Big\langle 
\Theta^{\omega}(\gamma^{(k)}_{X,A}(\sum_{\vec\lambda}\ e_{x,(a,\vec\lambda)}))\,
\Lambda^{\omega}(\gamma^{(k)}_{Y,B}(\sum_{\vec\mu}\ e_{y,(b,\vec\mu)} ))\,\xi,\, \xi\,\Big\rangle\\[0.5ex]
&=&\lim_{m\to\omega}\frac{1}{|X_k|\,|Y_k|}\,
\Big\langle 
\Theta_m(\gamma^{(k)}_{X,A}(\sum_{\vec\lambda}\ e_{x, (a,\vec\lambda)}))\,
\Lambda_m(\gamma^{(k)}_{Y,B}(\sum_{\vec\mu}\ e_{y,(b,\vec\mu)}))\,\xi_{m},\, \xi_m\,\Big\rangle\\[0.5ex]
& =& \lim_{m\to\omega}\,
\Big\langle 
L_{\iota_{X,1}^{(k)}(\delta_x)}(\Phi_{E_m}'(\iota_A^{(n)}(\delta_a)))\,
L_{\iota_{Y,1}^{(k)}(\delta_y)}(\Phi_{F_m}'(\iota_B^{(n)}(\delta_b)))\,\xi_{m},\, \xi_m\,\Big\rangle\\[0.5ex]
& =& \lim_{m\to\omega}\, \Big\langle \delta_x\otimes \delta_y\otimes \xi_{m}\xi_{m}^*,\,
((\tilde{\mathcal E}_{X_k}^\infty\circ \Phi_{E_m}')(\iota_A^{(n)}(\delta_a)))_{1,3}\,
((\tilde{\mathcal E}_{Y_k}^\infty\circ \Phi_{F_m}')(\iota_B^{(n)}(\delta_b)))_{2,3}\Big\rangle; 
\end{eqnarray*}
if $ m \geq k$ then, by the inductivity relations (\ref{eq:Phi-product-ind}) and equation (\ref{eq:first-id}), we have 
\begin{eqnarray*}
& & \Big\langle \delta_x\otimes \delta_y\otimes \xi_{m}\xi_{m}^*,\,
((\tilde{\mathcal E}_{X_k}^\infty \circ\Phi_{E_m}')(\iota_A^{(n)}(\delta_a)))_{1,3}\,
((\tilde{\mathcal E}_{Y_k}^\infty \circ\Phi_{F_m}')(\iota_B^{(n)}(\delta_b)))_{2,3}\Big\rangle\\[0.5ex]
& = &
\hspace{-0.25cm}
\Big\langle \hspace{-0.05cm} \delta_x \hspace{-0.1cm}\otimes \hspace{-0.1cm} \delta_y
\hspace{-0.1cm}\otimes \hspace{-0.1cm}\xi_{m}\xi_{m}^*,
\hspace{-0.05cm}
(\hspace{-0.02cm}(\tilde{\mathcal E}_{X_m,X_k} \hspace{-0.05cm}\circ \hspace{-0.05cm} \Phi_{E_m}\hspace{-0.05cm})(\iota_{A_{n,A_m}}(\delta_a)\hspace{-0.02cm})\hspace{-0.02cm})_{1,3}
(\hspace{-0.02cm}(\tilde{\mathcal E}_{Y_m,Y_k}\hspace{-0.05cm}\circ \hspace{-0.05cm}\Phi_{F_m}\hspace{-0.05cm})(\iota_{B_n,B_m}(\delta_b)\hspace{-0.02cm})\hspace{-0.02cm})_{2,3}\hspace{-0.05cm}\Big\rangle\\[0.5ex]
 &=& 
 \hspace{-0.25cm}
 \Big\langle \hspace{-0.05cm} \delta_x\otimes \delta_y\otimes \xi_{m}\xi_{m}^*,\,
\left(\widetilde{\mathcal E_{X_m,X_k}\otimes  \mathcal E_{Y_m,Y_k}}\circ\Phi_{E_m\cdot F_m}\circ(\iota_{A_{n},A_m}\otimes \iota_{B_n,B_m})\right)\hspace{-0.1cm}(\delta_a \otimes \delta_b)\hspace{-0.05cm}\Big\rangle\\[0.5ex]
&=& 
\hspace{-0.25cm}
\Big\langle \delta_x\otimes \delta_y\otimes \xi_{k}\xi_{k}^*,
\left(\Phi_{E_k\cdot F_k}\circ(\iota_{A_{n},A_k}\otimes \iota_{B_n,B_k})\right)
\hspace{-0.02cm}(\delta_a \otimes \delta_b)\Big\rangle\\[0.5ex]
&=&
\hspace{-0.25cm}
\Big\langle(\iota_{X,1}^{(k)}\otimes\iota_{Y,1}^{(k)})(\delta_x\otimes\delta_y),\
\Gamma\big((\iota_A^{(n)}\otimes\iota_B^{(n)})(\delta_a\otimes\delta_b)\big)\Big\rangle. 
\end{eqnarray*} 
Combining the previous calculations, we conclude that
\begin{multline*}\label{eq:first-id}
\Big\langle(\iota_{X,1}^{(k)}\otimes\iota_{Y,1}^{(k)})(\delta_x\otimes\delta_y),\
\Gamma\big((\iota_A^{(n)}\otimes\iota_B^{(n)})(\delta_a\otimes\delta_b)\big)\Big\rangle\\
=\ \Big\langle (\iota_{X,1}^{(k)}\otimes \iota_{Y,1}^{(k)})(\delta_x\otimes \delta_y)\otimes \xi\xi^*,\,
\Phi_{E\cdot F}(\iota_A^{(n)}(\delta_a) \otimes \iota_B^{(n)}(\delta_b)) \Big\rangle;
\end{multline*}
therefore
$\Gamma\in \mathcal C_{\rm qc}(X,Y,A,B)$.
\end{proof}

\begin{corollary}\label{c_clofqc}
Let $X$, $Y$, $A$ and $B$
be inductive families of finite sets. 
Then the set $\cl C_{\rm qc}(X,Y,A,B)$ is closed in the 
BW topology.
\end{corollary}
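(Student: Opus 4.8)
The plan is to reduce the closedness of $\cl C_{\rm qc}(X,Y,A,B)$ to the (standard) closedness of the finite quantum commuting correlation sets $\cl C_{\rm qc}(X_n,Y_n,A_n,B_n)$, using the level-$n$ projection $\Gamma\mapsto\Gamma_n$ together with the characterisation in Theorem \ref{th_n_qc}. Concretely, I would take a net $(\Gamma^{\lambda})_{\lambda}$ in $\cl C_{\rm qc}(X,Y,A,B)$ converging in the BW topology to a map $\Gamma$. Since the set of unital completely positive maps from $C(\Omega_A)\otimes C(\Omega_B)$ into $L^\infty(\Omega_X)\bar{\otimes} L^\infty(\Omega_Y)$ is BW-compact, hence BW-closed, the limit $\Gamma$ is again unital and completely positive, so it has a well-defined associated inductive family $(\Gamma_n)_{n\in\bb{N}}$.

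The first key step is to show that the passage $\Gamma\mapsto\Gamma_n$ is BW-to-entrywise continuous for each fixed $n$; since the range $\cl D_{X_n}\otimes\cl D_{Y_n}$ is finite dimensional, it suffices to track the matrix entries $\langle\delta_x\otimes\delta_y,\Gamma_n(\delta_a\otimes\delta_b)\rangle$. Using that $\cl E^\infty_{X_n}$ is the adjoint of the isometry $\iota^{(n)}_{X,1}$, as recorded in Subsection \ref{ss_ind_fam}, I would rewrite
$$\langle\delta_x\otimes\delta_y,\Gamma^{\lambda}_n(\delta_a\otimes\delta_b)\rangle = \big\langle(\iota^{(n)}_{X,1}\otimes\iota^{(n)}_{Y,1})(\delta_x\otimes\delta_y),\,\Gamma^{\lambda}\big(\iota^{(n)}_A(\delta_a)\otimes\iota^{(n)}_B(\delta_b)\big)\big\rangle.$$
Here $(\iota^{(n)}_{X,1}\otimes\iota^{(n)}_{Y,1})(\delta_x\otimes\delta_y)$ is a fixed element of the predual $L^1(\Omega_X)\hat{\otimes} L^1(\Omega_Y)$ and $\iota^{(n)}_A(\delta_a)\otimes\iota^{(n)}_B(\delta_b)$ is a fixed element of $C(\Omega_A)\otimes C(\Omega_B)$, so the BW convergence $\Gamma^\lambda\to\Gamma$ yields convergence of these entries to the corresponding entries of $\Gamma_n$. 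Thus $\Gamma^{\lambda}_n\to\Gamma_n$ in the (finite-dimensional, hence unique) topology on maps $\cl D_{A_n}\otimes\cl D_{B_n}\to\cl D_{X_n}\otimes\cl D_{Y_n}$.

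The remaining steps assemble the conclusion. By Theorem \ref{th_n_qc}, each $\Gamma^{\lambda}_n$ lies in $\cl C_{\rm qc}(X_n,Y_n,A_n,B_n)$, and, identifying $\Gamma^{\lambda}_n$ with the finite correlation $p_{\Gamma^{\lambda},n}$ via Theorem \ref{th_orcha} (equivalently, with states on $\cl S_{X_n,A_n}\otimes_{\rm c}\cl S_{Y_n,B_n}$), the entrywise convergence above is exactly convergence of these finite correlations. Since the finite quantum commuting correlation set is closed---this follows because the state space of $\cl S_{X_n,A_n}\otimes_{\rm c}\cl S_{Y_n,B_n}$ is weak*-compact and the assignment $s\mapsto(s(e_{x,a}\otimes e_{y,b}))$ is a continuous linear map into a finite-dimensional space---we conclude $\Gamma_n\in\cl C_{\rm qc}(X_n,Y_n,A_n,B_n)$ for every $n$. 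A final application of Theorem \ref{th_n_qc} gives $\Gamma\in\cl C_{\rm qc}(X,Y,A,B)$, proving closedness.

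I expect the only genuinely delicate point to be the continuity identity in the second paragraph: one must verify that the predual pairing used to extract the entries of $\Gamma_n$ is precisely the one realising $\cl E^\infty_{X_n}$ as the adjoint of $\iota^{(n)}_{X,1}$, so that the level-$n$ entries are honestly BW-continuous linear functionals of $\Gamma$. Everything else is either BW-compactness of unital completely positive maps or the classical finite-dimensional closedness of $\cl C_{\rm qc}$, both of which are available.
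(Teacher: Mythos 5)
Your proposal is correct and follows essentially the same route as the paper's proof, which likewise combines Theorem \ref{th_n_qc}, the BW-continuity of the projection $\Gamma\mapsto\Gamma_n$, and the closedness of the finite quantum commuting correlation sets via the state-space description in Theorem \ref{th_orcha}. The only difference is one of exposition: you spell out the continuity of $\Gamma\mapsto\Gamma_n$ through the predual pairing realising $\cl E^{\infty}_{X_n}$ as the adjoint of $\iota^{(n)}_{X,1}$, a verification the paper asserts without detail, and your verification is accurate.
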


\begin{proof}
The claim follows from Theorem \ref{th_n_qc}, 
the fact that the map $\Gamma\to \Gamma_n$ is 
continuous in the BW topology, and 
the fact that the class of all quantum commuting 
correlations over a quadruple of finite sets is closed (see Theorem \ref{th_orcha}). 
\end{proof}

\begin{proposition}
Let $X$, $Y$, $A$ and $B$
be inductive families of finite sets. Then, writing 
$\cl C_{\rm t} = \cl C_{\rm t}(X,Y,A,B)$, we have 
\begin{equation*}
   \cl C_{\rm loc} \subseteq \cl C_{\rm qs} \subseteq \cl C_{\rm qa}\subseteq \cl C_{\rm qc}  \subseteq \cl C_{\rm ns}.
\end{equation*}
\end{proposition}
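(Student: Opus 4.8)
The plan is to prove the four inclusions separately, noting that two are essentially formal while the other two rest on the bipartite channel constructions of Subsection \ref{ss_bipovch}. The inclusion $\cl C_{\rm qs}\subseteq \cl C_{\rm qa}$ is immediate from Definition \ref{d_diffty}(iii), since $\cl C_{\rm qa}=\overline{\cl C_{\rm qs}}^{\rm BW}\supseteq \cl C_{\rm qs}$. For $\cl C_{\rm qa}\subseteq \cl C_{\rm qc}$ I would first establish $\cl C_{\rm qs}\subseteq \cl C_{\rm qc}$ and then pass to BW-closures: as $\cl C_{\rm qc}$ is BW-closed by Corollary \ref{c_clofqc}, this yields $\cl C_{\rm qa}=\overline{\cl C_{\rm qs}}^{\rm BW}\subseteq \overline{\cl C_{\rm qc}}^{\rm BW}=\cl C_{\rm qc}$.

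For $\cl C_{\rm loc}\subseteq \cl C_{\rm qs}$, write a local correlation as $\Gamma=\sum_{i=1}^m\lambda_i\,\Phi_i\otimes\Psi_i$ with $\lambda_i\ge 0$, $\sum_i\lambda_i=1$, and $\Phi_i,\Psi_i$ unital completely positive. By the scalar case ($H=\bb{C}$) of Theorem \ref{mod_mu}, each $\Phi_i$ (resp.\ $\Psi_i$) corresponds to a classical channel $E_i\in\frak{C}_{\mu_X}(\Omega_A,\Omega_X;\bb{C})$ (resp.\ $F_i\in\frak{C}_{\mu_Y}(\Omega_B,\Omega_Y;\bb{C})$). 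I would then take $H=K=\bb{C}^m$, form the diagonal channels $E(\alpha|x)=\mathrm{diag}(E_1(\alpha|x),\dots,E_m(\alpha|x))$ and $F(\beta|y)=\mathrm{diag}(F_1(\beta|y),\dots,F_m(\beta|y))$, which lie in $\frak{C}_{\mu_X}(\Omega_A,\Omega_X;H)$ and $\frak{C}_{\mu_Y}(\Omega_B,\Omega_Y;K)$, and choose the maximally correlated unit vector $\xi=\sum_{i=1}^m\sqrt{\lambda_i}\,e_i\otimes e_i\in H\otimes K$. Testing (\ref{eq_forminnn}) on cylinder functions $h=\chi_{\tilde a}\otimes\chi_{\tilde b}$ and product functionals $g=g_X\otimes g_Y$, the diagonal structure gives $\langle\xi,(E(\tilde a|x)\otimes F(\tilde b|y))\xi\rangle=\sum_i\lambda_i E_i(\tilde a|x)F_i(\tilde b|y)$, which matches $\langle g,\Gamma(h)\rangle$ after integrating against $g$; density of the cylinders and linearity then close this inclusion.

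For $\cl C_{\rm qs}\subseteq \cl C_{\rm qc}$, given a spatial realisation $(H,K,E,F,\xi)$ I would dilate onto a single Hilbert space: set $\widetilde H=H\otimes K$ and define $E'(\alpha|x)=E(\alpha|x)\otimes I_K$ and $F'(\beta|y)=I_H\otimes F(\beta|y)$, which are channels valued in $\cl B(\widetilde H)$ with commuting ranges. Then $(E'\cdot F')(\alpha\times\beta|x,y)=E(\alpha|x)\otimes F(\beta|y)=(E\otimes F)(\alpha\times\beta|x,y)$, so by the uniqueness in Theorem \ref{cqc_prod} and the corollary following it, $\Phi_{E'\cdot F'}=\Phi_{E\otimes F}$; retaining the same $\xi$ in (\ref{eq_eqforqcde}) exhibits $\Gamma$ as quantum commuting. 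Finally, for $\cl C_{\rm qc}\subseteq \cl C_{\rm ns}$ I would verify the two conditions of Definition \ref{d_nsCOm} directly from (\ref{eq_eqforqcde}): for $f\in C(\Omega_A)$, unitality of $\phi_F$ gives $\Phi_{E\cdot F}(f\otimes 1_B)=\phi_E(f)=\frak{f}(1_{\Omega_Y}\otimes\Phi_E(f))$, which carries the identity on the $\Omega_Y$-leg, so pairing $g_X\otimes g_Y\otimes\xi\xi^*$ against it factors through $\int g_Y\,d\mu_Y$; hence $\Gamma(f\otimes 1_B)$ is independent of the $\Omega_Y$-variable, i.e.\ lies in $L^\infty(\Omega_X)\otimes 1_Y$. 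The symmetric computation yields the second condition.

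The main obstacle is bookkeeping rather than conceptual: in the last two inclusions one must track the flip $\frak{f}$ and the placement of the identity on the correct tensor leg in the three-leg expressions defining $\Phi_{E\cdot F}$ and $\Phi_{E\otimes F}$, and check the pairing identities with the normalising factors inherited from the finite levels. I expect the delicate point to be the step $\cl C_{\rm qc}\subseteq \cl C_{\rm ns}$, where the no-signalling conditions must be read off from (\ref{eq_eqforqcde}) using the unitality of the complementary channel together with the precise leg on which the identity sits.
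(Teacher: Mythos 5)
Your proposal is correct and follows essentially the same route as the paper: the inclusion $\cl C_{\rm qs}\subseteq \cl C_{\rm qa}$ is definitional, $\cl C_{\rm qa}\subseteq \cl C_{\rm qc}$ is obtained exactly as in the paper from the BW-closedness of $\cl C_{\rm qc}$ (Corollary \ref{c_clofqc}) together with $\cl C_{\rm qs}\subseteq \cl C_{\rm qc}$, and your slice-map verification of $\cl C_{\rm qc}\subseteq \cl C_{\rm ns}$ (pairing against $g_X\otimes g_Y\otimes \xi\xi^*$, using unitality of $\phi_F$ so that $\Phi_{E\cdot F}(f\otimes 1_B)=\phi_E(f)$ carries the identity on the $\Omega_Y$-leg) is precisely the paper's computation. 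The only difference is that you write out in full the two steps the paper dismisses as standard --- the diagonal-channel construction with $\xi=\sum_i\sqrt{\lambda_i}\,e_i\otimes e_i$ for $\cl C_{\rm loc}\subseteq \cl C_{\rm qs}$, and the amplification $E\otimes I_K$, $I_H\otimes F$ with the uniqueness clause of Theorem \ref{cqc_prod} for $\cl C_{\rm qs}\subseteq \cl C_{\rm qc}$ --- and both are carried out correctly.
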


\begin{proof}
We show that quantum commuting correlations are no-signalling. Assume that $ \Gamma \in \cl C_{\rm qc}$;  we claim that 
$ \Gamma( C(\Omega_A) \otimes 1_{B}) \subseteq L^{\infty}(\Omega_X) \otimes 1_{Y}$ and $\Gamma(1_{A} \otimes C(\Omega_B)) \subseteq  
1_{X} \otimes L^{\infty}(\Omega_Y)$. Fix  $h \in C(\Omega_A)$; 
it suffices to show that $L_{\omega}(\Gamma(h\otimes 1_{B})) \in \bb{C}\cdot 1_{Y}$ for all $ \omega \in L^{1}(\Omega_X)$. 
Let $ g \in L^{1}(\Omega_Y)$ and note that
\begin{align*}
   & \langle g, L_{\omega}(\Gamma(h\otimes 1_{B})) \rangle = \langle g \otimes \omega , \Gamma(h\otimes 1_{B}) \rangle \\
     & = \langle (\omega \otimes g) \otimes \xi \xi^*, \Phi_{E\cdot F}(h \otimes 1_{B})\rangle 
 = \langle (\omega \otimes g) \otimes \xi \xi^*, \Phi_{E}(h) \otimes 1_{Y}\rangle\\
&     =  \langle  g , L_{\omega \otimes \xi \xi^*}(\Phi_{E}(h)) \cdot 1_{Y}\rangle
\end{align*}
and, since $ g$ is arbitrary, we have that $$L_{\omega}(\Gamma(h\otimes 1_{B})) = L_{\omega \otimes \xi \xi^*}(\Phi_{E}(h)) \cdot 1_{Y}$$ for every $ \omega \in  L^{1}(\Omega_X)$, as desired. The fact that $\Gamma(1_{A} \otimes C(\Omega_B)) \subseteq  
1_{X} \otimes L^{\infty}(\Omega_Y)$ follows by symmetry.

The first inclusion 
can be shown in a standard way following the finite case, the second one is
trivial, while the third follows from 
Corollary \ref{c_clofqc} and 
the fact that $\cl C_{\rm qs}\subseteq \cl C_{\rm qc}$.
\end{proof}


Our next aim is to obtain an operator algebraic
description of quantum commuting and approximately quantum correlations over 
Cantor spaces. 

\begin{remark} \rm  \label{r_ind_stat_qc}
\rm   
Given a correlation $\Lambda$ of quantum commuting type over a quadruple 
$(S,T,U,V)$ of finite sets, let $s_{\Lambda}$
be the (unique) state of $\cl S_{S,U}\otimes_{\rm c} \cl S_{T,V}$
corresponding to $\Lambda$ via 
Theorem \ref{th_orcha}. 
By Lemma \ref{lem_ind_stat}, 
if $\Gamma_n$ is a no-signalling correlation over $(X_n,Y_n,A_n,B_n)$, 
$n\in \bb{N}$, 
then the family $(\Gamma_n)_{n\in\bb{N}}$ 
is inductive if and only if 
\begin{align}\label{ind_stat_qc}
s_{\Gamma_{n+1}}\circ(\gamma_{X_n,A_n}\otimes\gamma_{Y_n,B_n}) = s_{\Gamma_n},\ \ \ n\in\bb{N}.
\end{align}
\end{remark}

\begin{lemma}\label{l_ampliqc} 
Let ${\rm t}\in\{\rm loc, qs,qc, ns\}$, $n\in \bb{N}$ and
$\Gamma_n\in \cl C_{\rm t}(X_n,Y_n,A_n,B_n)$. 
Then the correlation $\tilde{\Gamma}_n$ defined by letting $$\tilde{\Gamma}_n = (\iota_{X,\infty}^{(n)}\otimes \iota_{Y,\infty}^{(n)})\circ \Gamma_n\circ (\cl E_{A_{n}}^{\infty} 
\otimes \cl E_{B_{n}}^{\infty})|_{C(\Omega_A)\otimes C(\Omega_B)}$$
belongs to $\cl C_{\rm t}(X,Y,A,B)$.
\end{lemma}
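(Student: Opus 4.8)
The plan is to verify the four cases of Definition \ref{d_diffty} separately, noting first that $\tilde\Gamma_n$ is unital and completely positive, being a composition of such maps, so it is at least a legitimate candidate correlation over $(X,Y,A,B)$. The two routine cases are local and no-signalling. For the local case I would distribute the outer inclusion and inner expectation across the convex combination and the tensor factors: if $\Gamma_n=\sum_i\lambda_i\,\Phi_i\otimes\Psi_i$ with each $\Phi_i:\cl D_{A_n}\to\cl D_{X_n}$ and $\Psi_i:\cl D_{B_n}\to\cl D_{Y_n}$ unital completely positive, then $\tilde\Gamma_n=\sum_i\lambda_i\,(\iota^{(n)}_{X,\infty}\circ\Phi_i\circ\cl E^\infty_{A_n})\otimes(\iota^{(n)}_{Y,\infty}\circ\Psi_i\circ\cl E^\infty_{B_n})$, restricted to $C(\Omega_A)\otimes C(\Omega_B)$, and each factor is a unital completely positive map of exactly the shape required in Definition \ref{d_diffty}(i). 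For the no-signalling case I would use that $\cl E^\infty_{B_n}$ is unital, so that $\tilde\Gamma_n(f\otimes 1_B)=(\iota^{(n)}_{X,\infty}\otimes\iota^{(n)}_{Y,\infty})\big(\Gamma_n(\cl E^\infty_{A_n}(f)\otimes 1_{B_n})\big)$; since $\Gamma_n$ is no-signalling this lies in $\iota^{(n)}_{X,\infty}(\cl D_{X_n})\otimes 1_Y\subseteq L^\infty(\Omega_X)\otimes 1_Y$, and the symmetric inclusion follows identically, giving $\tilde\Gamma_n\in\cl C_{\rm ns}$ via Definition \ref{d_nsCOm}.

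The substantial cases are quantum spatial and quantum commuting, and here I would argue by lifting a finite realisation of $\Gamma_n$ to the Cantor level. Fix a realisation of $\Gamma_n$ by finite operator-valued channels $E_n$ and $F_n$ with a common unit vector $\xi$ (living on $H$ and $K$ in the spatial case, or on a common $H$ in the commuting case), and lift them to Cantor channels $E\in\frak C_{\mu_X}(\Omega_A,\Omega_X;H)$ and $F\in\frak C_{\mu_Y}(\Omega_B,\Omega_Y;K)$ by setting $\Phi_E=\tilde\iota^{(n)}_{X,\infty}\circ\Phi_{E_n}\circ\cl E^\infty_{A_n}|_{C(\Omega_A)}$ and analogously for $F$, which is precisely the lift appearing in Remark \ref{r_appuni}. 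In the commuting case, because the finite levels of $E$ and $F$ carry exactly the $\cl B(H)$-valued data of $E_n$ and $F_n$ placed on disjoint $L^\infty$-legs, they remain commuting pairs at every pair of levels, so Remark \ref{r_ovic_fin_c} shows that $(E,F)$ is a commuting pair and Theorem \ref{cqc_prod} produces the channel $E\cdot F$; in the spatial case the Corollary following Theorem \ref{cqc_prod} produces $E\otimes F$ on $H\otimes K$. I then claim that $(H,E,F,\xi)$ (resp. $(H,K,E,F,\xi)$) realises $\tilde\Gamma_n$, which places $\tilde\Gamma_n$ in $\cl C_{\rm qc}$ (resp. $\cl C_{\rm qs}$).

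To prove the realisation claim I would avoid a head-on pairing computation on $L^1(\Omega_X)\hat\otimes L^1(\Omega_Y)$ and instead invoke uniqueness of associated families: writing $\Gamma'$ for the Cantor correlation realised by the lifted channels, a Cantor correlation is determined by its associated family via Theorem \ref{th_indcorresp}, applied with trivial $H=\bb C$ to the product families $(A_n\times B_n)_n$ and $(X_n\times Y_n)_n$, so it suffices to check $(\Gamma')_m=(\tilde\Gamma_n)_m$ for every $m$. A direct computation, using $\cl E^\infty_{X_m}\circ\iota^{(n)}_{X,\infty}=\iota_{X_n,X_m}$ and $\cl E^\infty_{A_n}\circ\iota^{(m)}_A=\cl E_{A_m,A_n}$, gives for $m\ge n$ the identity $(\tilde\Gamma_n)_m=(\iota_{X_n,X_m}\otimes\iota_{Y_n,Y_m})\circ\Gamma_n\circ(\cl E_{A_m,A_n}\otimes\cl E_{B_m,B_n})$; the level-$m$ finite correlation coming from the compressions $E_m,F_m$ of the lifts is this same finite lift of the realisation of $\Gamma_n$, so the two families coincide, using the finite realisation of $\Gamma_n$ together with the predual identity $\cl E^\infty_{X_n}\circ\iota^{(n)}_{X,\infty}=\id$.

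I expect the main obstacle to be the quantum commuting case, in two respects: first, confirming that the lifted channels still form a commuting pair, which I would route cleanly through Remark \ref{r_ovic_fin_c} rather than attempt directly; and second, checking that the commuting product $E\cdot F$ reconstructs $\tilde\Gamma_n$ itself, and not merely some no-signalling correlation with the correct finite marginals. The associated-family uniqueness argument is what makes the latter tractable, since it reduces the verification to the finite level, where the realisation of $\Gamma_n$ is available by hypothesis. The quantum approximate type is deliberately absent from the statement; it would nonetheless follow formally from the spatial case by BW-continuity of the lift together with the BW-closedness of $\cl C_{\rm qa}$, but it is not needed here.
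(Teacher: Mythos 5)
Your proposal is correct and takes essentially the same route as the paper: the paper's proof likewise lifts a finite realisation $(H,E,F,\xi)$ of $\Gamma_n$ to Cantor channels via $\Phi_{\tilde{E}} = \iota_{X,\infty}^{(n)}\circ \Phi_E\circ \cl E_{A_{n}}^{\infty}|_{C(\Omega_A)}$ (and similarly for $F$), asserts that $(H,\tilde{E},\tilde{F},\xi)$ realises $\tilde{\Gamma}_n$, and dismisses the loc and ns cases as immediate and qs as analogous. Your extra verifications --- commutation of the lifted pair via Remark \ref{r_ovic_fin_c} and identification of the realised correlation with $\tilde{\Gamma}_n$ through uniqueness of the associated inductive family in Theorem \ref{th_indcorresp} --- simply spell out what the paper leaves as \lq\lq straightforward to check''.
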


\begin{proof}
We only include the proof for the case ${\rm t} = {\rm qc}$; 
the case ${\rm t} = {\rm qs}$ is similar and the cases $ \rm t = loc, ns$ are immediate. 
Let $(H,E,F,\xi)$ be a realisation of $\Gamma_n$; thus, 
$\Phi_E : \cl D_{A_n}\to \cl D_{X_n}\otimes\cl B(H)$ and 
$\Phi_F : \cl D_{B_n}\to \cl D_{Y_n}\otimes\cl B(H)$ are unital completely positive map
and $\xi\in H$ is a unit vector.
Let $\tilde{E} \in \frak{C}_\mu(\Omega_A,\Omega_X;H)$ and
$\tilde{F} \in \frak{C}_\mu(\Omega_B,\Omega_Y;H)$ be the operator-valued 
channels, satisfying
$$\Phi_{\tilde{E}} = \iota_{X,\infty}^{(n)}\circ \Phi_E\circ \cl E_{A_{n}}^{\infty}|_{C(\Omega_A)} \ 
\mbox{ and } \ 
\Phi_{\tilde{F}} = \iota_{Y,\infty}^{(n)}\circ \Phi_F\circ \cl E_{B_{n}}^{\infty}|_{C(\Omega_B)}.$$
It is straightforward to check that $(H,\tilde{E},\tilde{F},\xi)$ is a realisation 
of $\tilde{\Gamma}_n$.
\end{proof}

For the formulation of the next theorem, recall once again the notation (\ref{eq_cylinddd})
for cylinders in Cantor spaces.

\begin{theorem} \label{th_qc} 
Let $X = (X_n)_{n\in \bb{N}}$, $Y = (Y_n)_{n\in \bb{N}}$, 
$A = (A_n)_{n\in \bb{N}}$ and $B = (B_n)_{n\in \bb{N}}$ be inductive families of finite sets, and 
let $\Gamma:C(\Omega_A)\otimes C(\Omega_B)\rightarrow L^\infty(\Omega_X)\bar\otimes L^\infty(\Omega_Y)$ be a 
no-signalling correlation. The following are equivalent: 

\begin{enumerate}
    \item[(i)]
$\Gamma \in \cl C_{\rm qc}(X,Y,A,B)$ (resp. $\Gamma \in \cl C_{\rm qa}(X,Y,A,B)$);

\item[(ii)]
there exists a state 
$s : \cl{S}_{X,A}\otimes_{\rm c}\cl{S}_{Y,B}\rightarrow \bb{C}$ 
(resp. $s : \cl{S}_{X,A}\otimes_{\min}\cl{S}_{Y,B}\rightarrow \bb{C}$), 
such that 
     \begin{equation*}\label{eq_onag}
s(\chi_{\tilde{x}\times\tilde{a}} \otimes \chi_{\tilde{y}\times\tilde{b}}) 
=
\lvert X_n\rvert \lvert Y_n\rvert \langle \chi_{\tilde{x}}\otimes \chi_{\tilde{y}},\Gamma(\chi_{\tilde{a}}\otimes \chi_{\tilde{b}})\rangle, 
    \end{equation*}
    for all $x\in X_n$, $y\in Y_n$, $a\in A_n$, $b\in B_n$, and all $n\in \bb{N}$.
    \end{enumerate}
\end{theorem}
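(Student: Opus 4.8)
The plan is to follow the blueprint of Theorem~\ref{th_ns}, replacing the maximal tensor product by the commuting (resp.\ minimal) one and leaning on two facts: the finite-level dictionary of Theorem~\ref{th_orcha}, which matches quantum commuting (resp.\ approximately quantum) correlations over $(X_n,Y_n,A_n,B_n)$ with states on $\cl S_{X_n,A_n}\otimes_{\rm c}\cl S_{Y_n,B_n}$ (resp.\ $\cl S_{X_n,A_n}\otimes_{\min}\cl S_{Y_n,B_n}$); and the fact that $\otimes_{\rm c}$ and $\otimes_{\min}$ commute with inductive limits, so that Lemma~\ref{l_comminl} supplies canonical complete order isomorphisms $\varinjlim(\cl S_{X_n,A_n}\otimes_\tau\cl S_{Y_n,B_n})\cong\cl S_{X,A}\otimes_\tau\cl S_{Y,B}$ for $\tau\in\{{\rm c},\min\}$, with connecting maps $\gamma_{X_n,A_n}\otimes\gamma_{Y_n,B_n}$ and limit maps $\gamma^{(n)}_{X,A}\otimes\gamma^{(n)}_{Y,B}$.

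For the quantum commuting case the argument is a near-verbatim transcription of the proof of Theorem~\ref{th_ns}. For (i)$\Rightarrow$(ii), Theorem~\ref{th_n_qc} guarantees that each $\Gamma_n$ lies in $\cl C_{\rm qc}(X_n,Y_n,A_n,B_n)$, so Theorem~\ref{th_orcha} produces states $s_n$ on $\cl S_{X_n,A_n}\otimes_{\rm c}\cl S_{Y_n,B_n}$; the inductivity of $(\Gamma_n)_{n\in\bb N}$ together with Remark~\ref{r_ind_stat_qc} gives the compatibility $s_{n+1}\circ(\gamma_{X_n,A_n}\otimes\gamma_{Y_n,B_n})=s_n$, whence the universal property of the inductive limit (via Lemma~\ref{l_comminl}) yields a state $s$ on $\cl S_{X,A}\otimes_{\rm c}\cl S_{Y,B}$ with $s\circ(\gamma^{(n)}_{X,A}\otimes\gamma^{(n)}_{Y,B})=s_n$. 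Evaluating on the cylinder generators, using the identification $\gamma^{(n)}_{X,A}(e_{x,a})=\chi_{\tilde x\times\tilde a}$ from (\ref{eq_cylinddd}), reproduces the asserted formula exactly as in Theorem~\ref{th_ns}. For (ii)$\Rightarrow$(i) I would reverse the process: set $s_n=s\circ(\gamma^{(n)}_{X,A}\otimes\gamma^{(n)}_{Y,B})$, obtain quantum commuting $\Gamma_n$ from Theorem~\ref{th_orcha}, note that the family is inductive by Remark~\ref{r_ind_stat_qc}, reassemble $\Gamma$ via Theorem~\ref{th_indcorresp}, and conclude $\Gamma\in\cl C_{\rm qc}$ from Theorem~\ref{th_n_qc}.

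The approximately quantum case is the delicate one, since $\cl C_{\rm qa}$ is defined as a $\mathrm{BW}$-closure rather than through a single realisation, so no direct inductive assembly of a state is available. I would instead exploit that the correspondence $\Gamma\mapsto s_\Gamma$ of Theorem~\ref{th_ns} is a $\mathrm{BW}$--weak$^*$ homeomorphism onto the state space of $\cl S_{X,A}\otimes_{\max}\cl S_{Y,B}$ (as in Remark~\ref{r_BWhomeom}), under which the states on $\cl S_{X,A}\otimes_{\min}\cl S_{Y,B}$ form a weak$^*$-closed subset. A quantum spatial $\Gamma$ restricts at each level to a finite quantum spatial $\Gamma_n$ (same $H\otimes K$ and $\xi$, with the channels restricted), hence to a state on $\otimes_{\min}$ by Theorem~\ref{th_orcha}; assembling these through the minimal inductive limit shows that $s_\Gamma$ is a state on $\otimes_{\min}$. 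Thus $\cl C_{\rm qs}$, and therefore its $\mathrm{BW}$-closure $\cl C_{\rm qa}$, maps into the state space of $\otimes_{\min}$, giving (i)$\Rightarrow$(ii). For the converse, I would disassemble a state $s$ on $\otimes_{\min}$ into finite approximately quantum correlations $\Gamma_n$, write each as a $\mathrm{BW}$-limit of finite quantum spatial correlations, ampliate using Lemma~\ref{l_ampliqc} (case ${\rm t}={\rm qs}$) to land in $\cl C_{\rm qs}(X,Y,A,B)$, and deduce $\tilde\Gamma_n\in\cl C_{\rm qa}(X,Y,A,B)$; since the reconstructed $\Gamma$ is the $\mathrm{BW}$-limit of the ampliations $\tilde\Gamma_n$ (Remark~\ref{r_appuni}) and $\cl C_{\rm qa}$ is $\mathrm{BW}$-closed, $\Gamma\in\cl C_{\rm qa}$.

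I expect the main obstacle to be precisely this last, approximately quantum, step: reconciling the closure definition of $\cl C_{\rm qa}$ with the inductive-limit picture forces one to interleave three limiting processes — the $\mathrm{BW}$-limit defining $\cl C_{\rm qa}$, the $\mathrm{BW}$-limit reconstructing $\Gamma$ from its ampliated restrictions, and the inductive limit of the minimal tensor products — while keeping the $\mathrm{BW}$--weak$^*$ continuity of $\Gamma\mapsto s_\Gamma$ under control throughout. A secondary technical point to verify carefully is that $\otimes_{\rm c}$ genuinely commutes with inductive limits, the hypothesis needed to invoke Lemma~\ref{l_comminl} in the commuting case, in analogy with the role played by \cite[Theorem 4.34]{mt} for $\otimes_{\max}$ in the proof of Theorem~\ref{th_ns}.
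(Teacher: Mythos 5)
Your (ii)$\Rightarrow$(i) directions coincide with the paper's: pulling $s$ back along $\gamma^{(n)}_{X,A}\otimes\gamma^{(n)}_{Y,B}$ uses only functoriality of $\otimes_{\rm c}$ and $\otimes_{\min}$, which is unproblematic, and your subsequent chain (Theorem~\ref{th_orcha}, Remark~\ref{r_ind_stat_qc}, Theorem~\ref{th_indcorresp}, Theorem~\ref{th_n_qc} for qc; Lemma~\ref{l_ampliqc} and Remark~\ref{r_appuni} with BW-closedness of $\cl C_{\rm qa}$ for qa) is exactly the paper's. Your handling of the closure in the qa part of (i)$\Rightarrow$(ii) — the min-states pull back to a weak*-closed subset of the state space, so BW-limits of quantum spatial correlations stay in it — is also equivalent to the paper's subsequence argument.

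The genuine gap is in (i)$\Rightarrow$(ii). You assemble the compatible finite-level states $s_n$ into a state on $\cl S_{X,A}\otimes_{\rm c}\cl S_{Y,B}$ by invoking Lemma~\ref{l_comminl} for $\tau={\rm c}$ (and likewise for $\tau=\min$ in the qs step). You flag the hypothesis that $\otimes_{\rm c}$ commutes with inductive limits as a ``secondary technical point'', but it is the central obstruction and is established nowhere: the paper uses Lemma~\ref{l_comminl} only for $\tau=\max$, where \cite[Theorem 4.34]{mt} supplies the hypothesis, and no analogue for $\otimes_{\rm c}$ is available (the natural route through universal C*-covers fails, since UCP connecting maps do not functorially induce maps between the $C^*_u$'s). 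Concretely, without it the family $(s_n)$ only defines a functional on the dense span of the elements $(\gamma^{(n)}_{X,A}\otimes\gamma^{(n)}_{Y,B})(e_{x,a}\otimes e_{y,b})$, and its positivity on the commuting cone of the limit cannot be certified: $\gamma^{(n)}_{X,A}\otimes\gamma^{(n)}_{Y,B}$ is merely UCP for $\otimes_{\rm c}$, so an element positive in the limit c-cone need not pull back to a positive element at level $n$, and the norm inequality $\|u\|_{\rm c}\leq\|u_n\|_{{\rm c},n}$ runs in the wrong direction for a boundedness argument. (The min version could be patched via injectivity of $\otimes_{\min}$ plus the complete-order-embedding property of the $\gamma$'s, but the latter is exactly Remark~\ref{p_gammacoi}, which the paper deliberately leaves unproved and unused; no such patch exists for $\otimes_{\rm c}$.) The paper's actual proof of (i)$\Rightarrow$(ii) avoids all of this: starting from a realisation $(H,E,F,\xi)$ of $\Gamma$, it lifts the associated inductive channel families to UCP maps $\tilde\Phi:\cl S_{X,A}\to\cl B(H)$ and $\tilde\Psi:\cl S_{Y,B}\to\cl B(H)$ by the universal property of the inductive limit (Theorem~\ref{c_statSXA}, Remark~\ref{r_forfinnew}), checks via Lemma~\ref{lem:commu_corresp} that $(\tilde\Phi,\tilde\Psi)$ is a commuting pair, and sets $s(u)=\langle(\tilde\Phi\cdot\tilde\Psi)(u)\xi,\xi\rangle$; positivity on $\otimes_{\rm c}$ is then immediate from the definition of the commuting cone, and in the qs case one uses $\tilde\Phi\otimes\tilde\Psi$ on $\otimes_{\min}$ instead. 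You should replace your state-assembly by this realisation-based construction.
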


\begin{proof}
We first establish the equivalence in the quantum commuting case.

\smallskip

(i)$\Rightarrow$(ii) 
Assume that 
$\Gamma\in\cl{C}_{\rm qc}(X,Y,A,B)$, and let 
$H$ be a separable Hilbert space, 
$\xi\in H$ be a unit vector, and 
$E\in\frak{C}_{\mu_X}(\Omega_A,\Omega_X;H)$  and $F\in\frak{C}_{\mu_Y}(\Omega_B,\Omega_Y;H)$ 
be operator-valued channels, satisfying (\ref{eq_eqforqcde}).
Let 
$\Phi_{E} : C(\Omega_A)\rightarrow L^\infty(\Omega_X)\bar{\otimes} \cl{B}(H)$
and 
$\Phi_{F} : C(\Omega_B)\rightarrow L^\infty(\Omega_Y)\bar{\otimes} \cl{B}(H)$
be the unital completely positive maps, associated with $E$ and $F$, respectively, via 
Theorem \ref{mod_mu}.
Let $(\Phi_n)_{n\in \bb{N}}$ and $(\Psi_n)_{n\in \bb{N}}$ be the inductive families, associated with 
$\Phi_E$ and $\Phi_F$, respectively, via (\ref{eq_gammanst}). 
Let 
$\tilde{\Phi}_n : \cl S_{X_n,A_n}\to \cl{B}(H)$ and 
and 
$\tilde{\Psi}_n : \cl S_{Y_n,B_n}\to \cl{B}(H)$ be the 
unital completely positive maps, arising from $\Phi_n$ and $\Psi_n$, respectively, through
Remark \ref{r_forfinnew}. 
It follows from the proof of Theorem \ref{c_statSXA} that 
$$\tilde{\Phi}_n = \tilde{\Phi}_{n+1}\circ \gamma_{X_n,A_n} \ \mbox{ and } \ 
\tilde{\Psi}_n = \tilde{\Psi}_{n+1}\circ \gamma_{Y_n,B_n}, \ \ \ n\in \bb{N}.$$
By the universal property of the operator system inductive limit, there exist 
unital completely positive maps 
$\tilde{\Phi} : \cl S_{X,A}\to \cl{B}(H)$ and 
and 
$\tilde{\Psi} : \cl S_{Y,B}\to \cl{B}(H)$, such that 
$$
\tilde{\Phi} \circ \gamma^{(n)}_{X,A} = \tilde{\Phi}_n \ \mbox{ and } \ 
\tilde{\Psi} \circ \gamma^{(n)}_{Y,B} = \tilde{\Psi}_n, \ \ \ n\in \bb{N}.
$$
Since the pair $(\Phi_{E},\Phi_{F})$ is commuting, so is $(\tilde{\Phi},\tilde{\Psi})$.
By the definition of the operator system commuting tensor product, the map 
$\tilde{\Phi}\cdot \tilde{\Psi} : \cl S_{X,A}\otimes_{\rm c} \cl S_{Y,B}\to \cl B(H)$ is 
(unital and) completely positive. 
Let $s :  \cl S_{X,A}\otimes_{\rm c} \cl S_{Y,B} \to \bb{C}$ be the state, given by 
$$s(u) = \langle (\tilde{\Phi}\cdot \tilde{\Psi})(u),\xi,\xi\rangle, \ \ \ u\in \cl S_{X,A}\otimes_{\rm c} \cl S_{Y,B}.$$

Set 
$s_n := s\circ (\gamma^{(n)}_{X}\otimes \gamma^{(n)}_{A})$; thus, $s_n$ is a state on
$\cl S_{X_n,A_n}\otimes_{\rm c}\cl S_{Y_n,B_n}$. 
Let $\Gamma_n$ be given via (\ref{Gamma_n}), $n\in \bb{N}$, 
and observe that, if $x\in X_n$, $y\in Y_n$, $a\in A_n$ and $b\in B_n$, then 
\begin{eqnarray*}
s(\chi_{\tilde{x}\times\tilde{a}} \otimes \chi_{\tilde{y}\times\tilde{b}}) 
& = & 
s_n(e_{x,a}\otimes e_{y,b})
= 
\langle \tilde{\Phi}_n(e_{x,a}) \tilde{\Psi}_n(e_{y,b})\xi,\xi \rangle\\
& = & 
\langle L_{\delta_x}(\Phi_n(\delta_a)) L_{\delta_y}(\Psi_n(\delta_b))\xi,\xi \rangle\\
& = & 
|X_n||Y_n| \langle \delta_x\otimes\delta_y, \Gamma_n(\delta_a\otimes\delta_b)\rangle\\
& = & 
\lvert X_n\rvert \lvert Y_n\rvert \langle (\iota_{X,1}^{(n)}\otimes \iota_{Y,1}^{(n)})(\delta_x\otimes\delta_y),\Gamma(\chi_{\tilde{a}}\otimes \chi_{\tilde{b}})\rangle\\
& = & 
\lvert X_n\rvert \lvert Y_n\rvert \langle \chi_{\tilde{x}}\otimes \chi_{\tilde{y}},\Gamma(\chi_{\tilde{a}}\otimes \chi_{\tilde{b}})\rangle.
\end{eqnarray*}

(ii)$\Rightarrow$(i)
 Setting     $$s_n=s\circ (\gamma^{(n)}_{X,A}\otimes \gamma^{(n)}_{Y,B}),\ \ n\in\bb{N},$$ we obtain a family $(s_n)_{n\in \bb{N}}$ of states, where $s_n:\cl{S}_{X_n,A_n}\otimes_{\rm c}\cl{S}_{Y_n,B_n}\rightarrow\bb{C}$, $n\in\bb{N}$, satisfying (\ref{ind_stat_qc}) and therefore quantum commuting correlations $\Gamma_n$ over $(X_n,Y_n,A_n,B_n)$ which by Remark \ref{r_ind_stat_qc} form an inductive family. Theorem \ref{th_indcorresp} gives rise to a unital completely positive map $\Gamma:C(\Omega_A)\otimes C(\Omega_B)\rightarrow L^{\infty}(\Omega_X)\bar{\otimes} L^\infty(\Omega_Y)$  that satisfies
\begin{align*}
(\cl{E}_{X_n}^{\infty}\otimes \cl{E}_{Y_n}^{\infty})\circ\Gamma\circ
(\iota_{A}^{(n)}\otimes \iota_{B}^{(n)}) = \Gamma_n, \ \ \ n\in \bb{N}.
\end{align*} 
By Theorems \ref{th_orcha} and \ref{th_n_qc}, 
$\Gamma\in\cl{C}_{\rm qc}(X,Y,A,B)$.

\smallskip

We now consider the approximately quantum case. 
Assume that the implication (i)$\Rightarrow$(ii) holds in the case 
where $\Gamma\in \cl C_{\rm qs}(X,Y,A,B)$; to conclude it in the full 
generality, let $\Gamma\in \cl C_{\rm qa}(X,Y,A,B)$ and 
$(\Gamma^{(k)})_{k\in \bb{N}} \subseteq \cl C_{\rm qs}(X,Y,A,B)$
be a sequence with BW limit $\Gamma$. 
Let $s_k : \cl S_{X,A}\otimes_{\min}\cl S_{Y,B} \to \bb{C}$ be a state 
yielding $\Gamma^{(k)}$, $k\in \bb{N}$. 
Let $(s_{k_l})_{l\in \bb{N}}$ be a subsequence such that $s_{k_l}\to_{l\to \infty} s$
in the weak* topology; then 
$s(\gamma_{X,A}^{(k_l)}(e_{x,a})\otimes \gamma_{Y,B}^{(k_l)}(e_{y,b}))$ agrees with 
$|X_{k_l}||Y_{k_l}|\langle \chi_{\tilde{x}}\otimes \chi_{\tilde{y}},\Gamma(\chi_{\tilde{a}}\otimes \chi_{\tilde{b}})\rangle$ for all 
$x\in X_{k_l}$, $y\in Y_{k_l}$, $a\in A_{k_l}$ and $b\in B_{k_l}$ and all $l\in \bb{N}$. 
By uniform boundedness, the density of the linear span of the elements 
$\chi_{\tilde{a}}\otimes \chi_{\tilde{b}}$ in $C(\Omega_A)\otimes C(\Omega_B)$, 
and the weak* density of the elements $\chi_{\tilde{x}}\otimes \chi_{\tilde{y}}$ in $L^{\infty}(\Omega_X)\bar\otimes L^{\infty}(\Omega_Y)$, we conclude that 
$s = s_{\Gamma}$. 

To complete the proof of the implication (i)$\Rightarrow$(ii), 
we note that in the case where
$\Gamma\in \cl C_{\rm qs}(X,Y,A,B)$ 
the statement follows readily by inspecting the proof of the same 
implication in the quantum commuting case, working with tensor, instead of operator products, 
and using the fact that unital completely positive maps on the individual terms 
tensor to a unital completely positive map on the minimal operator system tensor product. 

For the implication (ii)$\Rightarrow$(i) in the 
approximately quantum case,
let $s : \cl{S}_{X,A}\otimes_{\min}\cl{S}_{Y,B}\rightarrow \bb{C}$ be a state, and 
$s_n = s\circ (\gamma_{X,A}^{(n)}\otimes_{\min} \gamma_{Y,B}^{(n)})$; thus, 
$s_n$ is a state on $\cl{S}_{X_n,A_n}\otimes_{\min}\cl{S}_{Y_n,B_n}$, $n\in \bb{N}$.
By Theorem \ref{th_orcha}, $s_n$ gives rise to a no-signalling correlation 
$\Gamma_n : \cl D_{A_n}\otimes\cl D_{B_n}\to \cl D_{X_n}\otimes\cl D_{Y_n}$
of approximately quantum type. 
Let $\tilde{\Gamma}_n : C(\Omega_{A})\otimes C(\Omega_{B})
\to L^{\infty}(\Omega_{X})\bar{\otimes} L^{\infty}(\Omega_{Y})$ be the 
map given by letting
\begin{equation}\label{eq_forti}
\tilde{\Gamma}_n = (\iota_{X,\infty}^{(n)}\otimes \iota_{Y,\infty}^{(n)})\circ \Gamma_n\circ (\cl E_{A_{n}}^{\infty} 
\otimes \cl E_{B_{n}}^{\infty})|_{C(\Omega_A)\otimes C(\Omega_B)}.
\end{equation}
Using the argument from Remark \ref{r_appuni}, we see that
$\tilde{\Gamma}_n\to_{n\to \infty} \Gamma$ in the BW topology. 

It therefore suffices to show that 
$\tilde{\Gamma}_n\in \cl C_{\rm qa}(X,Y,A,B)$, $n\in \bb{N}$. 
To see that, fix $n\in \bb{N}$, and let $(\Theta_k)_{k\in \bb{N}}\subseteq \cl C_{\rm qs}(X_n,Y_n,A_n,B_n)$ be a sequence, such that $\Theta_k\to_{k\to \infty} \Gamma_n$. 
It is straightforward to see that, if $\tilde{\Theta}_k$ arises from $\Theta_k$ as in 
(\ref{eq_forti}) then $\tilde{\Theta}_k\to_{k\to \infty} \tilde{\Gamma}_n$ in the BW topology. 
On the other hand, if $H$ and $K$ are separable Hilbert spaces, 
$\xi\in H\otimes K$ is a unit vector, and  
$E\in\frak{C}_{\mu_{{X_n}}}(A_n,X_n;H)$ and $F\in\frak{C}_{\mu_{Y_n}}(B_n,Y_n;K)$ are operator-valued channels 
such that 
$$\langle\Theta_k(\delta_a\otimes\delta_b),\delta_x\otimes\delta_y\rangle
= \langle \delta_x\otimes\delta_y\otimes\xi\xi^*, (\Phi_E\otimes\Phi_F)(\delta_a\otimes\delta_b)\rangle,$$
then $\tilde{\Theta}_k$ has the form (\ref{eq_forminnn})
for the channels $\tilde{E}$ and $\tilde{F}$, satisfying 
\begin{equation}\label{eq_onsing}
\Phi_{\tilde{E}} = \iota_{X,\infty}^{(n)}\circ \Phi_E\circ \cl E_{A_{n}}^{\infty}|_{C(\Omega_A)} \ 
\mbox{ and } \ 
\Phi_{\tilde{F}} = \iota_{Y,\infty}^{(n)}\circ \Phi_F\circ \cl E_{B_{n}}^{\infty}|_{C(\Omega_B)}.
\end{equation}
The proof is complete. 
\end{proof}

The next statement, which is a straightforward 
consequence of Theorem \ref{th_qc}, complements
Theorem \ref{th_n_qc} in the approximately quantum 
case.

\begin{corollary}\label{r_qacase}
Let $\Gamma:C(\Omega_A)\otimes C(\Omega_B)\rightarrow L^\infty(\Omega_X)\bar\otimes L^\infty(\Omega_Y)$ be a unital completely positive map and 
$(\Gamma_n)_{n\in \bb N}$ be its associated inductive family of maps. Then 
$\Gamma\in\cl{C}_{\rm qa}(X,Y,A,B)$ if and only if
$\Gamma_n\in\cl{C}_{\rm qa}(X_n,Y_n,A_n,B_n)$ for every $n\in\bb{N}$.  
\end{corollary}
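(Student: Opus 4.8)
The plan is to deduce the statement from the operator-system characterisation of approximately quantum Cantor correlations furnished by Theorem \ref{th_qc}, combined with the finite-level characterisation of Theorem \ref{th_orcha}. Throughout, $(\Gamma_n)_{n\in\bb N}$ denotes the associated inductive family of $\Gamma$, so that $\Gamma_n=(\cl E^\infty_{X_n}\otimes\cl E^\infty_{Y_n})\circ\Gamma\circ(\iota^{(n)}_A\otimes\iota^{(n)}_B)$.

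For the forward implication, suppose $\Gamma\in\cl C_{\rm qa}(X,Y,A,B)$. First I would invoke Theorem \ref{th_qc} to obtain a state $s:\cl S_{X,A}\otimes_{\min}\cl S_{Y,B}\to\bb C$ satisfying the matching identity on cylinder tensors. Since $\gamma^{(n)}_{X,A}$ and $\gamma^{(n)}_{Y,B}$ are unital and completely positive, functoriality of the minimal tensor product yields a unital completely positive map $\gamma^{(n)}_{X,A}\otimes_{\min}\gamma^{(n)}_{Y,B}$, and I would set $s_n=s\circ(\gamma^{(n)}_{X,A}\otimes_{\min}\gamma^{(n)}_{Y,B})$, a state on $\cl S_{X_n,A_n}\otimes_{\min}\cl S_{Y_n,B_n}$. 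By Theorem \ref{th_orcha}, $s_n$ corresponds to an approximately quantum correlation over $(X_n,Y_n,A_n,B_n)$; evaluating on generators and using $\gamma^{(n)}_{X,A}(e_{x,a})=\chi_{\tilde x\times\tilde a}$ together with the matching property of $s$ gives $s_n(e_{x,a}\otimes e_{y,b})=\lvert X_n\rvert\lvert Y_n\rvert\langle\chi_{\tilde x}\otimes\chi_{\tilde y},\Gamma(\chi_{\tilde a}\otimes\chi_{\tilde b})\rangle=p_{\Gamma,n}(a,b|x,y)$. Hence the correlation attached to $s_n$ is exactly $\Gamma_n$, so $\Gamma_n\in\cl C_{\rm qa}(X_n,Y_n,A_n,B_n)$ for every $n$.

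For the converse, the natural temptation—to glue the finite-level states $s_n$ into a single state on $\cl S_{X,A}\otimes_{\min}\cl S_{Y,B}$ via the universal property of the inductive limit—fails, since the minimal tensor product does not commute with inductive limits; this is precisely why Theorem \ref{th_qc} handles the approximately quantum case by a limiting procedure rather than through Lemma \ref{l_comminl}. I would therefore argue by a BW-approximation. Assuming each $\Gamma_n\in\cl C_{\rm qa}(X_n,Y_n,A_n,B_n)$, for every $n$ I would form the amplified correlation $\tilde\Gamma_n=(\iota^{(n)}_{X,\infty}\otimes\iota^{(n)}_{Y,\infty})\circ\Gamma_n\circ(\cl E^\infty_{A_n}\otimes\cl E^\infty_{B_n})|_{C(\Omega_A)\otimes C(\Omega_B)}$. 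Exactly as in the approximately quantum case of the proof of Theorem \ref{th_qc}, each $\tilde\Gamma_n$ lies in $\cl C_{\rm qa}(X,Y,A,B)$: writing $\Gamma_n$ as a BW-limit of quantum spatial finite correlations $\Theta_k$, the corresponding amplifications $\tilde\Theta_k\in\cl C_{\rm qs}(X,Y,A,B)$ (by Lemma \ref{l_ampliqc} in the case ${\rm t}={\rm qs}$) converge in the BW topology to $\tilde\Gamma_n$, whence $\tilde\Gamma_n\in\overline{\cl C_{\rm qs}}^{\rm BW}=\cl C_{\rm qa}(X,Y,A,B)$.

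Finally, since $(\Gamma_n)_{n\in\bb N}$ is the inductive family associated with $\Gamma$, the argument of Remark \ref{r_appuni}, applied in its bipartite form, shows that $\tilde\Gamma_n\to\Gamma$ in the BW topology. As $\cl C_{\rm qa}(X,Y,A,B)=\overline{\cl C_{\rm qs}(X,Y,A,B)}^{\rm BW}$ is BW-closed, it follows that $\Gamma\in\cl C_{\rm qa}(X,Y,A,B)$, completing the proof. The one genuinely non-formal point, and the step I expect to be the main obstacle, is the backward direction's dependence on approximation: the failure of the minimal tensor product to commute with inductive limits forces the detour through the amplifications $\tilde\Gamma_n$ and the BW-closedness of $\cl C_{\rm qa}$, rather than the direct state-gluing available in the maximal and commuting cases.
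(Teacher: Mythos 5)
Your proof is correct and follows essentially the same route as the paper, which records this corollary as a straightforward consequence of Theorem \ref{th_qc}: your forward direction is the restriction of the minimal-tensor state along $\gamma^{(n)}_{X,A}\otimes\gamma^{(n)}_{Y,B}$ combined with Theorem \ref{th_orcha}, and your backward direction reproduces exactly the amplification and BW-closedness argument from the approximately quantum part of the proof of Theorem \ref{th_qc} (the $\tilde\Gamma_n$ of (\ref{eq_forti}), the finite-level quantum spatial approximants $\Theta_k$ amplified via Lemma \ref{l_ampliqc} for ${\rm t}={\rm qs}$, and the convergence $\tilde\Gamma_n\to\Gamma$ from Remark \ref{r_appuni}). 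One small caveat: your parenthetical assertion that the minimal tensor product \emph{does not} commute with inductive limits is not established in the paper, which only invokes such commutation for the maximal tensor product via \cite[Theorem 4.34]{mt} and Lemma \ref{l_comminl}; fortunately, nothing in your argument depends on that claim, since you correctly route the converse through the BW approximation rather than through state gluing.
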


\begin{remark}\label{r_locqson}
\rm 
It is straightforward to see that, if 
$\Gamma\in\cl{C}_{\rm qs}(X,Y,A,B)$ (resp.\,$\Gamma\in\cl{C}_{\rm loc}(X,Y,A,B)$)
then 
$\Gamma_n\in\cl{C}_{\rm qs}(X_n,Y_n,A_n,B_n)$ 
(resp.\,$\Gamma_n\in\cl{C}_{\rm loc}(X,Y,A,B)$) for every $n\in\bb{N}$.
We finish this section by showing that Corollary \ref{r_qacase} does not hold in the quantum spatial case and that the class $\cl{C}_{\rm qs}(X,Y,A,B)$ is not closed. 
\end{remark}

\begin{theorem}\label{th_qs}
There exist inductive families of finite sets
$X = (X_n)_{n\in \bb{N}}$, $Y = (Y_n)_{n\in \bb{N}}$, 
$A = (A_n)_{n\in \bb{N}}$ and $B = (B_n)_{n\in \bb{N}}$ 
and a no-signalling correlation $\Gamma: C(\Omega_A)\otimes C(\Omega_B)\to L^\infty(\Omega_X)\bar\otimes L^\infty(\Omega_Y)$ such that, if 
$(\Gamma_n)_{n\in\mathbb N}$ is its associated inductive family, then
$\Gamma_n\in \cl C_{\rm qs}(X_n,Y_n,A_n,B_n)$ for every $n\in \bb{N}$, but $\Gamma\not\in \cl C_{\rm qs}(X,Y,A,B)$. 
\end{theorem}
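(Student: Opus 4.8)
The plan is to deduce the failure of the projection property for the quantum spatial type, together with the non-closedness of $\cl C_{\rm qs}$, from the non-closedness of the quantum spatial class over finite sets. I would begin by invoking the known fact (due to Slofstra) that over finite sets the quantum spatial class is not closed: fix finite sets $S,T,U,V$, a correlation $p^\sharp\in\cl C_{\rm qa}(S,T,U,V)\setminus\cl C_{\rm qs}(S,T,U,V)$, and quantum spatial correlations $p^{(k)}$ with $p^{(k)}\to p^\sharp$. The conceptual target is a single Cantor correlation $\Gamma$, built over suitable inductive families $X,Y,A,B$, whose truncations $\Gamma_n$ are quantum spatial while $\Gamma$ itself reproduces the non-spatial $p^\sharp$; by Remark \ref{r_locqson} only the forward implication can survive, so exhibiting such a $\Gamma$ simultaneously disproves the analogue of Corollary \ref{r_qacase} and shows $\cl C_{\rm qs}(X,Y,A,B)$ is not closed.

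Once the truncations are arranged to be quantum spatial, membership $\Gamma\in\cl C_{\rm qa}(X,Y,A,B)$ is automatic: by Lemma \ref{l_ampliqc} the amplifications $\tilde\Gamma_n$ lie in $\cl C_{\rm qs}(X,Y,A,B)$, and by the argument of Remark \ref{r_appuni} they converge to $\Gamma$ in the BW topology, whence $\Gamma\in\overline{\cl C_{\rm qs}}^{\rm BW}=\cl C_{\rm qa}(X,Y,A,B)$. The substance of the proof therefore lies in two places: engineering the inductive family so that each $\Gamma_n$ is genuinely quantum spatial, and proving that the limit $\Gamma$ is not.

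Making the truncations quantum spatial is where the construction must be chosen with care rather than by a naive device. The simplest candidates fail: a correlation that flags its inputs and runs the distinct $p^{(k)}$ on different input branches is in general \emph{not} quantum spatial, since a single shared state cannot exactly reproduce several inequivalent correlations under input-controlled measurements; conversely, flagging the \emph{outputs} and taking the orthogonal direct sum $\delta_{k,k'}w_k\,p^{(k)}$ is quantum spatial, but remains so in the limit (a convergent direct sum of quantum spatial correlations is quantum spatial), so it never leaves $\cl C_{\rm qs}$ and never sees $p^\sharp$. The construction should instead arrange that the truncations are quantum spatial while their realisations use growing, mutually incompatible entanglement whose only coherent limit is the non-spatial $p^\sharp$ --- for instance by tuning the branching numbers and the transition maps $\gamma_{X_n,A_n}\otimes\gamma_{Y_n,B_n}$ so that the averaging built into the truncation turns each $\Gamma_n$ into a \emph{convex combination} (hence quantum spatial, $\cl C_{\rm qs}$ being convex) of $p^{(1)},\dots,p^{(n)}$ with weights forcing $\Gamma_n\to p^\sharp$ in the manner of a Ces\`aro mean.

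Proving $\Gamma\notin\cl C_{\rm qs}(X,Y,A,B)$ is then the crux, and the step I expect to be the main obstacle. The natural strategy is to show that a separable spatial realisation $(H,K,\xi,E,F)$ of $\Gamma$ would, by specialising its measurable channels along the branches and passing to a limit over the levels, yield a spatial realisation of $p^\sharp$, contradicting $p^\sharp\notin\cl C_{\rm qs}$. The essential difficulty --- and the reason the quantum commuting argument of Theorem \ref{th_n_qc} has no spatial counterpart --- is that the spatial tensor form is precisely what is \emph{not} preserved under the ultraproduct and weak* limits used there: against a fixed, possibly infinitely entangled vector the pairing $\langle(P_u\otimes Q_v)\xi,\xi\rangle$ is only separately weak*-continuous, which is the very mechanism underlying non-closedness. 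Closing this gap will require exploiting the specific structure of the chosen witness $p^\sharp$ (so that the branchwise operators cluster in a topology for which the pairing with $\xi$ is continuous), or else replacing the limiting argument by an obstruction that uses the measurability demanded of a channel over the uncountable Cantor input space --- a constraint that is vacuous at each finite truncation.
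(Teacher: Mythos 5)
Your proposal shares the paper's skeleton — start from Slofstra's witness $p\in\cl C_{\rm qa}\setminus\cl C_{\rm qs}$ over finite sets with $\cl C_{\rm qs}\ni p_n\to p$, build a Cantor correlation whose truncations are quantum spatial, and deduce non-closedness via the amplifications of Lemma \ref{l_ampliqc} and the BW-convergence argument of Remark \ref{r_appuni} (this is exactly how the paper gets Corollary \ref{c_clofqs}) — but it stops short of a proof at both of the points you yourself identify as the substance. First, no construction is actually produced. Your Ces\`aro/convex-combination idea conflates objects: $\Gamma_n$ is a correlation over the growing quadruple $(X_n,Y_n,A_n,B_n)$, so ``$\Gamma_n\to p^\sharp$'' is not meaningful as stated, and you never verify the exact conditional-expectation compatibility (\ref{eq_indrel}), which is a rigid constraint, not a tuning parameter. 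The paper's device is simpler than any of the candidates you consider and reject: take $X_n=\bb X^n$ etc.\ and define states on the minimal tensor products by $s_1=f_1$ and $s_{n+1}=s_n\otimes f_{n+1}$ (product along coordinates), where $f_n$ is the state of $p_n$ via Theorem \ref{th_orcha}. Then $\Gamma_n$ is the product correlation $p_1\times\cdots\times p_n$, which is quantum spatial as a tensor product of quantum spatial correlations; inductivity $s_{n+1}\circ(\gamma_{X_n,A_n}\otimes\gamma_{Y_n,B_n})=s_n$ holds automatically because averaging $f_{n+1}$ over the appended coordinate gives $1$ (Lemma \ref{lem_ind_stat}); and well-definedness of $s_{n+1}$ uses the $*$-homomorphism $\cl A_{X_{n+1},A_{n+1}}\to\cl A_{X_n,A_n}\otimes_{\min}\cl A_{\bb X,\bb A}$, $e_{xx',aa'}\mapsto e_{x,a}\otimes e_{x',a'}$.

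Second, and more importantly, the step you leave open — ruling out a spatial realisation of $\Gamma$ — is left open for a reason that is in fact incorrect. For a \emph{fixed} unit vector $\xi\in H\otimes K$ with Schmidt decomposition $\xi=\sum_i\lambda_i f_i\otimes g_i$, $\sum_i\lambda_i^2=1$, the form $(E,F)\mapsto\langle(E\otimes F)\xi,\xi\rangle=\sum_{i,j}\lambda_i\lambda_j\langle Ef_i,f_j\rangle\langle Fg_i,g_j\rangle$ \emph{is} jointly weak*-continuous on bounded sets: truncating the Schmidt series makes the tail contributions uniformly small over the bounded net (Cauchy--Schwarz against the square-summable $(\lambda_i)$), and the finite block converges entrywise. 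The mechanism behind non-closedness of $\cl C_{\rm qs}$ is the freedom to vary the state $\xi_n$, not a failure of joint continuity at fixed $\xi$ — so your ``main obstacle'' dissolves precisely because a single spatial realisation of $\Gamma$ pins down one $(H,K,\xi)$ for all levels. This is what the paper exploits: assuming $\Gamma\in\cl C_{\rm qs}$, slicing the realisation along the cylinder sets $\Lambda_x^n=\{(x_i)_i:x_n=x\}$ produces POVMs $E^n_{x,a}\in\cl B(H)$, $F^n_{y,b}\in\cl B(K)$ with $\langle(E^n_{x,a}\otimes F^n_{y,b})\xi,\xi\rangle=p_n(a,b|x,y)$ \emph{exactly} (the product structure of $s$ makes all other coordinates marginalise to $1$); weak* cluster points $E_{x,a},F_{y,b}$ are again POVMs and, by the joint continuity just described, satisfy $\langle(E_{x,a}\otimes F_{y,b})\xi,\xi\rangle=p(a,b|x,y)$ (equation (\ref{eq_Epntop})), a spatial realisation of $p$ — contradiction. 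Neither your alternative suggestion (a measurability obstruction over the Cantor input space) nor extra structure on the witness is needed; the gap in your argument closes in a few lines once the product construction is in place.
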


\begin{proof}
By \cite{slofstra},  there exist   finite sets $\mathbb X$, $\mathbb Y$, $\mathbb A$, $\mathbb B$, such that $\cl C_{\rm qs}(\mathbb X,\mathbb Y,\mathbb A,\mathbb B)$ is not closed, that is, there exist correlations $p_n\in\cl C_{\rm qs}(\mathbb X,\mathbb Y,\mathbb A,\mathbb B)$, $n\in\mathbb N$,  and  a correlation $p\in \cl C_{\rm qa}(\mathbb X,\mathbb Y,\mathbb A,\mathbb B)\setminus \cl C_{\rm qs}(\mathbb X,\mathbb Y,\mathbb A,\mathbb B)$ such that $p_n(a,b|x,y)\to p(a,b|x,y)$ for all $a$, $b$, $x$, $y$.

Let $f$ and  $f_n$ be the states  on $\cl S_{\mathbb X,\mathbb A}\otimes_{\rm min}\cl S_{\mathbb Y,\mathbb B}$ yielding $p$ and $p_n$ respectively via Theorem \ref{th_orcha}; thus, 
$$p(a,b|x,y)=f(e_{x,a}\otimes e_{y,b}) \ \mbox{ and } p_n(a,b|x,y)=f_n(e_{x,a}\otimes e_{y,b})$$
for all $n\in \bb{N}$. 
Set $X_n=\prod_{i=0}^{n-1}\mathbb X$ and let $X=(X_n)_{n\in\mathbb N}$ be the corresponding inductive family of sets; define the families $Y$, $A$, $B$ similarly.  Let  $s_n : \cl S_{X_n,A_n}\otimes_{\rm min} \cl S_{Y_n,B_n} \to \bb{C}$, $n\in \bb{N}$, 
be the linear maps, defined inductively by letting $s_1=f_1$ and 
$$s_{n+1}\left(e_{(xx'),(aa')}\otimes e_{(yy'),(bb')}\right) = s_{n}(e_{x,a}\otimes e_{y,b})f_{n+1}(e_{x',a'}\otimes e_{y',b'}),$$ 
where $x\in X_{n}$, $x'\in \mathbb X$, $y\in Y_{n}$, $y'\in \mathbb Y$, $a\in A_{n}$, $a'\in \mathbb A$ and $b\in B_{n}$, $b'\in \mathbb 
B$. 
To see that the maps $s_n$ are well-defined, we refer to the 
commutativity of the minimal tensor product and the 
universal property of the C*-algebra $\cl A_{X_{n+1},A_{n+1}}$, 
according to which the map
$$\cl A_{X_{n+1},A_{n+1}} \mapsto \cl A_{X_{n}, A_{n}}\otimes_{\rm min} \cl A_{\mathbb X,\mathbb A}; \ \ \ e_{xx',aa'}\mapsto e_{x,a}\otimes e_{x',a'},$$
gives rise to a $*$-homomorphism that restricts to 
a unital completely positive map from $\cl S_{X_{n+1},A_{n+1}}$ to $\cl S_{X_n,A_n}\otimes_{\rm min} \cl S_{\mathbb X,\mathbb A}$, $n\in \bb{N}$. 
Moreover, 
\begin{eqnarray*}
&&s_{n+1}(\gamma_{X_n,A_n}(e_{x,a})\otimes\gamma_{Y_n,B_n}(e_{y,b}))\\&&=\frac{1}{|\mathbb X||\mathbb Y|}
\sum_{x'\in \bb{X}} \sum_{y'\in \bb{Y}}
\sum_{a'\in \bb{A}} \sum_{b'\in \bb{B}}
s_{n+1}(e_{(xx'),(aa')}\otimes e_{(yy'),(bb')})\\&&=\frac{1}{|\mathbb X||\mathbb Y|}
\sum_{x'\in \bb{X}} \sum_{y'\in \bb{Y}}
\sum_{a'\in \bb{A}} \sum_{b'\in \bb{B}}
s_{n}(e_{x,a}\otimes e_{y,b})f_{n+1}(e_{x',a'}\otimes e_{y',b'}) =s_{n}(e_{x,a}\otimes e_{y,b}).
\end{eqnarray*}
By Lemma \ref{lem_ind_stat}, the family $(s_n)_{n\in\mathbb N}$ gives rise to the inductive family of correlations $(\Gamma_n)_{n\in \mathbb N}$. 
It is easy to see that, since $p_n$ is of quantum spatial type, so is $\Gamma_n$, $n\in \bb{N}$. 
Let $\Gamma: C(\Omega_A)\otimes C(\Omega_B)\to L^\infty(\Omega_X)\bar\otimes L^\infty(\Omega_Y)$ be the unique unital completely positive map associated to the sequence $(\Gamma_n)_{n\in\mathbb N}$ and $s: \cl S_{X,A}\otimes_{\rm min}\cl S_{Y,B}\to\mathbb C$ be the corresponding state, arising via Theorem \ref{th_qc}. Then $s\circ(\gamma_{X,A}^{(n)}\otimes\gamma_{Y,B}^{(n)})=s_n$, $n\in \bb{N}$. 

We now show that $\Gamma\not\in \cl C_{\rm qs}(X,Y,A,B)$.  
Fix $x\in \mathbb X$, $y\in \mathbb Y$, $a\in \mathbb A$ and $b\in \mathbb B$, consider the sets $\Lambda_x^n=\{(x_i)_i\in\Omega_X: x_n=x\}$ and define $\Lambda_a^n$, $\Lambda_y^n$ and $\Lambda_b^n$ similarly. 
Assuming, towards a contradiction, that $\Gamma$ is in $\cl C_{\rm qs}(X,Y,A,B)$ we can find Hilbert spaces $H$ and $K$, unital completely positive maps $\Phi: C(\Omega_A)\to L^\infty(\Omega_X)\bar\otimes \cl B(H)$ and $\Psi: C(\Omega_B)\to L^\infty(\Omega_Y)\bar\otimes\cl B(K)$, 
and a unit vector $\xi\in H\otimes K$, such that 
\begin{eqnarray*}
&&\langle\chi_{\Lambda_x^n}\otimes\chi_{\Lambda_y^n}\otimes\xi\xi^*,\Phi(\chi_{\Lambda_a^n})\otimes\Psi(\chi_{\Lambda_b^n})\rangle\\
&& =\langle\chi_{\Lambda_x^n}\otimes\chi_{\Lambda_y^n},\Gamma(\chi_{\Lambda_a^n}\otimes\chi_{\Lambda_b^n})\rangle
=
s(\chi_{\Lambda_x^n\times\Lambda_a^n}\otimes \chi_{\Lambda_y^n\times\Lambda_b^n})\\
&& =
\sum_{x',y',a',b'} 
s((\gamma_{X,A}^{(n)}\otimes\gamma_{Y,B}^{(n)})(e_{(x'x), (a'a)}\otimes e_{(y'y),(b'b)}))\\
&&=
\sum_{x',y',a',b'} 
s_n(e_{(x'x),(a'a)}\otimes e_{(y'y),(b'b)})\\
&&=
\sum_{x',y',a',b'} 
s_{n-1}(e_{x',a'}\otimes e_{y',b'})f_n(e_{x,a}\otimes e_{y,b})
=
|X_{n-1}||Y_{n-1}|f_n(e_{x,a}\otimes e_{y,b}),
\end{eqnarray*}
where the summation is over $(x',y',a',b')\in X_{n-1}\times Y_{n-1}\times A_{n-1}\times B_{n-1}$.

Let $E_{x,a}^n=\frac{1}{|X_{n-1}|}L_{\chi_{\Lambda_x^n}}(\Phi(\chi_{\Lambda_a^n}))\in \cl B(H)$ and $F_{y,b}^n=\frac{1}{|Y_{n-1}|}L_{\chi_{\Lambda_y^n}}(\Phi(\chi_{\Lambda_b^n}))\in \cl B(K)$. Then
$$\langle (E_{x,a}^n\otimes F_{y,b}^n)\xi,\xi\rangle=f_n(e_{x,a}\otimes e_{y,b})=p_n(a,b|x,y),$$
for all $x\in \bb{X}$, $y\in \bb{Y}$, $a\in \bb{A}$ and 
$b\in \bb{B}$.
Moreover, $(E_{x,a}^n)_{a\in\mathbb A}$ and $(F_{y,b}^n)_{b\in\mathbb B}$ are families of POVM's for each $x\in\mathbb X$ and $y\in\mathbb Y$. Choose subnets $(E_{x,a}^{n_\alpha})_{\alpha}$, $(F_{y,b}^{n_\alpha})_{\alpha}$  converging to
 $E_{x,a}$ and $F_{y,b}$, respectively, in the weak* topology.  We have 
\begin{equation}\label{eq_Epntop}
\langle (E_{x,a}\otimes F_{y,b})\xi,\xi\rangle=\lim_{\alpha}\langle (E_{x,a}^{n_\alpha}\otimes F_{y,b}^{n_\alpha})\xi,\xi\rangle
= \lim_{n\to\infty} p_n(a,b|x,y) = p(a,b|x,y).
\end{equation}
 As the families $(E_{x,a})_{a\in\mathbb A}$, $(F_{y,b})_{b\in\mathbb B}$, $x\in\mathbb X$, $y\in\mathbb Y$ are again POVM's, identity (\ref{eq_Epntop}) 
 contradicts the fact that 
 $p$ is not of quantum spatial type. 
\end{proof}

\begin{corollary}\label{c_clofqs}
There exist inductive families of finite sets
$X$, $Y$, $A$ and $B$ for which
the set $\cl C_{\rm qs}(X,Y,A,B)$ is not closed in the 
BW topology.
\end{corollary}

\begin{proof}
    Let $(\Gamma_n)_{n\in\mathbb N}$ be the inductive family from Theorem \ref{th_qs} and let $\tilde\Gamma_n$ be the associated no-signalling correlations via Lemma \ref{l_ampliqc}. By Lemma \ref{l_ampliqc}, $\tilde \Gamma_n\in \cl C_{\rm qs}$. As $\tilde \Gamma_n\to \Gamma$ in the BW topology, the statement follows from Theorem \ref{th_qs}. 
\end{proof}

\noindent {\bf Remark.} 
Since the elements of the form 
$(\gamma^{(n)}_{X,A}\otimes \gamma^{(n)}_{Y,B})(e_{x,a} \otimes e_{y,b})$, where 
$x\in X_n, y\in Y_n$, $a\in A_n$ and $b\in B_n$, $n\in \bb{N}$, 
generate a dense subspace of the operator system $\cl S_{X,A}\otimes_{\rm c} \cl S_{Y,B}$, 
the correspondence between 
$\cl C_{\rm qc}(X,Y,A,B)$ (resp. $\cl C_{\rm qa}(X,Y,A,B)$) 
and the state space of 
$\cl S_{X,A}\otimes_{\rm c} \cl S_{Y,B}$ (resp. $\cl S_{X,A}\otimes_{\min} \cl S_{Y,B}$) 
from Theorem \ref{th_qc} is bijective.


\section{Cantor games}\label{s_Cantorg}

In this section, we define values of non-local games over Cantor sets, based on the correlation types studied in the previous sections, and establish continuity results thereof. 
We recall that, if $S$, $T$, $U$ and $V$ are finite sets, a non-local game
over the quadruple $(S,T,U,V)$ is a pair $G = (\lambda,\mu)$, where 
$\lambda : S\times T\times U\times V\to \{0,1\}$ and $\mu$ is a 
probability measure on $S\times T$; here, $S$ (resp.\,$U$) is interpreted as 
the set of inputs (resp.\,outputs) for player Alice, and 
$T$ (resp.\,$V$) -- as 
the set of inputs (resp.\,outputs) for player Bob. 
Alice and Bob play collaboratively against a third party, Verifier. 
In each round, the Verifier choses a pair $(s,t)\in S\times T$ of questions according 
to the probability measure $\mu$, 
and the players return a pair $(u,v)\in U\times V$; 
the tandem Alice-Bob wins (resp.\,loses) the round if $\lambda(s,t,u,v) = 1$
(resp.\,$\lambda(s,t,u,v) = 0$). 
Given a correlation type ${\rm t}$ over $(S,T,U,V)$, the \emph{{\rm t}-value}
of $G$ is the parameter
$$\omega_{\rm t}(\lambda,\mu) = \sup_{p\in \cl C_{\rm t}}
\sum_{s\in S}\sum_{t\in T}\mu(s,t) \sum_{u\in U}\sum_{v\in V}
\lambda(s,t,u,v) p(u,v|s,t)$$
(we note that $\omega_{\rm qs}(\lambda,\mu) = \omega_{\rm qa}(\lambda,\mu)$).

Measurable games and their values were defined in \cite{btt}; 
here, we specialise those to the case of Cantor topological spaces. 
For an inductive family $X$ of finite sets, we let for brevity $\frak{B}_X = \frak{B}_{\Omega_X}$. 
Let $X = (X_n)_{n\in \bb{N}}$, $Y = (Y_n)_{n\in \bb{N}}$, 
$A = (A_n)_{n\in \bb{N}}$ and $B = (B_n)_{n\in \bb{N}}$ be inductive families of finite sets. 
A \emph{Cantor game} is a pair $(\kappa,\mu)$, where 
$\kappa\subseteq \Omega_X\times \Omega_Y\times \Omega_A\times\Omega_B$ is a closed subset and $\mu$ is a probability measure on $\Omega_X\times \Omega_Y$. 
In the sequel, we consider only the case where $\mu$ is the uniform probability 
measure, that is, $\mu = \mu_X\times\mu_Y$. 

We equip $X_n\times Y_n$ with the uniform probability measure, 
and hence identify a non-local game $\cl G_n$ over $(X_n,Y_n,A_n,B_n)$
with its rule function $\lambda_n : X_n\times Y_n \times A_n\times B_n\to \{0,1\}$,
$n\in \bb{N}$. 
Let, further $\kappa_{\cl{G}_n}$ be the subset 
of $\Omega_X\times \Omega_Y\times \Omega_A\times\Omega_B$,
given by 
\begin{align}\label{eq_kappaGn}
    \kappa_{\cl{G}_n} = \{((x_k)_k),((y_k)_k),&((a_k)_k),((b_k)_k) : \\
&\left((x_k)_{k=0}^{n-1},(y_k)_{k=0}^{n-1},
(a_k)_{k=0}^{n-1},(b_k)_{k=0}^{n-1}\right)\in\supp\lambda_n\},\nonumber
\end{align} 
and note that $\kappa_{\cl{G}_n}$ is (open and) closed. 
We say that the family $(\cl G_n)_{n\in \bb{N}}$ is \emph{nested}
if $\kappa_{\cl{G}_{n+1}}\subseteq \kappa_{\cl{G}_n}$ for every $n$. 
For a nested family $\cl G = (\cl G_n)_{n\in \bb{N}}$ of games,
we set 
$\kappa_\cl{G} := \cap_{n\in\bb{N}}\kappa_{\cl{G}_n}$, 
and note that $\kappa_\cl{G}$
is a closed subset of $\Omega_X\times \Omega_Y\times \Omega_A\times\Omega_B$. 
We call the Cantor games of the latter form \emph{nested}.

\begin{lemma}
Every Cantor game $\kappa\subseteq \Omega_X\times \Omega_Y\times \Omega_A\times\Omega_B$ is nested.
\end{lemma}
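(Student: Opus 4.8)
The plan is to realise the given closed set $\kappa$ as the decreasing intersection of the clopen cylinders obtained by projecting $\kappa$ onto finitely many coordinates. For each $n\in\bb N$, write
$\pi_n : \Omega_X\times\Omega_Y\times\Omega_A\times\Omega_B \to X_n\times Y_n\times A_n\times B_n$
for the projection onto the first $n$ coordinate blocks (i.e.\ coordinates $0,\dots,n-1$, matching the truncation convention of (\ref{eq_kappaGn})). I would then define the finite game $\cl G_n$ over $(X_n,Y_n,A_n,B_n)$, equipped as usual with the uniform measure on $X_n\times Y_n$, by declaring its rule function $\lambda_n$ to be the indicator of the image $\pi_n(\kappa)$, so that $\supp\lambda_n = \pi_n(\kappa)$. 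This is a legitimate $\{0,1\}$-valued rule function on the finite quadruple. With this choice, $\kappa_{\cl G_n}$ as in (\ref{eq_kappaGn}) is exactly the full preimage $\pi_n^{-1}(\pi_n(\kappa))$, which is a clopen set containing $\kappa$.

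First I would check that the family $(\cl G_n)_{n\in\bb N}$ is nested. Writing $p_{n+1,n}$ for the coordinate projection $X_{n+1}\times Y_{n+1}\times A_{n+1}\times B_{n+1}\to X_n\times Y_n\times A_n\times B_n$ that forgets the $(n+1)$-st block, one has $\pi_n = p_{n+1,n}\circ\pi_{n+1}$, and hence $\pi_n(\kappa) = p_{n+1,n}(\pi_{n+1}(\kappa))$. Consequently any point whose first $n+1$ blocks coincide with those of some point of $\kappa$ already has its first $n$ blocks coinciding with (the same) point of $\kappa$; that is, $\kappa_{\cl G_{n+1}} = \pi_{n+1}^{-1}(\pi_{n+1}(\kappa)) \subseteq \pi_n^{-1}(\pi_n(\kappa)) = \kappa_{\cl G_n}$, so the family is nested.

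The core of the argument is then to verify $\bigcap_{n\in\bb N}\kappa_{\cl G_n} = \kappa$. The inclusion $\kappa\subseteq\bigcap_n\kappa_{\cl G_n}$ is immediate, since $\kappa\subseteq\pi_n^{-1}(\pi_n(\kappa))$ for every $n$. For the reverse inclusion I would take $z\in\bigcap_n\kappa_{\cl G_n}$; by definition of $\kappa_{\cl G_n}$, for each $n$ there is a point $w^{(n)}\in\kappa$ whose first $n$ coordinate blocks coincide with those of $z$. Since agreement on more and more coordinate blocks is precisely convergence in the product (Cantor) topology, $w^{(n)}\to z$, and as $\kappa$ is closed we conclude $z\in\kappa$. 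This establishes $\kappa = \kappa_{\cl G} = \bigcap_n\kappa_{\cl G_n}$ for the nested family constructed above, so $\kappa$ is nested.

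I do not expect a genuine obstacle here: the statement is essentially the standard fact that a closed subset of a Cantor space is the nested intersection of the clopen cylinders over its finite-coordinate projections, and the compact metrizable zero-dimensional structure supplies exactly the clopen cylinder basis needed. The only points deserving a line of care are that $\supp\lambda_n=\pi_n(\kappa)$ indeed defines a valid rule function, and that the indexing in (\ref{eq_kappaGn}) (first $n$ blocks indexed $0,\dots,n-1$) is aligned with the projections $\pi_n$, so that $\kappa_{\cl G_n}$ really is the cylinder $\pi_n^{-1}(\pi_n(\kappa))$.
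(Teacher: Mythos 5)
Your proposal is correct and follows essentially the same route as the paper: both set $\kappa_n = \pi_n^{-1}(\pi_n(\kappa))$, observe the nestedness of these clopen cylinders, and use closedness of $\kappa$ to show $\bigcap_n \kappa_n = \kappa$. Your final step is even slightly cleaner: you note directly that the approximating points $w^{(n)}\in\kappa$ converge to $z$ in the product topology, whereas the paper passes through a cluster point of the sequence and then identifies it with $z$ via agreement of projections — the two arguments are equivalent, and your explicit check that $\lambda_n = \chi_{\pi_n(\kappa)}$ is a legitimate rule function is a detail the paper leaves implicit.
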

\begin{proof}
Let $\pi_n : \Omega_X\times \Omega_Y\times \Omega_A\times\Omega_B\to 
X_n\times Y_n\times A_n\times B_n$ be the projection, and 
set $\kappa_n = \pi_n^{-1}(\pi_n(\kappa))$, $n\in \bb{N}$.
Clearly, $\kappa_{n+1}\subseteq \kappa_n$ for every $n$, and 
$\kappa\subseteq \cap_{n=1}^{\infty}\kappa_n$. 
Assuming that $\omega\in \cap_{n=1}^{\infty}\kappa_n$,
let $\omega_n\in X_n\times Y_n\times A_n\times B_n$ and 
$\omega_n'\in \prod_{i \geq n} [d_i^X]\times [d_i^Y]\times [d_i^A]\times [d_i^B]$ be such 
that $\omega_n\in \pi_n(\kappa)$ and $\omega = \omega_n\omega_n'$, $n\in \bb{N}$. 
Since $\omega_n\in \pi_n(\kappa)$, there exists 
$\omega_n''\in \prod_{i \geq n} [d_i^X]\times [d_i^Y]\times [d_i^A]\times [d_i^B]$, such that 
$\omega^{(n)} := \omega_n\omega_n'' \in \kappa$. 
Let $\omega'$ be a cluster point of the sequence $(\omega^{(n)})_{n\in \bb{N}}$; 
since $\kappa$ is closed, $\omega'\in \kappa$. 
Since $\pi_n(\omega) = \pi_n(\omega')$ for infinitely many $n\in \bb{N}$, 
we have that $\omega = \omega'$, implying that $\omega\in \kappa$.
\end{proof}

In the sequel, we write for brevity $\mu_{XY} = \mu_X\times \mu_Y$.  
By \cite[Theorem 3.11]{gb}, every correlation 
$\Gamma : C(\Omega_A){\otimes}C(\Omega_B)\rightarrow L^\infty(\Omega_X)\bar{\otimes}L^\infty(\Omega_Y)$
gives rise to a (unique) classical information channel 
$p_\Gamma\in\frak{C}_{\mu_{XY}}(\Omega_A\times \Omega_B,\Omega_X\times \Omega_Y;\bb{C})$,
viewed as a $\Omega_X\times\Omega_Y$-measurable family 
$p_{\Gamma} = (p_{\Gamma}(\cdot,\cdot|x,y)_{x,y})_{(x,y)\in \Omega_X\times\Omega_Y}$ of 
Borel probability measures over $\Omega_A\times\Omega_B$, 
such that 
\begin{align}\label{eq_dual_map}
    \Gamma(f)(x,y)=\int_{\Omega_A\times \Omega_B}f(a,b)dp_\Gamma(a,b|x,y), \ \ \ 
    \mu_{XY}\text{-almost everywhere,}
\end{align}
for every $f\in C(\Omega_A)\otimes C(\Omega_B)$. 
We have that
the map $\Gamma$ extends uniquely to the space of all bounded Borel functions on $\Omega_A\times\Omega_B$ and satisfies
\begin{align}\label{eq_char}
\Gamma(\chi_\delta)(x,y)=p_\Gamma(\delta|x,y),\ \delta\in\frak{B}_{A}\otimes \frak{B}_{B}, \ \  
\mu_X\times \mu_Y\mbox{-almost everywhere}.
\end{align}
Indeed, using Stinespring's Theorem and \cite[Theorem 2.6.3]{arveson_short_course}, the map $\Gamma$ can be extended uniquely to the space of bounded measurable functions on $\Omega_A\times\Omega_B$ in such a way that it has the following property: for every uniformly bounded sequence of measurable functions $(f_n)_{n\in\mathbb N}$ which converges pointwise to zero, the sequence $(\Gamma(f_n))_{n\in\mathbb N}$ converges strongly to zero. In particular, taking a uniformly bounded sequence of continuous functions $(f_n)_{n\in\mathbb N}$ converging pointwise to $\chi_{\delta}$, $\delta\in\frak{B}_{A}\otimes \frak{B}_{B}$, for $\xi,\eta\in L^2(X\times Y)$ we obtain
\begin{eqnarray*}
\langle\Gamma(\chi_\delta)\xi,\eta\rangle
& = & 
\lim_{n\to\infty}\langle\Gamma(f_n)\xi,\eta\rangle\\
& = & 
\lim_{n\to\infty}\iint f_n(a,b)dp_\Gamma(a,b|x,y)
\xi(x,y)\overline{\eta(x,y)}d\mu_{XY}(x,y)\\
& =& 
\langle p_\Gamma(\delta|\cdot,\cdot)\xi,\eta\rangle,
\end{eqnarray*} 
giving
$\Gamma(\chi_\delta)(x,y)=p_\Gamma(\delta|x,y)$  $\mu_{XY}$-almost everywhere.

If, further, $\pi$ is a Borel probability measure on 
$\Omega_X\times\Omega_Y$, let 
$\pi\otimes p_{\Gamma}$ be the \emph{compound measure} of $\pi$ and $p_{\Gamma}$, 
that is, the Borel probability measure on 
$\Omega_X\times\Omega_Y\times \Omega_A\times\Omega_B$, given by 
\begin{equation}\label{eq_piotimep}
(\pi\otimes p_{\Gamma})(M) = \int_X p_{\Gamma}(M_{x,y} | x,y)d\pi(x,y), \ \ \ 
M\in\frak{B}_{X}\otimes\frak{B}_Y\otimes\frak{B}_A\otimes\frak{B}_B,
\end{equation}
where 
$$M_{x,y} := \{(a,b)\in \Omega_A\times\Omega_B : 
(x,y,a,b)\in M\}$$ 
is the $(x,y)$-section of $M$ 
(see \cite{kakihara} and \cite{btt}).
For ${\rm t}\in\{\rm loc,qs,qc,ns\}$, let 
the \emph{$\rm t$-value} $\omega_{\rm t}(\kappa,\mu_{XY})$ of 
$\kappa$ with respect to the measure $\mu_{XY}$ be 
given by
$$\omega_{\rm t}(\kappa,\mu_{XY}) = \sup_{\Gamma\in \cl{C}_{\rm t}} (\mu_{XY}\otimes p_\Gamma)(\kappa).$$ 
We note that
if $\kappa_1$ and $\kappa_2$ are Cantor games with 
$\kappa_1\subseteq \kappa_2$ then 
\begin{equation}\label{eq_monoto}
\omega_{\rm t}(\kappa_1,\mu_{XY}) \leq \omega_{\rm t}(\kappa_2,\mu_{XY}), 
\ \ \ {\rm t}\in \{{\rm loc}, {\rm qs}, {\rm qc},{\rm ns}\}.
\end{equation}

\begin{remark}\label{r_mu_meas}
    \rm The setup of no-signalling correlations in this paper is established using almost everywhere defined operator-valued information channels. Since we are about to exploit the framework developed in \cite{btt}, we note that the compound measure $\mu_{XY}\otimes p_{\Gamma}$ is independent of the use of $\mu_{XY}$-information channels  as opposed to everywhere defined information channels. We refer the reader to 
\cite[Remark 6.1]{btt} for a detailed argument.
\end{remark}


\begin{lemma}\label{l_altexpGxy}
Let $X$, $Y$, $A$ and $B$ be inductive families of finite sets, $\kappa$ be a Cantor game over $(X,Y,A,B)$ 
and $\Gamma, \Gamma_n \in \cl C_{\rm ns}(X,Y,A,B)$, $n\in \bb{N}$, be such that $\Gamma_n\to_{n\to \infty}\Gamma$ in the BW topology. If a subset
$\kappa \subseteq \Omega_X\times\Omega_Y\times \Omega_A\times \Omega_B$ is closed and open then
$$(\mu_{XY}\otimes p_{\Gamma_n})(\kappa) \to_{n\to\infty} 
(\mu_{XY}\otimes p_{\Gamma})(\kappa).$$
\end{lemma}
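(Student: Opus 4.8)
The plan is to use the clopen structure of $\kappa$ to rewrite each compound-measure value as a \emph{finite} sum of pairings to which BW convergence applies termwise.

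First I would record that a clopen subset of the second countable Cantor space $\Omega_X\times\Omega_Y\times\Omega_A\times\Omega_B$ depends on only finitely many coordinates. Concretely, there is an $N\in\bb{N}$ such that, writing $\pi_N$ for the coordinate projection onto $X_N\times Y_N\times A_N\times B_N$ and $S:=\pi_N(\kappa)$, one has $\kappa=\pi_N^{-1}(S)$ and hence
\[
\chi_\kappa=\sum_{(x,y,a,b)\in S}\chi_{\tilde{x}}\otimes\chi_{\tilde{y}}\otimes\chi_{\tilde{a}}\otimes\chi_{\tilde{b}}.
\]
Each factor $\chi_{\tilde{a}}\otimes\chi_{\tilde{b}}$ then lies in $C(\Omega_A)\otimes C(\Omega_B)$ (cylinder indicators are continuous), and each $\chi_{\tilde{x}}\otimes\chi_{\tilde{y}}$ lies in the predual $L^1(\Omega_X)\hat{\otimes}L^1(\Omega_Y)$ of $L^\infty(\Omega_X)\bar{\otimes}L^\infty(\Omega_Y)$.

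Next I would evaluate the compound measure using its definition (\ref{eq_piotimep}) together with (\ref{eq_char}). For $(x,y)$ lying in the cell $\tilde{x}_0\times\tilde{y}_0$ (with $(x_0,y_0)\in X_N\times Y_N$), the section $\kappa_{x,y}$ is the cylinder $\bigcup\tilde{a}\times\tilde{b}$ over those $(a,b)$ with $(x_0,y_0,a,b)\in S$, so (\ref{eq_char}) gives $p_\Gamma(\kappa_{x,y}|x,y)=\sum\Gamma(\chi_{\tilde{a}}\otimes\chi_{\tilde{b}})(x,y)$. Integrating against $\mu_{XY}$ over each cell and regrouping the cylinders yields the closed form
\[
(\mu_{XY}\otimes p_\Gamma)(\kappa)=\sum_{(x,y,a,b)\in S}\big\langle \chi_{\tilde{x}}\otimes\chi_{\tilde{y}},\,\Gamma(\chi_{\tilde{a}}\otimes\chi_{\tilde{b}})\big\rangle,
\]
and the verbatim identity with $\Gamma$ replaced by each $\Gamma_n$.

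Finally, since each summand is the pairing of the fixed predual vector $\chi_{\tilde{x}}\otimes\chi_{\tilde{y}}$ against $\Gamma_\bullet$ evaluated at the fixed continuous function $\chi_{\tilde{a}}\otimes\chi_{\tilde{b}}$, the hypothesis $\Gamma_n\to\Gamma$ in the BW (point--weak*) topology forces $\langle\chi_{\tilde{x}}\otimes\chi_{\tilde{y}},\Gamma_n(\chi_{\tilde{a}}\otimes\chi_{\tilde{b}})\rangle\to\langle\chi_{\tilde{x}}\otimes\chi_{\tilde{y}},\Gamma(\chi_{\tilde{a}}\otimes\chi_{\tilde{b}})\rangle$ for each of the finitely many $(x,y,a,b)\in S$; summing finitely many convergent terms gives the claim. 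I expect the only delicate step to be the second one --- justifying, via (\ref{eq_char}) and the definition of the compound measure, the passage to the finite cylinder sum; the clopen hypothesis is exactly what keeps this sum finite, so that no uniformity in $n$ is required (in contrast to a general closed $\kappa$, where only one-sided semicontinuity would be available).
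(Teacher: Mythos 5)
Your proposal is correct and follows essentially the same route as the paper: decompose the clopen set $\kappa$ into a finite sum of product cylinders, rewrite $(\mu_{XY}\otimes p_{\Gamma})(\kappa)$ via (\ref{eq_piotimep}) and (\ref{eq_char}) as a finite sum of pairings $\langle \chi_{\tilde{x}}\otimes\chi_{\tilde{y}},\Gamma(\chi_{\tilde{a}}\otimes\chi_{\tilde{b}})\rangle$, and apply BW convergence termwise. You merely spell out (correctly) the details the paper leaves as ``immediate,'' including the observation that a clopen subset of the Cantor product depends on finitely many coordinates, which is what makes the sum finite.
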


\begin{proof}
If $f\in C(\Omega_X\times\Omega_Y)$ and $g\in C(\Omega_A\times\Omega_B)$ then
\begin{eqnarray*}
\langle\mu_{XY}\otimes p_\Gamma, f\otimes g\rangle &=& \int_{\Omega_X\times\Omega_Y\times\Omega_A\times\Omega_B}f(x,y)g(a,b)d(\mu_{XY}\otimes p_\Gamma)(x,y,a,b)\nonumber\\&=&\int_{\Omega_X\times\Omega_Y}\left(\int_{\Omega_A\times\Omega_B}g(a,b)dp_\Gamma(a,b|x,y)\right)f(x,y)d\mu_{XY}(x,y)\\&=&\int_{\Omega_X\times\Omega_Y}\Gamma(g)(x,y)f(x,y)d\mu_{XY}(x,y).\nonumber
\end{eqnarray*}
If $\kappa$ is closed and open then $\chi_{\kappa}$ is the 
finite sum of functions of the form 
$\chi_{\alpha}\otimes\chi_{\beta}$, where 
$\alpha\subseteq \Omega_{X\times Y}$ and 
$\beta\subseteq \Omega_{A\times B}$. The claim is now immediate. 
\end{proof}

For a closed and open subset $\kappa\subseteq \Omega_X\times\Omega_Y\times\Omega_A\times\Omega_B$, 
we write 
$\Gamma(\kappa) : \Omega_X\times\Omega_Y\to \bb{C}$
for the (measurable) function, given by 
$\Gamma(\kappa)(x,y) = \Gamma(\chi_{\kappa_{x,y}})(x,y)$,
and note that the proof of Lemma \ref{l_altexpGxy}
shows that 
$$\langle\mu_{XY}\otimes p_\Gamma, \chi_{\kappa}\rangle
= \int_{\Omega_X\times\Omega_Y}
\Gamma(\kappa)(x,y)d\mu_{XY}(x,y).
$$

\begin{lemma}\label{l_value_emb}
    Fix $n\in \bb{N}$ and let 
    $\cl{G}_n=(X_n,Y_n,A_n,B_n,\lambda_n)$ be a non-local game.
    If $\Gamma_n\in\cl{C}_{\rm t}$ and 
    $\tilde{\Gamma}_n =(\iota_{X,\infty}^{(n)}\otimes\iota_{Y,\infty}^{(n)})\circ\Gamma_n\circ(\cl{E}_{A_n}^\infty\otimes\cl{E}_{B_n}^\infty)|_{C(\Omega_A)\otimes C(\Omega_B)}$
    then 
 $$    (\mu_{X_nY_n}\otimes p_{\Gamma_n})(\supp\lambda_n)=(\mu_{XY}\otimes p_{\tilde{\Gamma}_n})(\kappa_n);$$
 consequently,
 $\omega_{\rm t}(\cl{G}_n,\mu_{X_n,Y_n})=\omega_{\rm t}(\kappa_n,\mu_{XY})$, $\rm t\in\{\rm loc, qs, qc, ns\}$.
\end{lemma}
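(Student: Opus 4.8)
The plan is to expand the Cantor-level winning probability over the finitely many cylinders indexed by $\supp\lambda_n$, match it termwise with the finite-level expression, and then promote the resulting numerical identity to an identity of values by invoking the type-preservation results already in hand. Throughout, $\kappa_n$ denotes the clopen cylinder $\kappa_{\cl G_n}=\pi_n^{-1}(\supp\lambda_n)$ of (\ref{eq_kappaGn}). Since the cylinders $\tilde x\times\tilde y\times\tilde a\times\tilde b$ over $(x,y,a,b)\in\supp\lambda_n$ are pairwise disjoint and cover $\kappa_n$, I would first record that
\[
\chi_{\kappa_n}=\sum_{(x,y,a,b)\in\supp\lambda_n}\chi_{\tilde x\times\tilde y}\otimes\chi_{\tilde a\times\tilde b},
\]
a finite sum of products of continuous cylinder indicators.

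Applying the identity established in the proof of Lemma \ref{l_altexpGxy} to each summand gives, for any $\Gamma\in\cl C_{\rm ns}(X,Y,A,B)$,
\[
(\mu_{XY}\otimes p_\Gamma)(\kappa_n)=\sum_{(x,y,a,b)\in\supp\lambda_n}\int_{\Omega_X\times\Omega_Y}\Gamma(\chi_{\tilde a}\otimes\chi_{\tilde b})\,(\chi_{\tilde x}\otimes\chi_{\tilde y})\,d\mu_{XY}.
\]
The next step is to recognise each integral as a finite trace pairing. Using $\iota_A^{(n)}(\delta_a)=\chi_{\tilde a}$ and the isometry $\iota_{X,1}^{(n)}(\delta_x)=\chi_{\tilde x}$ (legitimate because $\mu_X(\tilde x)=\tr_{X_n}(\delta_x)=1/|X_n|$), the integral is the $L^1$--$L^\infty$ pairing $\langle\chi_{\tilde x}\otimes\chi_{\tilde y},\Gamma(\chi_{\tilde a}\otimes\chi_{\tilde b})\rangle$; as $\cl E_{X_n}^\infty$ is the dual of the isometry $\iota_{X,1}^{(n)}$, it equals $\langle\delta_x\otimes\delta_y,\Gamma_n(\delta_a\otimes\delta_b)\rangle$, where $\Gamma_n=(\cl E_{X_n}^\infty\otimes\cl E_{Y_n}^\infty)\circ\Gamma\circ(\iota_A^{(n)}\otimes\iota_B^{(n)})$ is the finite correlation associated with $\Gamma$. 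Since $p_{\Gamma_n}(a,b|x,y)=|X_n||Y_n|\langle\delta_x\otimes\delta_y,\Gamma_n(\delta_a\otimes\delta_b)\rangle$ and $\mu_{X_nY_n}$ is uniform, summing produces the master identity
\[
(\mu_{XY}\otimes p_\Gamma)(\kappa_n)=(\mu_{X_nY_n}\otimes p_{\Gamma_n})(\supp\lambda_n),
\]
valid for every such $\Gamma$ with associated finite $\Gamma_n$. The first assertion of the lemma is then the special case $\Gamma=\tilde\Gamma_n$, since $\cl E_{X_n}^\infty\circ\iota_{X,\infty}^{(n)}=\id$ and $\cl E_{A_n}^\infty\circ\iota_A^{(n)}=\id$ force the finite correlation associated with $\tilde\Gamma_n$ to be exactly $\Gamma_n$.

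For the equality of values I would read off both inequalities from this master identity. The bound $\omega_{\rm t}(\cl G_n,\mu_{X_n,Y_n})\le\omega_{\rm t}(\kappa_n,\mu_{XY})$ follows by taking an arbitrary $\Gamma_n\in\cl C_{\rm t}(X_n,Y_n,A_n,B_n)$, amplifying it to $\tilde\Gamma_n\in\cl C_{\rm t}(X,Y,A,B)$ by Lemma \ref{l_ampliqc}, and using the first assertion. The reverse bound follows by taking an arbitrary $\Gamma\in\cl C_{\rm t}(X,Y,A,B)$, passing to its associated finite correlation $\Gamma_n$, which lies in $\cl C_{\rm t}(X_n,Y_n,A_n,B_n)$ by the type-preservation statements of Proposition \ref{p_gammans}, Theorem \ref{th_n_qc}, Corollary \ref{r_qacase} and Remark \ref{r_locqson}, and then invoking the master identity. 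I expect the only real difficulty to be bookkeeping: keeping the normalisation constants $1/|X_n|$, the normalised-trace pairings, and the several avatars $\chi_{\tilde x}$, $\iota_{X,1}^{(n)}(\delta_x)$, $\iota_{X,\infty}^{(n)}(\delta_x)$ of $\delta_x$ mutually consistent, so that the duality between $\cl E_{X_n}^\infty$ and $\iota_{X,1}^{(n)}$ delivers precisely the factor converting cylinder measures into uniform finite probabilities.
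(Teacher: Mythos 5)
Your proposal is correct and follows essentially the same route as the paper's proof: the same $L^1$--$L^\infty$ duality between $\iota_{X,1}^{(n)}$ and $\cl E_{X_n}^{\infty}$ to convert the Cantor-level pairing into the finite trace pairing, Lemma \ref{l_ampliqc} for the inequality $\omega_{\rm t}(\cl G_n,\mu_{X_nY_n})\leq\omega_{\rm t}(\kappa_n,\mu_{XY})$, and the type-preservation results for the reverse. The only differences are organisational -- you decompose $\chi_{\kappa_n}$ into cylinder indicators and sum termwise where the paper pairs directly with $1_{XY}$, and you merge the paper's two parallel computations into one master identity by noting that $\cl E_{X_n}^{\infty}\circ\iota_{X,\infty}^{(n)}=\id$ forces the finite correlation associated with $\tilde\Gamma_n$ to be $\Gamma_n$ -- which is a harmless (indeed slightly tidier) repackaging of the same argument.
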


\begin{proof}
Letting $G_n = \supp\lambda_n$ and
$\kappa_n$ be defined as in (\ref{eq_kappaGn}),  
using the definition of the compound measure and (\ref{eq_char}) we have 
that 
\begin{eqnarray*}
(\mu_{XY}\otimes p_{\tilde{\Gamma}_n})(\kappa_n)
& = & 
\langle 1_{XY},\tilde{\Gamma}_{n}({\kappa_n})\rangle\\
& = & \langle1_{XY},(\iota_{X,\infty}^{(n)}\otimes\iota_{Y,\infty}^{(n)})\circ\Gamma_n\circ(\cl{E}_{A_n}^\infty\otimes\cl{E}_{B_n}^\infty)(\kappa_n)\rangle\\
&= &
\langle(\cl{E}_{X_n}\otimes\cl{E}_{Y_n})(1_{XY}),\Gamma_n\circ(\cl{E}_{A_n}^\infty\otimes\cl{E}_{B_n}^\infty)(\kappa_n)\rangle\\
& = &
\langle1_{X_nY_n},\Gamma_{n}(G_n)\rangle
=(\mu_{X_nY_n}\otimes p_{\Gamma_n})(G_n).
 \end{eqnarray*}
 Now, since $\Gamma_n\in\cl{C}_{\rm t}(X_n,Y_n,A_n,B_n)\implies \tilde{\Gamma}_n\in\cl{C}_{\rm t}(X,Y,A,B)$,
 by Lemma \ref{l_ampliqc} we obtain $$\omega_{\rm t}(\cl{G}_n,\mu_{X_n,Y_n})\leq\omega_{\rm t}(\kappa_n,\mu_{XY}).$$
 Next let $\Gamma\in\cl{C}_{\rm t}(X,Y,A,B)$,  set 
 $$\Gamma_n = \left(\cl{E}^\infty_{X_n}\otimes \cl{E}^\infty_{Y_n}\right)
 \circ\Gamma\circ \left(\iota_A^{(n)}\otimes \iota_B^{(n)}\right), \ \ n\in\bb{N},$$
 and note that $(\mu_{X_nY_n}\otimes p_{\Gamma_n})(G_n)=(\mu_{XY}\otimes p_\Gamma)(\kappa_n)$. Indeed, 
 \begin{align*}
     (\mu_{X_nY_n}\otimes p_{\Gamma_n})(G_n) &= \langle 1_{X_nY_n}, \Gamma_n(G_n)\rangle\\
     & =\langle 1_{X_nY_n}, (\cl{E}^\infty_{X_n}\otimes \cl{E}^\infty_{Y_n})\circ\Gamma\circ(\iota_A^{(n)}\otimes \iota_B^{(n)})(G_n)\\
     & =\langle \cl{E}_{X_n}\otimes \cl{E}_{Y_n})(1_{XY}), (\cl{E}^\infty_{X_n}\otimes \cl{E}^\infty_{Y_n})\circ\Gamma(\kappa_n)\rangle\\
     & = \langle 1_{XY}, \Gamma(\kappa_n)\rangle=(\mu_{XY}\otimes p_\Gamma)(\kappa_n). 
 \end{align*}
 Now since $\Gamma\in\cl{C}_{\rm t}(X,Y,A,B)\implies \Gamma_n\in\cl{C}_{\rm t}(X_n,Y_n,A_n,B_n)$
 (see Theorems \ref{th_ns} and \ref{th_n_qc}, and Remark \ref{r_locqson}), 
 one obtains $\omega_{\rm t}(\cl{G}_n,\mu_{X_nY_n})=\omega_{\rm t}(\kappa_n,\mu_{XY}).$ 
 \end{proof}

\begin{theorem}\label{th_valnegaat}
    Let $\cl G=(\cl{G}_n)_{n\in\bb{N}}$ be a nested family, where $\cl G_n$ is a 
    non-local game over $(X_n,Y_n,A_n,B_n)$, $n\in\bb{N}$.
   If ${\rm t}\in \{\rm loc,qs,qc,ns\}$ then
    $$\lim_{n\to\infty} \omega_{\rm t}(\cl{G}_n,\mu_{X_nY_n})
    = \inf_{n\in \bb{N}} \omega_{\rm t}(\cl{G}_n,\mu_{X_nY_n})
    = \omega_{\rm t}(\cl{G},\mu_{XY}).$$
\end{theorem}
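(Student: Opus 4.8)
The plan is to reduce the statement, via Lemma \ref{l_value_emb}, to a monotone‑convergence statement about the clopen games $\kappa_n := \kappa_{\cl G_n}$ and their intersection $\kappa := \kappa_{\cl G} = \cap_n \kappa_n$. First I would record that, by Lemma \ref{l_value_emb}, $\omega_{\rm t}(\cl G_n,\mu_{X_nY_n}) = \omega_{\rm t}(\kappa_n,\mu_{XY})$ for every $n$ and every ${\rm t}\in\{{\rm loc},{\rm qs},{\rm qc},{\rm ns}\}$, so it suffices to prove $\lim_n \omega_{\rm t}(\kappa_n,\mu_{XY}) = \inf_n \omega_{\rm t}(\kappa_n,\mu_{XY}) = \omega_{\rm t}(\kappa,\mu_{XY})$. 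Since the family is nested, $\kappa_{n+1}\subseteq\kappa_n$, and the monotonicity (\ref{eq_monoto}) shows the sequence $(\omega_{\rm t}(\kappa_n,\mu_{XY}))_n$ is non‑increasing; hence its limit equals its infimum $L$, giving the first equality. The inclusion $\kappa\subseteq\kappa_n$ together with (\ref{eq_monoto}) yields the easy inequality $\omega_{\rm t}(\kappa,\mu_{XY})\le L$.

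The substance is the reverse inequality $\omega_{\rm t}(\kappa,\mu_{XY})\ge L$, which I would obtain by a compactness argument. For each $n$ choose $\Gamma^{(n)}\in\cl C_{\rm t}$ with $(\mu_{XY}\otimes p_{\Gamma^{(n)}})(\kappa_n)\ge \omega_{\rm t}(\kappa_n,\mu_{XY})-1/n\ge L-1/n$. Since the ambient set of unital completely positive maps is BW‑compact and metrisable, I would pass to a BW‑convergent subsequence $\Gamma^{(n_k)}\to\Gamma$. For each fixed $m$, the clopen set $\kappa_m$ makes $\Gamma'\mapsto(\mu_{XY}\otimes p_{\Gamma'})(\kappa_m)$ BW‑continuous (as in the computation of Lemma \ref{l_altexpGxy}, since $\chi_{\kappa_m}$ is a finite sum of elementary tensors of characteristic functions); hence for $n_k\ge m$, using $\kappa_{n_k}\subseteq\kappa_m$, one gets $(\mu_{XY}\otimes p_{\Gamma})(\kappa_m)=\lim_k(\mu_{XY}\otimes p_{\Gamma^{(n_k)}})(\kappa_m)\ge\lim_k(\mu_{XY}\otimes p_{\Gamma^{(n_k)}})(\kappa_{n_k})\ge L$. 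Finally, continuity from above of the probability measure $\mu_{XY}\otimes p_{\Gamma}$ along $\kappa_m\downarrow\kappa$ gives $(\mu_{XY}\otimes p_{\Gamma})(\kappa)=\inf_m(\mu_{XY}\otimes p_{\Gamma})(\kappa_m)\ge L$.

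It remains to ensure that the limit correlation $\Gamma$ (or an equally good one) lies in the relevant class $\cl C_{\rm t}$, so that $\omega_{\rm t}(\kappa,\mu_{XY})\ge(\mu_{XY}\otimes p_{\Gamma})(\kappa)\ge L$. For ${\rm t}={\rm ns}$ this is immediate, as $\cl C_{\rm ns}$ is cut out by BW‑closed conditions and is therefore BW‑compact; for ${\rm t}={\rm qc}$ it follows from Corollary \ref{c_clofqc}. For ${\rm t}={\rm loc}$, the limit lies a priori only in the BW‑closed convex hull of the (BW‑compact) set of product correlations; here I would invoke the Bauer maximum principle—the value functional is affine and BW‑upper semicontinuous on this compact convex set—to move the maximiser to an extreme point, which by Milman's theorem is a product correlation and hence genuinely local.

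The main obstacle is the quantum spatial case, precisely because $\cl C_{\rm qs}$ is not BW‑closed (Corollary \ref{c_clofqs}): the limit $\Gamma$ produced above generally lands in $\cl C_{\rm qa}=\overline{\cl C_{\rm qs}}^{\rm BW}\setminus\cl C_{\rm qs}$, and since $\Gamma'\mapsto(\mu_{XY}\otimes p_{\Gamma'})(\kappa)$ is only upper semicontinuous on the closed set $\kappa$, approximating $\Gamma$ by quantum spatial correlations need not preserve the value. I would resolve this by passing to the approximately quantum value: the argument above, run inside the BW‑closed class $\cl C_{\rm qa}$, yields $\omega_{\rm qa}(\kappa,\mu_{XY})=L$, and one then uses the coincidence $\omega_{\rm qs}=\omega_{\rm qa}$ (valid at each finite level, where the qs‑ and qa‑values of the clopen games $\kappa_n$ agree, and hence for the Cantor game) to conclude $\omega_{\rm qs}(\kappa,\mu_{XY})=L$ as well. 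Establishing this last coincidence cleanly, against the upper semicontinuity just noted, is the delicate point of the proof.
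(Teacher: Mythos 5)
Your main line coincides with the paper's own proof: reduce via Lemma~\ref{l_value_emb} to the clopen sets $\kappa_n$, use nestedness and (\ref{eq_monoto}) to get monotonicity and the easy inequality $\omega_{\rm t}(\kappa,\mu_{XY})\leq L$, then take a BW-limit $\Gamma$ of near-optimisers, combine BW-continuity of $\Gamma'\mapsto(\mu_{XY}\otimes p_{\Gamma'})(\kappa_m)$ on clopen sets (Lemma~\ref{l_altexpGxy}) with $\kappa_{n_k}\subseteq\kappa_m$, and finish by continuity from above along $\kappa_m\downarrow\kappa$. This is exactly the paper's argument (the paper phrases the near-optimisers through finite games and the lifts of Lemma~\ref{l_ampliqc}, but that is cosmetic). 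Where you go beyond the paper is in auditing the membership of the limit $\Gamma$ in $\cl C_{\rm t}$: for ${\rm ns}$ and ${\rm qc}$ your closedness appeal is what the paper implicitly uses (Corollary~\ref{c_clofqc}), and your Bauer--Milman device for ${\rm t}={\rm loc}$ is a correct supplement to a point the paper leaves silent --- the functional $\Gamma\mapsto(\mu_{XY}\otimes p_\Gamma)(\kappa)$ is genuinely affine (because $\Gamma\mapsto\mu_{XY}\otimes p_\Gamma$ is affine) and BW-upper semicontinuous, the product correlations form a BW-compact set, so the maximum over the closed convex hull is attained at a product, which is local.

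The genuine gap is ${\rm t}={\rm qs}$, and you concede it yourself. The clause ``$\omega_{\rm qs}=\omega_{\rm qa}$, valid at each finite level, and hence for the Cantor game'' is not an inference: once $\omega_{\rm qa}(\kappa,\mu_{XY})=L$ is established and $L=\inf_n\omega_{\rm qs}(\kappa_n,\mu_{XY})$, the Cantor-level coincidence $\omega_{\rm qs}(\kappa,\mu_{XY})=\omega_{\rm qa}(\kappa,\mu_{XY})$ is \emph{equivalent} to the ${\rm qs}$ case of the theorem, so invoking it is circular. Moreover, nothing you have proved excludes a strict drop: $f(\Gamma)=(\mu_{XY}\otimes p_\Gamma)(\kappa)=\inf_n f_n(\Gamma)$ is only upper semicontinuous, $\cl C_{\rm qs}$ is BW-dense in but strictly smaller than $\cl C_{\rm qa}$ (Theorem~\ref{th_qs}, Corollary~\ref{c_clofqs}), and the supremum of an upper semicontinuous function over a dense subset of a compact set can be strictly smaller than over the whole set (on $[0,1]$, with $f_n(x)=x^n$ and $f=\inf_n f_n$, one has $\sup_{[0,1)}f=0<1=f(1)$); the finite-level equalities give only $\sup_{\cl C_{\rm qs}}f_n=\sup_{\cl C_{\rm qa}}f_n$ for each fixed $n$, which does not pass to the infimum over $n$. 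What is still needed is, for every $\varepsilon>0$, a correlation lying in $\cl C_{\rm qs}$ itself with $f$-value at least $L-\varepsilon$; standard repairs (lifting a level-$n$ near-optimiser fills the tail with uniform noise and degrades $f_m$ for $m>n$; countable convex combinations dilute the value) do not supply it. It is worth noting that the paper's proof runs the same compactness argument uniformly for all four types and implicitly places the limit $\Gamma$ in $\cl C_{\rm t}$, which is justified for ${\rm ns}$ and ${\rm qc}$ by closedness but is exactly the issue for ${\rm qs}$ --- so you have correctly isolated the delicate point rather than missed an idea present in the paper, but your proposal, as it stands, does not close it.
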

\begin{proof}
    Let $\kappa=\kappa_{\cl{G}}$ and 
    $\kappa_n = \pi_n^{-1}(\supp\lambda_n)$.  We have $\kappa=\cap_{n\in\mathbb N}\kappa_n$. Fix $\Gamma\in\cl{C}_{\rm t}$ and note that for any $m\geq n$, by monotonicity of the measures as $(\kappa_n)_{n\in\mathbb N}$ is a decreasing sequence of sets, one has 
    \begin{align*}\label{eq_41ga}
        (\mu_{XY}\otimes p_\Gamma)(\kappa_m)\leq (\mu_{XY}\otimes p_{\Gamma})(\kappa_n).
    \end{align*}
    By taking supremum over $\Gamma\in\cl{C}_{\rm t}$ we
    obtain that 
$\omega_{\rm t}(\kappa_m,\mu_{XY}) \leq
\omega_{\rm t}(\kappa_n,\mu_{XY})$.
By Lemma \ref{l_value_emb}, 
$$    \omega_{\rm t}(\cl{G}_m,\mu_{X_mY_m})\leq\omega_{\rm t}(\cl{G}_n,\mu_{X_nY_n}).$$
Again by Lemma \ref{l_value_emb},
 $$   \omega_{\rm t}(\cl{G},\mu_{XY})\leq\inf_{n\in\bb{N}}\omega_{\rm t}(\cl{G}_n,\mu_{X_nY_n}).$$

For the converse, fix $\epsilon>0$, and let $\Gamma_m\in\cl{C}_{\rm t}$ such that  

\begin{equation} \label{eq_appromeg}
    \omega_{\rm t}(\cl{G}_m,\mu_{X_mY_m})-(\mu_{XY}\otimes p_{\Gamma_m})(\kappa_m)<\frac{\epsilon}{m},
\end{equation}
and assume, without loss of generality, that 
$(\tilde{\Gamma}_m)_{m \in \bb N}$ converges to $\Gamma$ in the BW topology. By monotonicity of the measures again, 
$$(\mu_{XY}\otimes p_{\tilde{\Gamma}_m})(\kappa_m)\leq(\mu_{XY}\otimes p_{\tilde{\Gamma}_m})(\kappa_n) \ \ \mbox{ for all } m\geq n,$$
and thus, using the convergence of 
$\tilde{\Gamma}_m$ to $\Gamma$, Lemma \ref{l_altexpGxy}, Lemma \ref{l_value_emb} and (\ref{eq_appromeg}), we get
\begin{align}\label{rel_nes}
    \inf_{m\in \bb{N}}\omega_{\rm t} (\kappa_m,\mu_{XY})\leq (\mu_{XY}\otimes p_{\Gamma})(\kappa_n).
\end{align}
By the monotonicity of measure and the fact that 
$\kappa=\cap_{n\in\bb{N}}\kappa_n$, we obtain 
\begin{align}\label{rel_cts_ab}
    (\mu_{XY}\otimes p_\Gamma)(\kappa)=\lim_{n\in\bb{N}}(\mu_{XY}\otimes p_\Gamma)(\kappa_n).
\end{align}
Finally, combining relations (\ref{rel_nes}) and (\ref{rel_cts_ab}) one obtains the desired result.
\end{proof}

\bigskip

Our next aim is to provide tensor norm descriptions of the quantum spatial and the 
quantum commuting value of a Cantor game. 
Suppose first that $G = (\lambda,\mu)$ is a non-local game over a 
quadruple $(S,T,U,V)$ of finite sets. The element 
$$t_G := \sum_{(x,y)\in S\times T} 
\sum_{(a,b)\in U\times V}
\mu(x,y) 
\lambda(x,y,a,b) e_{x,a}\otimes e_{y,b}$$
of the (algebraic) tensor product $\cl S_{S,U}\otimes \cl S_{T,V}$ is 
usually referred to as 
the \emph{full game tensor} of $G$. 
It can easily be seen from Theorem \ref{th_orcha} that
\begin{equation}\label{eq_tensexp}
\omega_{\rm qs}(\lambda,\mu) = \|t_G\|_{\min}, \ \ 
\omega_{\rm qc}(\lambda,\mu) = \|t_G\|_{\rm c} 
\ \mbox{ and } \ 
\omega_{\rm ns}(\lambda,\mu) = \|t_G\|_{\max}.
\end{equation}

Assume that $\cl G = (\cl{G}_n)_{n\in\bb{N}}$ is a nested family, where $\cl G_n$ is a 
non-local game over $(X_n,Y_n,A_n,B_n)$ with rule function $\lambda_n$, $n\in\bb{N}$. 
Letting $\kappa_n = \kappa_{\cl G_n}$, write 
$$t_{\cl G}^{(n)} = \frac{1}{|X_n||Y_n|}
\sum_{(x,y)\in X_n\times Y_n} 
\sum_{(a,b)\in A_n\times B_n}
\lambda_n(x,y,a,b) \chi_{\tilde{x}\times\tilde{a}} \otimes \chi_{\tilde{y}\times\tilde{b}},$$
considered as an element of $\cl S_{X,A}\otimes \cl S_{Y,B}$. 

\begin{lemma}\label{l_monoten}
Let $\cl G = (\cl{G}_n)_{n\in\bb{N}}$ be a nested family of games. 
\begin{itemize}
\item[(i)] If ${\tau}\in \{{\min}, {\rm c}, \max\}$ 
then
$t_{\cl G}^{(n+1)}\leq t_{\cl G}^{(n)}$ in 
$\cl S_{X,A}\otimes_{\tau} \cl S_{Y,B}$, $n\in \bb{N}$.

\item[(ii)] The limit $\lim_{n\to \infty} t_{\cl G}^{(n)}$ exists in the 
weak* topology of $\cl S_{X,A}^{**}\bar{\otimes} \cl S_{Y,B}^{**}$. 
\end{itemize}
\end{lemma}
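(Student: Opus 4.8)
The plan is to establish (i) by reducing the monotonicity to a single identity at finite level, where the nestedness hypothesis enters, and then to deduce (ii) from that monotonicity by a soft weak*-compactness argument in the dual operator system. Throughout I will identify, as in the paragraph following Remark \ref{r_BWhomeom}, the generator $\gamma^{(n)}_{X,A}(e_{x,a})$ with $\chi_{\tilde{x}\times\tilde{a}}$, so that
\[
t_{\cl G}^{(n)}=\bigl(\gamma^{(n)}_{X,A}\otimes\gamma^{(n)}_{Y,B}\bigr)(t_{\cl G_n}),
\qquad
t_{\cl G_n}:=\frac{1}{|X_n||Y_n|}\sum_{x,y,a,b}\lambda_n(x,y,a,b)\,e_{x,a}\otimes e_{y,b}\in\cl S_{X_n,A_n}\otimes\cl S_{Y_n,B_n}.
\]
Since $\gamma^{(n)}_{X,A}=\gamma^{(n+1)}_{X,A}\circ\gamma_{X_n,A_n}$ (and likewise for $Y,B$), and since each tensor product $\tau\in\{\min,{\rm c},\max\}$ is functorial, so that $\gamma^{(n+1)}_{X,A}\otimes\gamma^{(n+1)}_{Y,B}$ is unital completely positive, it suffices to prove that the difference
\[
\bigl(\gamma_{X_n,A_n}\otimes\gamma_{Y_n,B_n}\bigr)(t_{\cl G_n})-t_{\cl G_{n+1}}
\]
is positive in $\cl S_{X_{n+1},A_{n+1}}\otimes_\tau\cl S_{Y_{n+1},B_{n+1}}$, because applying the positive map $\gamma^{(n+1)}_{X,A}\otimes\gamma^{(n+1)}_{Y,B}$ then yields $t_{\cl G}^{(n)}-t_{\cl G}^{(n+1)}\ge 0$.

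The heart of (i) is a direct computation. Using $\gamma_{X_n,A_n}(e_{x,a})=\frac{1}{d_n^X}\sum_{p,q}e_{(x,p),(a,q)}$ together with the analogous formula on the $Y,B$ side, and noting that $|X_n||Y_n|\,d_n^X d_n^Y=|X_{n+1}||Y_{n+1}|$, I would collect terms to obtain
\[
\bigl(\gamma_{X_n,A_n}\otimes\gamma_{Y_n,B_n}\bigr)(t_{\cl G_n})-t_{\cl G_{n+1}}
=\frac{1}{|X_{n+1}||Y_{n+1}|}\sum_{x',y',a',b'}
\bigl(\lambda_n(\bar{x'},\bar{y'},\bar{a'},\bar{b'})-\lambda_{n+1}(x',y',a',b')\bigr)\,e_{x',a'}\otimes e_{y',b'},
\]
where the sum is over $X_{n+1}\times Y_{n+1}\times A_{n+1}\times B_{n+1}$ and $\bar{\,\cdot\,}$ denotes projection onto the level-$n$ coordinates. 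The nestedness hypothesis $\kappa_{\cl G_{n+1}}\subseteq\kappa_{\cl G_n}$ translates, through the definition \eqref{eq_kappaGn} of $\kappa_{\cl G_n}$, into the pointwise inequality $\lambda_{n+1}(x',y',a',b')\le\lambda_n(\bar{x'},\bar{y'},\bar{a'},\bar{b'})$ on the $\{0,1\}$-valued rule functions; hence every coefficient above is nonnegative. Each generator $e_{x',a'}$, $e_{y',b'}$ is positive, and an elementary tensor of two positive elements lies in the maximal cone, hence is positive in every $\otimes_\tau$; a nonnegative combination of such tensors is therefore positive in $\otimes_\tau$. This proves the required positivity, and the same observation shows $t_{\cl G}^{(n)}\ge 0$ for every $n$.

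For (ii), I would use that $\cl M:=\cl S_{X,A}^{**}\bar{\otimes}\cl S_{Y,B}^{**}$ is a dual operator system, with predual $\cl N=\cl S_{X,A}^{*}\hat{\otimes}\cl S_{Y,B}^{*}$, carrying the weak* topology $\sigma(\cl M,\cl N)$. By (i) the sequence $(t_{\cl G}^{(n)})_n$ is decreasing, and since each term is positive with $t_{\cl G}^{(n)}\le t_{\cl G}^{(1)}$, it is norm-bounded. For every positive functional $\varphi\in\cl N$ the scalar sequence $\varphi(t_{\cl G}^{(n)})$ is decreasing and bounded below by $0$, hence convergent. Since the positive functionals span a norm-dense subspace of $\cl N$ and $(t_{\cl G}^{(n)})_n$ is uniformly bounded, $\varphi(t_{\cl G}^{(n)})$ converges for every $\varphi\in\cl N$. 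The assignment $\varphi\mapsto\lim_n\varphi(t_{\cl G}^{(n)})$ is then bounded by Banach--Steinhaus, so defines an element $t\in\cl M$, and by construction $t_{\cl G}^{(n)}\to t$ in the weak* topology.

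The main obstacle I anticipate is the bookkeeping in (i): correctly matching the averaging factors $1/d_n^X,1/d_n^Y$ produced by $\gamma_{X_n,A_n},\gamma_{Y_n,B_n}$ against the normalisations $1/(|X_n||Y_n|)$ and $1/(|X_{n+1}||Y_{n+1}|)$, and, above all, converting the set-theoretic inclusion $\kappa_{\cl G_{n+1}}\subseteq\kappa_{\cl G_n}$ into the pointwise domination $\lambda_{n+1}\le\lambda_n\circ(\text{projection})$; once this is in place the positivity is immediate from the cone structure of the tensor products. In (ii) the only delicate point is to justify that the weak*-continuous positive functionals span a dense part of the predual $\cl N$, which is what makes the monotone scalar convergence propagate to all of $\cl N$; this follows from the decomposition of self-adjoint normal functionals into positive ones, but is the step I would treat with the most care.
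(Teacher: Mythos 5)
Your proof is correct and follows essentially the same route as the paper's: in (i) your coefficient identity $\lambda_n(\bar{x'},\bar{y'},\bar{a'},\bar{b'})-\lambda_{n+1}(x',y',a',b')\ge 0$ is exactly the paper's set-difference expression $\frac{1}{|X_{n+1}||Y_{n+1}|}\sum_{(x,y,a,b)\in G_{n+1}'\setminus G_{n+1}}\chi_{\tilde{x}\times\tilde{a}}\otimes\chi_{\tilde{y}\times\tilde{b}}$, with the same normalisation bookkeeping $|X_n||Y_n|d_n^Xd_n^Y=|X_{n+1}||Y_{n+1}|$ and the same cone inclusions $(\cl S_{X,A}\otimes_{\max}\cl S_{Y,B})^+\subseteq(\cl S_{X,A}\otimes_{\rm c}\cl S_{Y,B})^+\subseteq(\cl S_{X,A}\otimes_{\min}\cl S_{Y,B})^+$. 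For (ii) the paper simply declares the weak* convergence of a bounded decreasing positive sequence ``immediate,'' and your predual pairing argument (which could be streamlined by noting that products of positive functionals already span a dense subspace of the predual) is just a spelled-out version of that standard fact.
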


\begin{proof}
Write $G_n = \supp(\lambda_n)$ for brevity.

(i) 
Fix $n\in \bb{N}$. 
Since the family $\cl G$ is nested, 
$$\supp (\lambda_{n+1}) \subseteq \supp(\lambda_n) \times 
\left([d_n^X]\times
[d_n^Y]\times [d_n^A]\times [d_n^B]\right).
$$
Letting 
$$
\hspace{-1.5cm}
G_{n+1}' = \{(xx',yy',aa',bb') : (x,y,a,b)\in G_n, $$
$$\hspace{3.5cm} (x',y',a',b')\in [d_n^X]\times
[d_n^Y]\times [d_n^A]\times [d_n^B]\},$$
we have 
\begin{eqnarray*} 
&& 
\frac{1}{|X_{n}||Y_{n}|}\sum_{(x,y,a,b)\in G_n} 
\chi_{\tilde{x}\times\tilde{a}} \otimes \chi_{\tilde{y}\times\tilde{b}} 
\hspace{0.1cm} \\
& &
\hspace{3cm}
- \frac{1}{|X_{n+1}||Y_{n+1}|} \hspace{-0.1cm}
\sum_{(x,y,a,b)\in G_{n+1}} 
\chi_{\tilde{x}\times\tilde{a}} \otimes \chi_{\tilde{y}\times\tilde{b}}\\
& = & 
\frac{1}{|X_{n}||Y_{n}|} \cdot \frac{1}{d_{n}^{X} d_{n}^{Y}} \sum_{(x,y,a,b)\in G_{n+1}'} 
\chi_{\tilde{x}\times\tilde{a}} \otimes \chi_{\tilde{y}\times\tilde{b}} 
\hspace{0.1cm} \\
& & \hspace{3cm} - \frac{1}{|X_{n+1}||Y_{n+1}|} \hspace{-0.1cm}
\sum_{(x,y,a,b)\in G_{n+1}} 
\chi_{\tilde{x}\times\tilde{a}} \otimes \chi_{\tilde{y}\times\tilde{b}}\\
& = & 
\frac{1}{|X_{n+1}||Y_{n+1}|} \sum_{(x,y,a,b)\in G_{n+1}'\setminus G_{n+1}} 
\chi_{\tilde{x}\times\tilde{a}} \otimes \chi_{\tilde{y}\times\tilde{b}}.
\end{eqnarray*}
Since $\chi_{\tilde{x}\times\tilde{a}}\in \cl S_{X,A}^+$ and 
$\chi_{\tilde{y}\times\tilde{b}}\in \cl S_{Y,B}^+$, we have that 
$$\sum_{(x,y,a,b)\in G_{n+1}'\setminus G_{n+1}} 
\chi_{\tilde{x}\times\tilde{a}} \otimes \chi_{\tilde{y}\times\tilde{b}}
\in (\cl S_{X,A}\otimes_{\max} \cl S_{Y,B})^+,$$
and hence 
$$t_{\cl G}^{(n)} - t_{\cl G}^{(n+1)} 
\geq \frac{1}{|X_{n+1}||Y_{n+1}|} \sum_{(x,y,a,b)\in G_{n+1}'\setminus G_{n+1}} 
\chi_{\tilde{x}\times\tilde{a}} \otimes \chi_{\tilde{y}\times\tilde{b}}\geq 0$$
in $\cl S_{X,A}\otimes_{\max} \cl S_{Y,B}$. 
The rest of the conclusions follow from the fact that 
$$(\cl S_{X,A}\otimes_{\max} \cl S_{Y,B})^+\subseteq 
(\cl S_{X,A}\otimes_{\rm c} \cl S_{Y,B})^+\subseteq
(\cl S_{X,A}\otimes_{\min} \cl S_{Y,B})^+.$$

(ii) 
Consider $t_{\cl G}^{(n)}$ as an element of $\cl S_{X,A}^{**}\bar{\otimes} \cl S_{Y,B}^{**}$, $n\in \bb{N}$. By (i), the sequence  
$(t_{\cl G}^{(n)})_{n\in \bb{N}}$ is monotone decreasing and bounded from below 
(by the zero element). 
The conclusion is now immediate. 
\end{proof}

In view of Lemma \ref{l_monoten}, set 
$t_{\cl G} := {\rm w}^*\mbox{-}\lim_{n\to \infty} t_{\cl G}^{(n)}$,
considered as an element 
of $\cl S_{X,A}^{**}\bar{\otimes} \cl S_{Y,B}^{**}$. 

\begin{theorem}\label{th_tenformu}
Let $(\cl{G}_n)_{n\in\bb{N}}$ be a nested family, where $\cl G_n$ is a 
non-local game over $(X_n,Y_n,A_n,B_n)$, $n\in\bb{N}$. 
Then 
\begin{itemize}
\item[(i)] $\omega_{\rm ns}(\cl{G},\mu_{XY}) = \lim_{n\to \infty} \|t_{\cl G}^{(n)}\|_{\max}$;

\item[(ii)] 
$\omega_{\rm qc}(\cl{G},\mu_{XY}) = \lim_{n\to \infty} \|t_{\cl G}^{(n)}\|_{\rm c}$, and 

\item[(iii)] 
$\omega_{\rm qs}(\cl{G},\mu_{XY}) = 
\lim_{n\to \infty} \|t_{\cl G}^{(n)}\|_{\min}\geq 
\|t_{\cl G}\|_{\min}$.
\end{itemize}
\end{theorem}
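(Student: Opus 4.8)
The plan is to reduce all three identities to a single norm computation at each finite level $n$ and then quote the convergence result of Theorem \ref{th_valnegaat}. The starting observation is that, for every $n$ and every $\tau\in\{\min,{\rm c},\max\}$, the element $t_{\cl G}^{(n)}$ is \emph{positive} in $\cl S_{X,A}\otimes_\tau\cl S_{Y,B}$, being a nonnegative combination of the positive elementary tensors $\chi_{\tilde x\times\tilde a}\otimes\chi_{\tilde y\times\tilde b}$ (this positivity is already used in the proof of Lemma \ref{l_monoten}). Consequently $\|t_{\cl G}^{(n)}\|_\tau=\sup\{s(t_{\cl G}^{(n)}):s\text{ a state on }\cl S_{X,A}\otimes_\tau\cl S_{Y,B}\}$, so the whole problem is to identify this supremum with a game value.

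For the matched pairs $(\tau,{\rm t})\in\{(\max,{\rm ns}),({\rm c},{\rm qc}),(\min,{\rm qa})\}$ I would invoke the bijective state--correlation correspondence of Theorem \ref{th_ns} (for $\max$) and Theorem \ref{th_qc} (for ${\rm c}$ and $\min$): every state $s$ has the form $s_\Gamma$ for a unique $\Gamma\in\cl C_{\rm t}$, with $s_\Gamma(\chi_{\tilde x\times\tilde a}\otimes\chi_{\tilde y\times\tilde b})=|X_n||Y_n|\langle\chi_{\tilde x}\otimes\chi_{\tilde y},\Gamma(\chi_{\tilde a}\otimes\chi_{\tilde b})\rangle$. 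Summing over $(x,y,a,b)\in\supp\lambda_n$, the factor $|X_n||Y_n|$ cancels the normalisation in $t_{\cl G}^{(n)}$, giving $s_\Gamma(t_{\cl G}^{(n)})=\sum_{(x,y,a,b)\in\supp\lambda_n}\langle\chi_{\tilde x}\otimes\chi_{\tilde y},\Gamma(\chi_{\tilde a}\otimes\chi_{\tilde b})\rangle$. Using the clopen decomposition $\chi_{\kappa_n}=\sum_{(x,y,a,b)\in\supp\lambda_n}\chi_{\tilde x}\otimes\chi_{\tilde y}\otimes\chi_{\tilde a}\otimes\chi_{\tilde b}$, where $\kappa_n=\pi_n^{-1}(\supp\lambda_n)$, together with the pairing formula established in the proof of Lemma \ref{l_altexpGxy}, this sum is exactly $(\mu_{XY}\otimes p_\Gamma)(\kappa_n)$. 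Taking the supremum over $\Gamma\in\cl C_{\rm t}$ then yields $\|t_{\cl G}^{(n)}\|_\tau=\omega_{\rm t}(\kappa_n,\mu_{XY})$.

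By Lemma \ref{l_value_emb} we have $\omega_{\rm t}(\kappa_n,\mu_{XY})=\omega_{\rm t}(\cl G_n,\mu_{X_nY_n})$, so Theorem \ref{th_valnegaat} gives $\lim_n\|t_{\cl G}^{(n)}\|_\tau=\lim_n\omega_{\rm t}(\cl G_n,\mu_{X_nY_n})=\omega_{\rm t}(\cl G,\mu_{XY})$, which is precisely (i) (with $\tau=\max$), (ii) (with $\tau={\rm c}$) and the equality in (iii) (with $\tau=\min$). For (iii) I would add the small remark that, since $\kappa_n$ is clopen, the functional $\Gamma\mapsto(\mu_{XY}\otimes p_\Gamma)(\kappa_n)$ is BW-continuous (again by Lemma \ref{l_altexpGxy}); hence its suprema over $\cl C_{\rm qs}$ and over $\cl C_{\rm qa}=\overline{\cl C_{\rm qs}}^{\rm BW}$ agree, consistent with $\omega_{\rm qs}=\omega_{\rm qa}$, so the $\min$-norm matches $\omega_{\rm qs}(\kappa_n,\mu_{XY})$.

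For the final inequality $\lim_n\|t_{\cl G}^{(n)}\|_{\min}\ge\|t_{\cl G}\|_{\min}$ I would use Lemma \ref{l_monoten}: the sequence $(t_{\cl G}^{(n)})_n$ is positive, decreasing, and weak*-convergent to $t_{\cl G}$ in $\cl S_{X,A}^{**}\bar\otimes\cl S_{Y,B}^{**}$. Since the norm of a dual Banach space is weak*-lower semicontinuous, $\|t_{\cl G}\|\le\liminf_n\|t_{\cl G}^{(n)}\|_{\min}$, and the $\liminf$ is the limit because the norms decrease by Lemma \ref{l_monoten}(i). The main obstacle, and the one step deserving care, is the identity $s_\Gamma(t_{\cl G}^{(n)})=(\mu_{XY}\otimes p_\Gamma)(\kappa_n)$: one must correctly reconcile the two encodings of a single correlation — the state $s_\Gamma$ on the operator-system tensor product and the classical channel $p_\Gamma$ governing the compound measure — and check that slicing against the cylinder functions reproduces the clopen decomposition of $\kappa_n$ term by term. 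Once that identity is in place, the rest is bookkeeping built on the finite-level formula (\ref{eq_tensexp}) and the already-proved convergence in Theorem \ref{th_valnegaat}.
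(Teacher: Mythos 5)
Your proof is correct and takes essentially the same route as the paper, whose one-line proof invokes the finite-level tensor formula (\ref{eq_tensexp}) together with Theorem \ref{th_valnegaat}: your identification $\|t_{\cl G}^{(n)}\|_{\tau}=\omega_{\rm t}(\kappa_n,\mu_{XY})$ via positivity of $t_{\cl G}^{(n)}$ and the state--correlation bijections of Theorems \ref{th_ns} and \ref{th_qc}, followed by Lemma \ref{l_value_emb} and the convergence of Theorem \ref{th_valnegaat}, is precisely that reduction carried out at the level of the inductive-limit operator systems. Your extra details --- reconciling the $\min$/${\rm qa}$ correspondence with $\omega_{\rm qs}$ via BW-continuity of $\Gamma\mapsto(\mu_{XY}\otimes p_{\Gamma})(\kappa_n)$ on clopen sets, and the weak*-lower-semicontinuity argument for $\lim_n\|t_{\cl G}^{(n)}\|_{\min}\geq\|t_{\cl G}\|_{\min}$ --- correctly supply steps the paper leaves implicit.
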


\begin{proof}
The statements follow from (\ref{eq_tensexp}) and Theorem \ref{th_valnegaat}.
\end{proof}

\begin{example}[The IID case]\label{ex_IID}
\rm Let $X_0$, $Y_0$, $A_0$ and $B_0$ be finite sets, and 
$\lambda : X_0\times Y_0\times A_0\times B_0 \to \{0,1\}$ be a rule function of a game over the 
quadruple $(X_0,Y_0,A_0,B_0)$. Write $E = {\rm supp}(\lambda)$ and, letting 
$X_n = X_0^n$, $Y_n = Y_0^n$, $A_n = A_0^n$ and $B_n = B_0^n$, $n\in \bb{N}$, 
consider the game over $(X_n,Y_n,A_n,B_n)$, with rule function whose support is the set 
\begin{eqnarray*}
&&\hspace{-0.5cm} E_n = \{((x_i)_{i=1}^n,(y_i)_{i=1}^n,(a_i)_{i=1}^n,(b_i)_{i=1}^n) : 
(x_i,y_i,a_i,b_i)\in E \mbox{ for every } i\in [n]\}.
\end{eqnarray*}
Let $X = (X_n)_{n=1}^{\infty}$, $Y = (Y_n)_{n=1}^{\infty}$, $A = (A_n)_{n=1}^{\infty}$ and 
$B = (B_n)_{n=1}^{\infty}$ be the corresponding inductive sequences, and
embed $E_n$ in the first $n$ coordinates, 
yielding a set $\kappa_n\subseteq\Omega_X\times\Omega_Y\times \Omega_A\times\Omega_B$, 
$n\in \bb{N}$. 
The sequence $(\kappa_n)_{n=1}^{\infty}$ is nested and, if $\kappa = \cap_{n\in \bb{N}}\kappa_n$, then 
\begin{eqnarray*}
&&\hspace{-0.8cm} \kappa = \{((x_i)_{i=1}^\infty,(y_i)_{i=1}^\infty,(a_i)_{i=1}^\infty,(b_i)_{i=1}^\infty) : 
(x_i,y_i,a_i,b_i)\in E \mbox{ or } \mbox{ for every } i\in \bb{N}\}.
\end{eqnarray*}
The Cantor game $\kappa$ encodes the infinite parallel
repetition of the game $E$ (that is, the 
product of infinitely many copies of $E$).
\end{example}

\begin{example}[Markov type]\label{ex_Markov}
\rm 
We describe a class of examples of non-IID Cantor games, and therefore of games, to which Theorems 
\ref{th_valnegaat} and \ref{th_tenformu} apply. 
As in Example \ref{ex_IID}, let 
$X_0$, $Y_0$, $A_0$ and $B_0$ be finite sets, and 
$\lambda : X_0\times Y_0\times A_0\times B_0 \to \{0,1\}$ be a rule function of a game over the 
quadruple $(X_0,Y_0,A_0,B_0)$. Write $E = {\rm supp}(\lambda)$ and
consider the game over $(X_n,Y_n,A_n,B_n)$, with rule function whose support is the set 
\begin{eqnarray*}
&&\hspace{-0.5cm} E_n = \{((x_i)_{i=1}^n,(y_i)_{i=1}^n,(a_i)_{i=1}^n,(b_i)_{i=1}^n) : 
(x_i,y_i,a_i,b_i)\in E \mbox{ or }\\
&& \hspace{2.5cm}
(x_{i+1},y_{i+1},a_{i+1},b_{i+1})\in E,
\mbox{ for every } i\in [n-1]\}.
\end{eqnarray*}
Let $X = (X_n)_{n=1}^{\infty}$, $Y = (Y_n)_{n=1}^{\infty}$, $A = (A_n)_{n=1}^{\infty}$ and 
$B = (B_n)_{n=1}^{\infty}$ be the corresponding inductive sequences, and
embed $E_n$ in the first $n$ coordinates, 
yielding a set $\kappa_n\subseteq\Omega_X\times\Omega_Y\times \Omega_A\times\Omega_B$, 
$n\in \bb{N}$. 
A straightforward inspection shows that the sequence $(\kappa_n)_{n=1}^{\infty}$ is nested.
The corresponding Cantor game has a set $\kappa$ of admissible quadruples, given by 
\begin{eqnarray*}
&&\hspace{-0.8cm} \kappa = \{((x_i)_{i=1}^\infty,(y_i)_{i=1}^\infty,(a_i)_{i=1}^\infty,(b_i)_{i=1}^\infty) : 
(x_i,y_i,a_i,b_i)\in E \mbox{ or }\\
&& \hspace{2.7cm}
(x_{i+1},y_{i+1},a_{i+1},b_{i+1})\in E,
\mbox{ for every } i\in \bb{N}\};
\end{eqnarray*}
heuristically, this means that the rules of the Cantor 
game require that, in any two individual rounds of 
the underlying finite game, the players win at least 
once.
\end{example}



\end{document}